\theoremstyle{plain}
\newtheorem{theorem*}{Theorem}
\newtheorem{theorem}{Theorem}[section]
\newtheorem{lemma}[theorem]{Lemma}     
\newtheorem{corollary}[theorem]{Corollary}
\newtheorem{proposition}[theorem]{Proposition}
\theoremstyle{definition}
\newtheorem{definition}[theorem]{Definition}
\theoremstyle{remark}
\newtheorem{remark}[theorem]{Remark}
\DeclareMathOperator{\rk}{rk}
\DeclareMathOperator{\card}{card}
\DeclareMathOperator{\Ort}{Ort}
\DeclareMathOperator{\Span}{span}
\DeclareMathOperator{\Hom}{Hom}
\DeclareMathOperator{\height}{ht}
\DeclareMathOperator{\supp}{supp}
\DeclareMathOperator{\Stab}{Stab}
\DeclareMathOperator{\ad}{ad}
 \newcommand{\calF}{\mathcal F}
\newcommand{\calI}{\mathcal I}
 \newcommand{\calN}{\mathcal N}
\newcommand{\calO}{\mathcal O} 
\newcommand{\calX}{\mathcal X}
 \newcommand{\mN}{\mathbb N}
 \newcommand{\mQ}{\mathbb Q}
 \newcommand{\mZ}{\mathbb Z}
\newcommand{\goa}{\mathfrak a}
\newcommand{\goc}{\mathfrak c}
\newcommand{\gog}{\mathfrak g}
\newcommand{\gol}{\mathfrak l}
\newcommand{\goh}{\mathfrak h}
\newcommand{\gou}{\mathfrak u}
\newcommand{\gob}{\mathfrak b}
\newcommand{\gop}{\mathfrak p}
\newcommand{\got}{\mathfrak t}
\newcommand{\gra}{\alpha} \newcommand{\grb}{\beta}    \newcommand{\grg}{\gamma}
\newcommand{\grd}{\delta} \newcommand{\grl}{\lambda}  \newcommand{\grs}{\sigma}
\newcommand{\gre}{\varepsilon} \newcommand{\gro}{\omega}
 \newcommand{\grD}{\Delta}  \newcommand{\grL}{\Lambda}
\newcommand{\mk}  {\Bbbk}
\renewcommand{\setminus}      {\smallsetminus}
\renewcommand{\geq}      {\geqslant}
\renewcommand{\leq}      {\leqslant}
\newcommand{\ol}         {\overline}
\newcommand{\wt}         {\widetilde}
\newcommand{\wh}         {\widehat}
\title[Nilpotent orbits of height 2]{Nilpotent orbits of height 2\\
and involutions in the affine Weyl group}
\author[J. Gandini]{Jacopo Gandini}
\address{Dipartimento di Matematica, Universit\`a di Bologna, Piazza di Porta San Donato 5, 40126 Bologna, Italy}
\email{jacopo.gandini@unibo.it}
\author[P. M\"oseneder Frajria]{Pierluigi M\"oseneder Frajria}
\address{Politecnico di Milano, Polo regionale di Como, 
Via Valleggio 11, 22100 Como, Italy}
\email{pierluigi.moseneder@polimi.it}
\author[P. Papi]{Paolo Papi}
\address{Dipartimento di Matematica, Sapienza Universit\`a di Roma, P.le A. Moro 2, 00185 Roma, Italy}
\email{papi@mat.uniroma1.it}
\date{\today}
\begin{document}

\begin{abstract} Let $G$ be an almost simple group over an algebraically closed field $\mk$ of characteristic zero, let $\gog$ be its Lie algebra and  let $B \subset G$ be a Borel subgroup. Then $B$ acts with finitely many orbits on the variety $\calN_2 \subset \gog$ of the nilpotent elements whose height is at most 2. We provide a parametrization of the $B$-orbits in $\calN_2$ in terms of subsets of pairwise orthogonal roots, and we provide a complete description of the inclusion order among the $B$-orbit closures in terms of the Bruhat order on certain involutions in the affine Weyl group of $\gog$.
\end{abstract}

\maketitle

\section*{Introduction.}

Let $G$ be an almost simple group over an algebraically closed field $\mk$ of characteristic zero, let $\gog$ be its Lie algebra and let $\calN \subset \gog$ be the nilpotent cone. It is well known that $G$ acts with finitely many orbits on $\calN$: for instance, when $G$ is a classical group, the nilpotent $G$-orbits are parametrized in terms of partitions, and the partial order defined by the inclusions of their closures is nicely expressed in terms of the dominance order of partitions.

Given $e \in \calN$ a natural index of nilpotency is the \textit{height}, defined as
$$
	\height(e) = \max \{n \in \mN \; | \; \ad(e)^n \neq 0\}.
$$
The simplest nonzero nilpotent elements are those of height 2. If $G$ is a special linear or a symplectic group, these are precisely the nilpotent elements whose square is zero. More generally, if $G$ is a classical group and $e$ is a nonzero nilpotent element with  $e^2 = 0$ then  $e$ has height 2, however if $G$ is an orthogonal group and $\height(e) = 2$ it might also be $e^2 \neq 0$.

Beyond the height 2 elements, a nice class of nilpotent elements of small height is that of the \textit{spherical nilpotent elements}
$$
	\calN_{\mathrm{sph}} = \{e \in \calN \; | \; \height(e) \leq 3\}.
$$
The name that we used for these elements is related to  the following geo\-metrical characterization due to Panyushev (see \cite{Pa1} and \cite{Pa2}): for $e \in \calN$, $\height(e) \leq 3$ if and only if $Ge$ is a \textit{spherical variety}, namely every Borel subgroup of $G$ has a dense open orbit in $Ge$. More explicitly, the spherical nilpotent orbits (that is, the orbits of the spherical nilpotent elements) can be characterized as those admitting a representative which is a sum of root vectors corresponding to pairwise orthogonal simple roots (see \cite{Pa2}).

 As noticed in \cite{Pa1}, the height of a nilpotent element is always even when $G$ is a sphecial linear or a symplectic group: thus in these cases an element $e \in \calN$ is spherical if and only if $\height(e) \leq 2$, if and only if $e^2 = 0$.
 
As follows from a general theorem independently proved by Brion \cite{brion0} and Vinberg  \cite{vinberg}, the fact that $Ge$ is a spherical variety also implies that every Borel subgroup $B \subset G$ acts on the closure $\overline{Ge}$ with finitely many orbits. Since $\calN_{\mathrm{sph}}$ is itself the closure of a spherical nilpotent orbit, it follows that every Borel subgroup of $G$ acts on $\calN_{\mathrm{sph}}$ with finitely many orbits: it is therefore natural to study the $B$-orbits therein, together with the associated partial order induced by the inclusion of closures. This is the main object of the present paper, where we will restrict our attention to the height 2 nilpotent locus
$$
	\calN_2 = \{e \in \calN \; | \; \height(e) \leq 2\}.
$$

In order to be more precise in the description of the main results of the paper, we introduce some further notation. Fix  a Borel subgroup $B$ of $G$ and  a maximal torus $T$ of $B$. Denote by $\Phi$ the set of roots of $G$ with respect defined by $T$, let $\grD \subset \Phi$ be the set of simple roots defined by $B$ and let $W = N_G(T)/T$ be the associated Weyl group. Let $\gog = \got \oplus \bigoplus_{\gra \in \Phi} \gog_\gra$ be the root space decomposition of $\gog$, and fix a non-zero element $e_\gra \in \gog_\gra$ for all $\gra \in \Phi$.

Recall that two roots $\gra, \grb \in \Phi$ are called \textit{strongly orthogonal} if neither their sum nor their difference is a root (if $G$ is simply laced, this is equivalent to the fact that $\gra, \grb$ are orthogonal roots). More generally, we say that a set of roots $S \subset \Phi$ is \textit{strongly orthogonal} if its elements are pairwise strongly orthogonal. To any strongly orthogonal subset $S \subset \Phi$ we associate a nilpotent element by setting
$$
	e_S = \sum_{\gra \in S} e_\gra.
$$
Such an element is indeed nilpotent of height at most 4, and if moreover $G$ is a special linear or a symplectic group then $\height(e_S) = 2$ for all nonempty strongly orthogonal subset $S \subset \Phi$ (see Proposition \ref{prop:nilpotent} and Remark \ref{oss:strongly-orth-AC}).

Nilpotent elements of the previous shape are useful in order to study $B$-orbits of nilpotent elements of small height. If indeed $R,S \subset \Phi$ are strongly orthogonal, then we have $B e_S = B e_R$ if and only if $S = R$ (see Proposition \ref{prop:strongly-orth-uniqueness}). If one restricts the attention to the orthogonal subsets arising in $\calN_2$, the situation is even  better: indeed in this case
$$
	\calN_2 = \bigcup_{\height(e_S) \leq 2} B e_S.
$$

In order to study the $B$-orbits in $\calN_2$, it is therefore enough to study the orbits of the ``orthogonal nilpotent elements" of shape $e_S$.

To study the inclusion relations among the closures of the $B$-orbits of the elements $e_S$, we associate to any strongly orthogonal subset $S \subset \Phi$ an involution of the affine Weyl group $\wh W$. If $\wh \Phi$ is the affine root system attached to $\Phi$ and if $\grd \in \wh \Phi$ is the fundamental imaginary root, define
$$
	\wh S = \{\gra - \grd \; | \; \gra \in S\}.
$$

Endow $\wh W$ with the Bruhat order defined by the set of simple roots $\grD \cup \{\grd - \theta\} \subset \wh \Phi$ (where $\theta \in \Phi$ is the highest root defined by $\grD$) and let $\ell : \wh W \rightarrow \mN$ be the associated length function. If $\gra \in \wh \Phi_{\mathrm{re}}$, let $s_\gra \in \wh W$ be the corresponding reflection, and if $S \subset \wh \Phi_{\mathrm{re}}$ is a set of pairwise strongly orthogonal roots define
$\grs_S = \prod_{\gra \in S} s_\gra$.

The following is our main theorem.

\begin{theorem*}[see Corollary \ref{cor:dim-formula} and Theorem \ref{teo:bruhat3}] \label{teo:teorema1}
Let $R,S \subset \Phi$ be strongly orthogonal with $\height(e_R) = \height(e_S) = 2$, then $Be_R \subset \overline{Be_S}$ if and only if $\grs_{\wh R} \leq \grs_{\wh S}$. Moreover, we have
$$
	\dim(Be_S) = \frac{\ell(\grs_{\wh S}) + | S|}{2}.
$$
\end{theorem*}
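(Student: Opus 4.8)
The plan is to prove the two assertions essentially independently, mirroring the two cited statements. A guiding observation is that the target formula $\dim(Be_S) = (\ell(\grs_{\wh S}) + |S|)/2$ has exactly the shape of the Richardson--Springer dimension formula for an orbit indexed by an involution: the element $\grs_{\wh S} = \prod_{\gra\in S}s_{\gra-\grd}$ is an involution of $\wh W$, and since the affine roots $\gra-\grd$ ($\gra\in S$) are pairwise strongly orthogonal, its $(-1)$-eigenspace on the reflection representation is exactly their span, so its absolute length --- the least number of reflections needed to write it --- equals $|S|$. I would organize the entire argument around the combinatorics of such involutions in $\wh W$.

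For the dimension formula I would compute both sides and identify them. Geometrically, $\dim(Be_S) = \dim[\gob,e_S] = \dim\gob - \dim\gob^{e_S}$, and since $e_S=\sum_{\gra\in S}e_\gra$ with $S$ strongly orthogonal of height $2$, I would evaluate $\ad(e_S)|_\gob$ one root space at a time: for $\gamma\in\Phi^+\cup\{0\}$ the bracket $[\gog_\gamma,e_S]$ lies in $\bigoplus_{\gra\in S}\gog_{\gamma+\gra}$, and strong orthogonality prevents the contributions of distinct $\gra\in S$ from interfering in a common target root space, reducing the computation to a count of admissible roots. Combinatorially I would use the inversion description $\ell(w)=|\{\beta\in\wh\Phi^+ : w\beta\in\wh\Phi^-\}|$ together with the involution $\beta\mapsto -\grs_{\wh S}\beta$ of the inversion set: its two-element orbits account for an even number $2m$ of inversions, while its fixed points are exactly the $|S|$ roots $\gra-\grd$ themselves, whence $\ell(\grs_{\wh S})=2m+|S|$ and $\dim(Be_S)=m+|S|=(\ell(\grs_{\wh S})+|S|)/2$. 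The technical heart is a bijection between a basis of $[\gob,e_S]$ and the orbits of this inversion pairing.

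For the closure order I would first note that both sides are graded by $(\ell+|S|)/2$, so it suffices to match covering relations. Since $\calN_2$ is a closed $G$-stable subvariety of the closure of a spherical nilpotent orbit, hence spherical, results of Brion and Knop guarantee that the inclusion order on $B$-orbit closures is generated by codimension-one degenerations, each realized by a minimal parabolic $P_\gra$ with $\gra\in\grD$. I would determine how a single $P_\gra$ carries $Be_S$ to an adjacent orbit $Be_R$ and show this produces a Bruhat cover $\grs_{\wh R}\lessdot\grs_{\wh S}$, equivalently an elementary modification of $S$ into $R$. The decomposition $\grs_{\wh S}=t_{\rho_S^\vee}\grs_S$, with $\rho_S^\vee=\sum_{\gra\in S}\gra^\vee$ and $\grs_S=\prod_{\gra\in S}s_\gra\in W$, clarifies the picture: the finite reflection part $\grs_S$ is governed by the $P_\gra$-action, while the translation part records the height-$2$ grading. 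Transitivity of covers and of closure inclusions then yields the equivalence in general.

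The main obstacle I anticipate is precisely this matching, in two respects. First, one must explain geometrically the role of the affine simple reflection $s_{\grd-\theta}$, which does not arise from any minimal parabolic of $G$; this is where the genuinely affine nature of the combinatorics enters, and where I would expect to need the finer structure of height-$2$ elements and their link with ad-nilpotent ideals of $\gob$. Second, the elementary modifications $S\mapsto R$ of strongly orthogonal sets and the Bruhat covers of the associated involutions must be shown to correspond bijectively in every type --- including the non-simply-laced cases and the orthogonal groups, where $\height(e_S)=2$ can occur with $e_S^2\neq 0$ and the dictionary between roots and the classical partition data is most delicate.
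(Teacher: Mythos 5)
There are two genuine gaps, one in each half of your plan. For the dimension formula, the key combinatorial claim --- that the fixed points of the pairing $\beta\mapsto-\grs_{\wh S}\beta$ on the inversion set are exactly the $|S|$ roots coming from $\wh S$ --- is false outside the simply laced case. By Lemma \ref{lemma:radici-attive-reali}, every root of the form $\tfrac{1}{2}(\pm\grb\pm\grb')$ with $\grb,\grb'\in\wh S$ is sent to its negative by $\grs_{\wh S}$, and such half-sums and half-differences can be genuine roots in types $B$, $C$, $F_4$, $G_2$. Concretely, in type $C_2$ take $S=\{2\gre_1,2\gre_2\}$ (so $\height(e_S)=2$ and $Be_S$ is the open $B$-orbit in the $3$-dimensional abelian nilradical $\goa$): the inversion set of $\grs_{\wh S}$ consists of the four roots $\grd-2\gre_1$, $\grd-2\gre_2$, $\grd-\gre_1-\gre_2$, $\gre_1-\gre_2$, each of which is sent to its own negative, so $m=0$ and your identities $\ell(\grs_{\wh S})=2m+|S|$ and $\dim(Be_S)=m+|S|$ would give $2$ for both, whereas $\ell(\grs_{\wh S})=4$ and $\dim(Be_S)=3$. (The theorem itself survives: $(4+2)/2=3$.) The paper does not argue this way at all: the dimension formula is proved in Theorem \ref{teo:dimension} by induction on the length of the $W^P$-component of the admissible pair $(w,S)$ attached to $Be_S$ via the resolution $G\times_P\goa\to\overline{Ge}$, with the base case $\ell(w)=0$ imported from the abelian-ideal result of \cite{GMMP}.

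For the closure order, the assertion that results of Brion and Knop guarantee that the inclusion order on $B$-orbit closures of a spherical variety is generated by codimension-one degenerations realized by minimal parabolics is not a theorem: in general the full closure order is strictly finer than the weak order generated by the relations $\calO'\subset P_\gra\calO$, and no general principle reduces one to the other. The paper does use the minimal parabolics $P_\gra$ for the inductive step of Theorem \ref{teo:bruhat3} (via the descent analysis of Lemma \ref{lemma:discese-sotto}), but the induction is on $\ell(w)$ for the admissible pair, the base case (both $R$ and $S$ inside one abelian ideal $\Psi$) is taken from \cite{GMMP}, and the reduction to that base case needs Proposition \ref{prop:bruhat-involuzioni2} to force $R\subset\Psi$ from $\grs_{\wh R}\leq\grs_{\wh S}$ --- this is precisely where the affine simple reflection $\grd-\theta$ that you flag is dealt with, and your proposal offers no substitute. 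Finally, the implication $Be_R\subset\overline{Be_S}\Rightarrow\grs_{\wh R}\leq\grs_{\wh S}$ is not obtained in the paper by matching covers but by a separate argument (Proposition \ref{prop:bruhat-involuzioni}) mapping $\exp(Be_{\wh S})$ into the Bruhat cell $\wh B\grs_{\wh S}\wh B$ of the Kac--Moody group; your gradedness-plus-covers strategy presupposes both directions of exactly the correspondence it is meant to establish.
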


We point out that, if $R,S \subset \Phi$ are strongly orthogonal and $Be_R \subset \overline{Be_S}$, then the inequality $\grs_{\wh R} \leq \grs_{\wh S}$ holds without further assumptions on the height (see Proposition \ref{prop:bruhat-involuzioni}). On the other hand, without assumptions on the height, different strongly orthogonal subsets are not necessarily separated by the corresponding affine involutions (see Corollary \ref{cor:iniettivita-involuzioni} and \cite[Remark 4.4]{GMMP}).

The present paper stems as a continuation of \cite{GMMP}, where the case of the abelian ideals of the Lie algebra $\gob$ of $B$ is considered. If indeed $\goa$ is such an ideal, then by a result of Panyushev and R\"ohrle \cite{PR} it holds that  $\height(e) \leq 3$ for all $e \in \goa$, so that $\goa \subset \calN_{\mathrm{sph}}$. The set of the $B$-orbits on $\goa$ was considered by Panyushev in \cite{Pa3}, where these orbits are parametrized in terms of strongly orthogonal sets of roots. The affine Weyl group was then brought into the picture in \cite{GMMP}, where a statement analogous to that in Theorem \ref{teo:teorema1} is proved in the case of the abelian ideals of $\gob$. This case will be indeed a main step in the proof of Theorem \ref{teo:teorema1}.

In proving Theorem 1, we will also study the action of $B$ on some well known resolutions of singularities associated to the closure of a nilpotent $G$-orbit in $\calN_2$. To better explain our results in this direction, we briefly recall how these resolutions are defined. This will also explain in which sense the present paper extends the results obtained in \cite{GMMP}.

The height of an element $e \in \calN$ can be nicely expressed in terms of the $\mZ$-grading associated to a characteristic of $Ge$. If indeed $h$ is the semisimple element of an $\mathfrak{sl}_2$-triple containing $e$, then the eigenspace decomposition $\gog = \bigoplus_{i \in \mZ} \gog(i)$ defined by $\ad(h)$ induces a $\mZ$-grading of $\gog$, and the height of $e$ is the largest eigenvalue of $\ad(h)$ on $\gog$. Set $\calO = Ge$ and let $\ol \calO$ be its closure. Up to conjugation we may always assume that $h$ is the dominant characteristic of $\calO$. Let $P$ be the standard parabolic subgroup of $G$ with Lie algebra $\gop = \bigoplus_{i \geq 0} \gog(i)$, and set $\goa = \bigoplus_{i \geq 2} \gog(i)$: then $\goa$ is an ideal of $\gop$ contained in its nilradical $\gop^u$. It is well known that $G \goa = \ol \calO$, and that the contraction
$$
	G \times_P \goa \longrightarrow \overline \calO, \qquad [g,x]  \longmapsto gx
$$
is a resolution of singularieties. Moreover, $\calO$ is a spherical nilpotent orbit if and only if $\goa$ is an abelian ideal of $\gob$, and $\calO \subset \calN_2$ if and only if the unipotent radical $P^u$ acts trivially on $\goa$.

Suppose now that $\calO \subset \calN_2$ and denote $\wt \calO = G \times_P \goa$. In this case the orbit structure of $\wt \calO$ largely reduces to that of the abelian ideal $\goa \subset \gob$. In particular, the $B$-orbits in $\wt \calO$ are completely determined by their images inside the flag variety $G/P$ and inside the closure $\ol \calO$. More precisely, let $\Psi \subset \Phi^+$ be the set of roots occurring in $\goa$, let $\Ort(\Psi)$ be the family of the orthogonal subsets of $\Psi$, and let $W^P$ be the set of the minimal length coset representatives of $W/W_P$. Then we have bijections
$$
	B \backslash \wt \calO \; \longleftrightarrow \; B\backslash G /P \times B \backslash \goa \; \longleftrightarrow \; W^P \times \Ort(\Psi),
$$
sending $(w,S) \in W^P  \, \times \, \Ort(\Psi)$ to the $B$-orbit $B[w,e_S] \subset \wt \calO$.

When the involved orthogonal subsets arise from $\overline \calO$, we will deduce Theorem \ref{teo:teorema1} from the following description of the orbit structure of $\wt \calO$.

\begin{theorem*}[{see Theorem \ref{teo:bruhat1}}] \label{teo:teorema2}
Suppose that $\calO \subset \calN_2$ and let $(v,R)$, $(w,S)$ be in $W^P \times \Ort(\Psi)$. Then  $B[v,e_R]  \subset \overline{ B[w,e_S]}$ if and only if $v \leq w$ and $\grs_{v(\wh R)} \leq  \grs_{w(\wh S)}$.
\end{theorem*}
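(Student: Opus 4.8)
The plan is to read the two conditions $v\le w$ and $\grs_{v(\wh R)}\le\grs_{w(\wh S)}$ as governing, respectively, the horizontal direction (the base $G/P$) and the vertical direction (the abelian fibre $\goa$) of the bundle $\wt\calO=G\times_P\goa$, and to prove the two implications separately. I would dispose of necessity first, using the two natural $B$-equivariant maps out of $\wt\calO$. The projection $\pi\colon\wt\calO\to G/P$ sends $B[w,e_S]$ onto the Schubert cell $BwP/P$; since $\pi(\ol X)\subseteq\ol{\pi(X)}$ for any subset $X$, an inclusion $B[v,e_R]\subseteq\ol{B[w,e_S]}$ forces $BvP/P\subseteq\ol{BwP/P}$, that is $v\le w$. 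For the affine inequality I would instead use the contraction $\mu\colon\wt\calO\to\ol\calO$, $[g,x]\mapsto gx$, which is again $B$-equivariant and sends $B[w,e_S]$ onto $Be_{w(S)}$, where $w(S)=\{w\gra\mid\gra\in S\}$. Hence the hypothesised inclusion yields $Be_{v(R)}\subseteq\ol{Be_{w(S)}}$ inside $\gog$, and Proposition \ref{prop:bruhat-involuzioni} gives $\grs_{\wh{v(R)}}\le\grs_{\wh{w(S)}}$; as $v$ and $w$ fix $\grd$ one has $\wh{v(R)}=v(\wh R)$ and $\wh{w(S)}=w(\wh S)$, which is exactly the required inequality. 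Both halves of necessity are thereby reduced to results already available.

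For sufficiency the first observation is the identity $\grs_{v(\wh R)}=v\,\grs_{\wh R}\,v^{-1}$ (from $s_{v\gra}=vs_\gra v^{-1}$ the product telescopes to $v\grs_{\wh R}v^{-1}$, the factors commuting as $\wh R$ is orthogonal, so these are genuine involutions), so the target poset is literally the Bruhat order on the conjugated involutions $w\,\grs_{\wh S}\,w^{-1}$. I would first treat the fibre case $v=w$: over the cell $BwP/P$ the $B$-orbits are the $\Stab_B(wP/P)$-orbits on the fibre $\goa$, and this stabiliser acts through conjugation by $w^{-1}$ into $P$. The hypothesis $\calO\subset\calN_2$ is decisive here, since it forces $P^u$ to act trivially on $\goa$, so the action factors through the Levi $L$ and the relevant group is the image of a Borel of $L$. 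Thus the within-cell orbit poset is a $w$-twisted copy of the $B$-orbit poset on $\goa$, and transporting the abelian-ideal theorem of \cite{GMMP} along this twist yields $B[w,e_R]\subseteq\ol{B[w,e_S]}$ if and only if $w\,\grs_{\wh R}\,w^{-1}\le w\,\grs_{\wh S}\,w^{-1}$, i.e. $\grs_{w(\wh R)}\le\grs_{w(\wh S)}$.

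It then remains to glue the fibres together and realise the general inclusion as a chain of elementary degenerations. I would induct on $\ell(w)$ using the minimal parabolics $P_\gra$, $\gra\in\grD$, together with the Richardson--Springer style monoid action on $B\backslash\wt\calO$. Each generator either acts inside a fibre, where the previous paragraph applies, or raises the cell $w\mapsto s_\gra w$ while simultaneously modifying the orthogonal set; the point is to match this with the left/right multiplication of the involution $w\,\grs_{\wh S}\,w^{-1}$ by $s_\gra$, and hence with the covering relations of the Bruhat order on the twisted involutions $\{w\,\grs_{\wh S}\,w^{-1}\}$ in $\wh W$. Feeding in the dimension formula, whereby $\ell(\grs_{w(\wh S)})$ computes $\dim B[w,e_S]$ up to the correction $|S|$, one gets that the assignment $(w,S)\mapsto(BwP/P,\grs_{w(\wh S)})$ is order- and dimension-preserving, so that matching covers forces it to be an isomorphism onto its image; this upgrades the covering relations to the full order and completes sufficiency.

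The main obstacle I anticipate is exactly the cross-cell step. When the base is degenerated, $w\mapsto v<w$, the orthogonal label attached to the limiting orbit is forced to change, and one must identify that limit precisely as $B[v,e_R]$ for the correct $R$ and check that the affine bookkeeping $\grs_{v(\wh R)}\le\grs_{w(\wh S)}$ captures exactly these degenerations and introduces no spurious inclusions. This is where the interplay between the geometry of $\wt\calO$ and the combinatorics of twisted involutions in the affine Weyl group is most delicate, and where the dimension formula is indispensable to exclude unexpected coincidences of orbits; making this compatibility precise is the technical heart of the argument.
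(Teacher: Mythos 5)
Your necessity argument is exactly the paper's (Proposition \ref{prop:implicazione-facile-sopra}): project to $G/P$ for $v\le w$ and push down to $\ol\calO$ and invoke Proposition \ref{prop:bruhat-involuzioni} for the affine inequality. The overall plan for sufficiency --- induction on $\ell(w)$ via the minimal parabolics $P_\gra$ and a Richardson--Springer style analysis --- is also the paper's. But two steps of your sufficiency argument have genuine gaps.

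First, the fibre case. It is true that the $B$-orbits lying over the cell $BwP/P$ are governed by the $B_L$-orbits on $\goa$ (since $w\in W^P$ the image of $w^{-1}.B\cap P$ in $L$ is $B_L$), so \cite[Theorem 6.3]{GMMP} gives $B[w,e_R]\subset\ol{B[w,e_S]}$ if and only if $\grs_{\wh R}\le\grs_{\wh S}$. However, ``transporting along the twist'' to conclude that this is equivalent to $w\grs_{\wh R}w^{-1}\le w\grs_{\wh S}w^{-1}$ is not legitimate: conjugation does \emph{not} preserve the Bruhat order (already in $S_3$, $s_1\le s_1s_2s_1$ but $s_2s_1s_2\not\le s_1$). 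The equivalence $\grs_{\wh R}\le\grs_{\wh S}\Leftrightarrow\grs_{w(\wh R)}\le\grs_{w(\wh S)}$ for $R,S\in\Ort(\Psi)$ and $w\in W^P$ is a nontrivial statement that is a \emph{consequence} of the theorem, not an input to it; assuming it here is circular. The paper never needs this equivalence in isolation: its induction tracks how $\grs_{w(\wh S)}$ itself transforms under a left descent $s_\gra w<w$, the key point being Proposition \ref{prop:discese esterne}, which shows such an $\gra$ is either a complex descent for $\grs_{w(\wh S)}$ or commutes with it, so that $s_\gra\circ\grs_{w(\wh S)}=\grs_{s_\gra w(\wh S)}$.

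Second, the cross-cell step --- which you yourself flag as the main obstacle --- is where the actual content lies, and your proposed way around it does not work. An injective, order- and rank-preserving map of graded posets that sends covering relations to covering relations need not be an order isomorphism onto its image: nothing prevents incomparable pairs from becoming comparable in the target. What must be proved is precisely the converse implication, that $v\le w$ together with $\grs_{v(\wh R)}\le\grs_{w(\wh S)}$ forces the orbit inclusion, and the paper does this by the three-case induction in the proof of Theorem \ref{teo:bruhat1}: choosing $\gra\in\grD$ with $s_\gra w<w$, one separately treats the cases where $\gra$ is an external descent for $(v,R)$, an internal descent, or not a descent, and in each case uses the lifting properties of Lemmas \ref{lemma:par1} and \ref{lemma:par2} together with Propositions \ref{prop:discese interne} and \ref{prop:discese esterne} to descend both pairs compatibly to $(\,\cdot\,,\,\cdot\,)$ with smaller $\ell(w)$. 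Without this bookkeeping the argument is incomplete.
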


The proofs of Theorem \ref{teo:teorema1} and Theorem \ref{teo:teorema2} are by induction. At the basis of the inductions we will find the results obtained in \cite{GMMP} in the case of the abelian ideals of $\gob$. The inductive step is based on the action of the minimal parabolic subgroups of $G$: adapting the approach of Richardson and Springer \cite{RS} to our context (as already done in \cite{GMMP}), we will be able to control the orbits of the minimal parabolic subgroups on $\calN_2$ in terms of the affine involutions associated to the orthogonal subsets whose corresponding nilpotent elements are in $\calN_2$.

As no assumption on the height was involved in \cite{GMMP}, it seems reasonable that both Theorem \ref{teo:teorema1} and  Theorem \ref{teo:teorema2} might hold in the general spherical case. Beyond the technical problems arising in this more general context, the main obstruction lies in the fact that the orbits of  shape $Be_S$ do not cover the full spherical locus $\calN_{\mathrm{sph}}$.

When $G$ is the special linear group $\mathrm{SL}_n(\mk)$, the problem addressed in Theorem \ref{teo:teorema1} was considered by Melnikov in the papers \cite{melnikov}, \cite{melnikov2}, \cite{melnikov3} and by Boos and Reineke in \cite{boos1}. Motivated by the study of the orbital varieties (that is, the irreducible components of the intersection of a nilpotent orbit with $\gob$), Melnikov studied the action of the Borel subgroup $B$ of upper triangular matrices on $\calN_2 \cap \gob$. More precisley, a parametrization of the $B$-orbits in $\calN_2 \cap \gob$ in terms of the involutions in the symmetric group was given  in \cite{melnikov}, whereas the partial order among these orbits was described in \cite{melnikov2} by introducing a new partial order among the involutions substantially different from their usual Bruhat order. The problem was then translated in the combinatorial language of link patterns in \cite{melnikov3}, and generalized by Boos and Reineke in \cite{boos1} in order to give a complete description of the $B$-orbits in $\calN_2$ and of their partial order. The parametrization of the $B$-orbits in terms of link patterns was later generalized to all classical groups by Boos, Cerulli Irelli and Esposito in \cite{boos2}, where  the action of the Borel subgroups of $G$ on the subvariety of $\calN_2$ defined by the equation $e^2 = 0$ is considered.

The singularities of the closures of the $B$-orbits in $\calN_2$ have been investigated by Bender and Perrin in \cite{bender}. When $G$ is a classical group, there are also given explicit descriptions of the $B$-orbits in $\calN_2$ and of their partial order in a case-by-case language.

Other parametrizations of the $B$-orbits in $\calN_2$ have been given by Chaput, Fresse and Gobet in the recent preprint \cite{CFG}. In the case of $\mathrm{SL}_n(\mk)$, there is provided yet another description of the partial order among the $B$-orbits.

Finally, we mention the work of Ignatyev \cite{ignatyev1}, \cite{ignatyev2}, \cite{ignatyev3}. If $G$ is a classical group, in these papers the author attach to any involution in $W$ a coadjoint $B$-orbit in the dual Lie algebra $\gou^*$ (where $\gou$ denotes the nilradical of $\gob$), and the partial order among these orbits is studied in terms of the Bruhat order of the corresponding involutions.

We now describe the structure of the paper. In Section 1 we give preliminaries and set up  notation. In Section 2 we study in full generality the $B$-orbits in $\calN$ associated to a strongly orthogonal sets of roots. In Section 3 we restrict to the case of a nilpotent orbit $Ge \subset \calN_2$, and we study the action of $B$ on its closure $\overline{Ge}$ and on its rational resolution $\wt{Ge} \rightarrow \overline{Ge}$. Finally, in Section 4  we prove Theorem \ref{teo:teorema2}, and in Section 5 we prove Theorem \ref{teo:teorema1}.

\textit{Acknowledgements.} We thank A. Maffei for useful discussions on the subject, and the anonymous referee for his/her reading and comments.

\section{Notation and preliminaries.}

In this section we clarify the notation that will be used throughout the paper, and will expand some of the preliminaries already outlined in the introduction.

Throughout the paper $G$ will be an almost simple group over an algebraically closed field $\mk$ of characteristic zero, with a fixed Borel subgroup $B$ and fixed maximal torus $T \subset B$. We will denote by $W =  N_G(T)/T$ the Weyl group of $G$ respect to $T$ and by $\Phi$ the set of roots of $G$ defined by $T$, whereas $\Phi^+ \subset \Phi$ and $\grD \subset \Phi^+$ will denote respectively the set of positive and of simple roots of $\Phi$ defined by $B$. We also set  $\Phi^- = \Phi \setminus \Phi^+$. We will regard $W$ endowed with the Bruhat order $\leq$ and with the length function $\ell : W \rightarrow \mN$ defined by $\grD$. The longest element of $W$ will be denoted $w_0$.

If $H \subset G$ is a closed subgroup, we will denote by $H^u$ the unipotent radical of $H$ and by $H^\circ$ the identity component. The Lie algebra of $H$ will be denoted with the corresponding gothic letter $\goh$. If $g \in G$, then we put  $g.H = gHg^{-1}$.

We regard the Lie algebra $\got$ of $T$ as a Euclidean vector space endowed with the invariant scalar product induced by the Killing form $\kappa$ on $\gog$. This induces a $W$-invariant scalar product $(.,.)$ on $\got^* \supset \Phi$. The set of long roots in $\Phi$ will be denoted by $\Phi_\ell$, and the set of short roots by $\Phi_s$. When all the root have the same length, every root will be regarded as long. If $\gra \in \Phi$, the coroot of $\gra$ will be denoted by $\gra^\vee$, and the pairing between roots and coroots will be denoted by $\langle .,. \rangle$. 

If $\gra \in \Phi$, we will denote by $\gog_\gra$ the corresponding root space and by $U_\gra = \exp(\gog_\gra)$ the corresponding root subgroup. For all $\gra \in \Phi$, we fix  nonzero elements $e_\gra \in \gog_\gra$, $h_\gra \in \got$ and $f_\gra \in \gog_{-\gra}$ in such a way that $\{e_\gra, h_\gra, f_\gra \}$ is an $\mathfrak{sl}_2$-triple. The reflection of $W$ defined by $\gra$ will be denoted by $s_\gra$, it can be represented in $N_G(T)$ by exponentiating the elements in the corresponding triple as follows
$$
	s_\gra = \exp(-f_\gra) \exp(e_\gra) \exp(-f_\gra).
$$ 

If $w \in W$, we will define its \textit{set of inversions} as
$\Phi^+(w) = \{\grb \in \Phi^+ \; | \; w(\grb) \in \Phi^-\}$.
We will regard $\Phi$ as a partially ordered set with the dominance order $\leq$, defined by $\gra \leq \grb$ if and only if $\grb - \gra \in \mN \grD$. The highest root of $\Phi$ will be denoted by $\theta$.

We record in the following lemma some consequences of the subword and of the lifting properties of Coxeter groups \cite{BB} that will be useful in the paper.

\begin{lemma} \label{lemma:par1}
Let $\gra\in \Delta$ and let $v,w \in W$ be such that $v < w$. The following hold:
\begin{itemize}
 \item[i)]  If $s_\gra v > v$ and $s_\gra w> w$, then $s_\gra v < s_\gra w$.
 \item[ii)]  If $s_\gra v < v$ and $s_\gra w < w$, then $s_\gra v < s_\gra w$. 
 \item[iii)]  If $s_\gra v > v$ and $s_\gra w < w$, then $s_\gra v \leq w$ and $v \leq s_\gra w$.
\end{itemize}
\end{lemma}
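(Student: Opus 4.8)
The plan is to deduce all three statements from the \emph{Lifting Property} of Coxeter groups (Björner--Brenti \cite{BB}, Proposition~2.2.7), used in its left-multiplication form: if $x < y$ in $W$ and $s_\gra$ (with $\gra \in \grD$) satisfies $s_\gra x > x$ and $s_\gra y < y$, then $s_\gra x \leq y$ and $x \leq s_\gra y$. The left version is equivalent to the more commonly stated right version through the involution $w \mapsto w^{-1}$, which preserves the Bruhat order and interchanges left and right descents. I will also use throughout the elementary fact that for a simple reflection $s_\gra$ and any $x \in W$ one has $\ell(s_\gra x) = \ell(x) \pm 1$, so that exactly one of $s_\gra x > x$ and $s_\gra x < x$ holds. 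The whole point of the argument is that $i)$, $ii)$, $iii)$ are the \emph{same} property applied to three different pairs of elements.

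Part $iii)$ is the Lifting Property applied to the pair $v < w$ itself: the hypotheses $s_\gra v > v$ and $s_\gra w < w$ are precisely those of the property, and its conclusion is exactly $s_\gra v \leq w$ and $v \leq s_\gra w$. For part $i)$, I would instead apply the property to the pair $(v,\, s_\gra w)$. Since $s_\gra w > w$ we have $v < w < s_\gra w$, hence $v < s_\gra w$; moreover $s_\gra(s_\gra w) = w < s_\gra w$ and $s_\gra v > v$, so the hypotheses hold with $x = v$ and $y = s_\gra w$. The conclusion gives $s_\gra v \leq s_\gra w$ (as well as $v \leq w$, already known). Finally $s_\gra v = s_\gra w$ would force $v = w$, contradicting $v < w$; hence the inequality is strict and $s_\gra v < s_\gra w$.

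For part $ii)$, I would apply the property to the pair $(s_\gra v,\, w)$. From $s_\gra v < v < w$ we get $s_\gra v < w$; since $s_\gra(s_\gra v) = v > s_\gra v$ and $s_\gra w < w$, the hypotheses are met with $x = s_\gra v$ and $y = w$. The conclusion yields $s_\gra v \leq s_\gra w$ (together with $v = s_\gra(s_\gra v) \leq w$), and again $s_\gra v \neq s_\gra w$ upgrades this to $s_\gra v < s_\gra w$, as desired.

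I do not anticipate a genuine obstacle, since the content is standard Coxeter-group combinatorics. The only points requiring care are the correct bookkeeping of \emph{which} pair of elements the Lifting Property is applied to in each case, so that the ascent/descent hypotheses line up, and the elementary passage from $\leq$ to $<$ in parts $i)$ and $ii)$ via injectivity of left multiplication by $s_\gra$. An alternative route would unwind everything through the Subword Property by choosing reduced words beginning with $s_\gra$ whenever $s_\gra$ is a left descent, but the Lifting Property gives the cleanest and shortest argument.
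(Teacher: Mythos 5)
Your argument is correct and is exactly the route the paper intends: the paper states this lemma without proof, merely citing it as a consequence of the lifting (and subword) properties of Coxeter groups from Bj\"orner--Brenti, and your derivation of parts $i)$ and $ii)$ by applying the left-handed lifting property to the shifted pairs $(v, s_\gra w)$ and $(s_\gra v, w)$, with part $iii)$ being the property itself, is the standard way to make that citation precise. The bookkeeping of ascents and descents in each application checks out, as does the upgrade from $\leq$ to $<$ via injectivity of left multiplication.
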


Let $P$ be a standard parabolic subgroup of $G$ with Levi decompostion $P = LP^u$. We denote by $P^-$ the opposite parabolic subgroup of $P$ and by $B_L = B \cap L$ the Borel subgroup of $L$ defined by $B$. Moreover $W_L$ denotes the Weyl group of $L$ (regarded as a subgroup of $W$), and $W^P$ denotes the set of minimal length coset representatives of $W/W_L$. Recall that any $w\in W$ has a unique decomposition of the shape $w=u v$ with $u \in W^P$ and $v \in W_L$, and $\ell(w)=\ell(u)+\ell(v)$: the element $u$ is the element of minimal length in $wW_L$ and will be denoted by $(w)^P$, whereas we will denote $v = (w)_P$. The longest element of $W_L$ will be denoted by $w_L$.

Let $\calO \subset \calN$ be a nilpotent orbit. Recall that a semisimple element $h \in \gog$ is said to be a \textit{characteristic} for $\calO$ if there exists $e \in \calO$ such that $e$ is the nilpositive element of an $\mathfrak{sl}_2$-triple with semisimple element $h$. If $h$ is a characteristic for $\calO$, it is well known that $Gh \cap \got$ contains a unique dominant element, namely an element $h_+ \in \got$ such that $\gra(h_+) \geq 0$ for all $\gra \in \grD$. The \textit{weighted Dynkin diagram} of $\calO$ is the Dynkin diagram of $G$ labelled with the non-negative integers $\gra(h_+)$ for $\gra \in \grD$; it uniquely determines the $G$-orbit of $e$ (see \cite{CMcG} for more details). 

If $\{e,h,f\}$ is an $\mathfrak{sl}_2$-triple in $\gog$, we set
$$
	\gog(i,h) = \{x \in \gog \; | \; [h,x] = i x\}.
$$
the eigenspace of weight $i \in \mZ$ defined by $\ad(h)$. As already recalled, the height of $e$ is the largest eigenvalue of $\ad(h)$ on $\gog$ (see \cite{Pa2}). When $h \in \got$, as will always be in our setting, the eigenspaces $\gog(i,h)$ are all $\got$-stable, and we set 
$$\Phi(i,h) = \{ \gra \in \Phi \; | \; \gog_\gra \subset \gog(i,h)\}.$$
When the semisimple element $h$ is clear from the context, we will denote $\gog(i,h)$ and $\Phi(i,h)$ simply by $\gog(i)$ and $\Phi(i)$.

Let $\widehat \gog =\gog[z,z^{-1}]\oplus \mk C \oplus \mk d$ be the affinization of $\gog$, let $\wh \Phi = \wh\Phi_{\mathrm{re}}\sqcup\pm\mathbb N\grd$ be the cor\-res\-pon\-ding affine root system, with real roots 
$\wh\Phi_{\mathrm{re}}=\Phi \pm \mZ \grd$ and fundamental imaginary root $\delta$. Let $\wh W$ be the  Weyl group of  $\wh\Phi$. We denote by $\leq$ the Bruhat order on $\wh W$ defined by the set of simple roots  $\wh \grD = \grD \cup \{\grd-\theta\} \subset \wh \Phi$, and by $\ell : \wh W \rightarrow \mN$ the corresponding length function. Given  $\gra \in \wh \Phi_{\mathrm{re}}$ we denote by $s_\gra$ the corresponding reflection in $\wh W$, and if $S \subset \wh \Phi_{\mathrm{re}}$ is a subset of pairwise orthogonal roots we set $\grs_S = \prod_{\gra \in S}s_\gra$.

We denote by $\widehat G$ the Kac-Moody group associated to $\widehat\gog$, and by $\widehat T=T \times \mk^*_C \times \mk^*_d$
the maximal torus of $\widehat G$ containing $T$ whose Lie algebra contains $C$ and $d$. Given $\gra\in \widehat \Phi_{\mathrm{re}}$, let $\gog_\gra \subset \widehat \gog$ be the corresponding root space. In particular if $\gra \in \Phi$ and $n \in \mZ$, then we have $\gog_{\gra + n\grd} = z^n\gog_\gra$. Moreover we can choose the root vectors in such a way that the reflection $s_{\gra+n\grd}\in \wh W$ is represented in $\wh G$ as
$$ s_{\gra+n\grd} = \exp(-z^{-n} f_\gra)\, \exp(z^n e_\gra) \,\exp(-z^{-n} f_\gra). $$

We now recall some properties of the set $\wh \calI$ of the involutions in $\wh W$, regarded as a poset with the Bruhat order. The definition and the statements are adapted from the work of Richardson and Springer \cite{RS}.

If $\grs \in \wh \calI$, the \emph{length} of $\grs$ (regarded as an involution) is 
$$
L(\grs)=\frac{\ell(\grs)+\rk (\mathrm{id}-\grs)}{2}.
$$
Notice that if $\grs = \grs_S$ for some orthogonal set $S \subset \wh \Phi_{\mathrm{Re}}$, as it will always be in our setting, then $\rk (\mathrm{id}-\grs) = \card(S)$.

Given $\gra \in \wh \grD$ and $\grs \in \wh \calI$, define
$$
s_\gra \circ \grs=
\begin{cases}
 s_\gra \grs &\text{if } s_\gra \grs = \grs s_\gra \\
 s_\gra \grs s_\gra &\text{if } s_\gra \grs \neq \grs s_\gra
\end{cases}
$$

The simple root $\gra$ is said to be a \textit{descent} for $\grs$ if $\grs(\gra) < 0$. If moreover $\grs(\gra) = -\gra$, then $\gra$ is called a \textit{real descent}, otherwise it is called a \textit{complex descent}.

In the following lemma we record some connections between the descents of an involution and its length as an involution (see e.g. \cite[Lemma 2.6]{GMMP}).

\begin{lemma}\label{lemma:discese-CR}
Let $\grs \in \wh \calI$ and $\gra\in \wh \grD$, then the following statements are equivalent: 
\begin{itemize}
        \item[i)] $\gra$ is a descent for $\grs$; 
        \item[ii)] $s_\gra\grs<\grs$;
        \item[iii)] $\grs\,s_\gra<\grs$;
 		\item[iv)] $s_\gra\circ \grs <\grs$; 
        \item[v)] $L(s_\gra\circ \grs)=L(\grs)-1$.
\end{itemize}

If moreover $\alpha$ is a descent for $\grs$, then it is real if and only if $s_\gra\,\grs=\grs \, s_\gra$, and it is complex if and only if
$s_\gra\grs\,s_\gra<s_\gra \grs$ and $s_\gra\grs\,s_\gra<\grs \,s_\gra$. 
\end{lemma}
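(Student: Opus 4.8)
The whole statement reduces to elementary manipulations in the Bruhat order together with the action of $\wh W$ on $\wh\Phi$, and the single structural fact that $\grs=\grs^{-1}$. For the equivalence of i), ii), iii) I would invoke the standard descent criteria $s_\gra w<w\iff w^{-1}(\gra)<0$ and $ws_\gra<w\iff w(\gra)<0$, valid for every $w\in\wh W$ and $\gra\in\wh\grD$. Applying them to $w=\grs$ and using $\grs^{-1}=\grs$, both ii) and iii) become $\grs(\gra)<0$, which is exactly i). The next preliminary step is to record the dichotomy governing the $\circ$-operation: since $\grs s_\gra\grs^{-1}=s_{\grs(\gra)}$, one has $s_\gra\grs=\grs s_\gra$ if and only if $\grs(\gra)=\pm\gra$; when $\gra$ is a descent this forces $\grs(\gra)=-\gra$. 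Thus, among descents, $s_\gra$ and $\grs$ commute exactly when $\gra$ is real, which already proves the first assertion of the ``moreover'' part and tells us that $s_\gra\circ\grs=s_\gra\grs$ in the real case and $s_\gra\circ\grs=s_\gra\grs s_\gra$ in the complex case.

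For iv) the geometric input is that $s_\gra$ fixes $\Phi^+\setminus\{\gra\}$ and $\Phi^-\setminus\{-\gra\}$ setwise, swapping $\gra$ and $-\gra$. Assume $\gra$ is a descent and set $\grb:=\grs(\gra)<0$. If $\gra$ is real, then $s_\gra\circ\grs=s_\gra\grs<\grs$ by ii). If $\gra$ is complex, then $\grb\neq-\gra$, so $\grb\in\Phi^-\setminus\{-\gra\}$ and hence $s_\gra(\grb)\in\Phi^-\setminus\{-\gra\}$; consequently $(s_\gra\grs)(\gra)=s_\gra(\grb)<0$, giving $s_\gra\grs s_\gra<s_\gra\grs<\grs$. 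In either case $s_\gra\circ\grs<\grs$, proving i)$\Rightarrow$iv). The converse is the same computation with all signs reversed: if $\gra$ is not a descent then $\grb>0$, $s_\gra\grs>\grs$, and $s_\gra(\grb)>0$, so $s_\gra\circ\grs>\grs$; contraposition yields iv)$\Rightarrow$i).

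For v) I would compute $L$ separately in the two cases using $L(\grs)=(\ell(\grs)+\rk(\mathrm{id}-\grs))/2$. In the real case $s_\gra\circ\grs=s_\gra\grs$, so $\ell$ drops by $1$; moreover $s_\gra\grs$ now fixes $\gra$ (since $s_\gra\grs(\gra)=s_\gra(-\gra)=\gra$) while agreeing with $\grs$ on $\gra^\perp$, so the $(-1)$-eigenspace loses precisely the direction $\gra$ and $\rk(\mathrm{id}-s_\gra\grs)=\rk(\mathrm{id}-\grs)-1$; hence $L$ drops by one. In the complex case $s_\gra\grs s_\gra$ is conjugate to $\grs$, so $\rk(\mathrm{id}-s_\gra\grs s_\gra)=\rk(\mathrm{id}-\grs)$, while the two consecutive simple descents established above give $\ell(s_\gra\grs s_\gra)=\ell(\grs)-2$; again $L$ drops by one. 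This proves i)$\Rightarrow$v), and the symmetric non-descent computation gives $L(s_\gra\circ\grs)=L(\grs)+1$, so that v)$\Rightarrow$i) by contraposition. Finally, the complex characterization follows from the same sign analysis: for a complex descent I have shown $s_\gra\grs s_\gra<s_\gra\grs$, and symmetrically (applying the right-hand descent criterion to $(\grs s_\gra)^{-1}(\gra)=s_\gra(\grb)<0$) also $s_\gra\grs s_\gra<\grs s_\gra$; conversely a real descent has $s_\gra\grs s_\gra=\grs$, which lies above $s_\gra\grs$, so both strict inequalities can hold only in the complex case.

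I expect the main obstacle to be the bookkeeping in the complex case: keeping the real and complex branches cleanly separated throughout, controlling the sign of $s_\gra(\grs(\gra))$ via the setwise stability of $\Phi^-\setminus\{-\gra\}$, and justifying the invariance $\rk(\mathrm{id}-s_\gra\grs s_\gra)=\rk(\mathrm{id}-\grs)$ under conjugation against the rank drop $\rk(\mathrm{id}-s_\gra\grs)=\rk(\mathrm{id}-\grs)-1$ in the real case, since these two opposite behaviours of the rank must compensate the different behaviour of $\ell$ to yield the uniform decrease of $L$ by one.
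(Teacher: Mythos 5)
Your argument is correct and complete; note that the paper itself gives no proof of this lemma, citing it to \cite[Lemma 2.6]{GMMP} (itself adapted from Richardson--Springer), so your write-up is a genuine self-contained derivation rather than a variant of an argument in the text. The structure is the natural one: the descent criteria plus $\grs=\grs^{-1}$ give i)$\Leftrightarrow$ii)$\Leftrightarrow$iii); the identity $\grs s_\gra\grs^{-1}=s_{\grs(\gra)}$ gives the real/complex dichotomy for the $\circ$-operation; and the sign analysis of $s_\gra(\grs(\gra))$ via the setwise stability of $\wh\Phi^-\setminus\{-\gra\}$ handles iv), v) and the final characterization. Three small points deserve an explicit word if you write this out in full. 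First, in the real case of v), the claim that $s_\gra\grs$ ``agrees with $\grs$ on $\gra^\perp$'' needs the preliminary observation that $\grs$ preserves the wall $H_\gra=\ker\langle\cdot,\gra^\vee\rangle$, which follows from $\grs(\gra^\vee)=-\gra^\vee$ and the invariance of the pairing; together with $V=H_\gra\oplus\mk\gra$ and $s_\gra\grs(\gra)=\gra$ this does give $V^-(s_\gra\grs)=V^-(\grs)\cap H_\gra$ and the rank drop by exactly one. Second, the ``symmetric non-descent computation'' for v) splits into two subcases, $\grs(\gra)=\gra$ (length up by $1$, rank up by $1$) and $\grs(\gra)=\grb>0$ with $\grb\neq\gra$ (length up by $2$, rank unchanged); both yield $L(s_\gra\circ\grs)=L(\grs)+1$, but they should both be recorded. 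Third, in the last paragraph the inequality $s_\gra\grs s_\gra<\grs s_\gra$ is obtained from the \emph{left} descent criterion applied to $w=\grs s_\gra$ (equivalently by applying inversion to $s_\gra\grs s_\gra<s_\gra\grs$), not the right-hand one as you name it; the computation $(\grs s_\gra)^{-1}(\gra)=s_\gra(\grs(\gra))<0$ is nonetheless the right one.
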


In particular, if $\grs \in \wh \calI$ and $\gra \in \wh \grD$, then $\gra$ is a complex descent for $\grs$ if and only if $\ell(s_\gra \circ \grs) = \ell(\grs) -2$, whereas it is a real descent if and only if $\ell(s_\gra \circ \grs) = \ell(\grs) -1$.

Finally in the following lemma we record a statement such as Lemma \ref{lemma:par1} in the context of $\wh \calI$ (see e.g. \cite[Lemma 4.7]{GMMP}).

\begin{lemma} \label{lemma:par2}
Let $\gra\in \widehat \Delta$ and let $\grs, \tau \in \wh \calI$ be such that $\grs < \tau$. The following hold:
\begin{itemize}
 \item[i)]  If $s_\gra \circ \grs > \grs$ and $s_\gra \circ \tau > \tau$, then $s_\gra \circ \grs < s_\gra \circ \tau$.
 \item[ii)]  If $s_\gra \circ \grs < \grs$ and $s_\gra \circ \tau < \tau$, then $s_\gra \circ \grs < s_\gra \circ \tau$. 
 \item[iii)]  If $s_\gra \circ \grs > \grs$ and $s_\gra \circ \tau < \tau$, then $s_\gra \circ \grs \leq \tau$ and $\grs \leq s_\gra \circ \tau$.
\end{itemize}
\end{lemma}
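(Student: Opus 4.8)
The plan is to reduce the whole statement to the ordinary Bruhat order on $\wh W$. Since $\wh\calI$ is regarded as a poset with the order inherited from $\wh W$, the relation $\grs<\tau$ means $\grs<\tau$ in $\wh W$, and the only feature distinguishing the present lemma from Lemma \ref{lemma:par1} is the operation $\grs\mapsto s_\gra\circ\grs$, which by definition is $s_\gra\grs$ when $s_\gra$ commutes with $\grs$ and $s_\gra\grs s_\gra$ otherwise. I would begin by translating the hypotheses into the ambient group using Lemma \ref{lemma:discese-CR} and the length remark following it: namely, $s_\gra\circ\grs>\grs$ is equivalent to $\gra$ being an ascent of $\grs$ (so that $s_\gra\grs>\grs$ and, since for involutions descents are two-sided, also $\grs s_\gra>\grs$); and passing from $s_\gra\grs$ to $s_\gra\grs s_\gra$ is a length-increasing right multiplication precisely when $\gra$ is a complex ascent, and length-decreasing precisely when $\gra$ is a complex descent. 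Throughout I would use the analogue of Lemma \ref{lemma:par1} in $\wh W$, valid verbatim since its proof only uses the subword and lifting properties of Coxeter groups.

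Statement iii) is the cleanest, and I would prove it first, uniformly in the real/complex alternative, by the lifting property. For $s_\gra\circ\grs\leq\tau$, observe that $\gra$ is a two-sided descent of $\tau$, so the lower interval of $\tau$ is stable under left and right multiplication by $s_\gra$; applying this to $\grs\leq\tau$ once (real case) or once on each side (complex case) gives $s_\gra\circ\grs\leq\tau$. Dually, for $\grs\leq s_\gra\circ\tau$, the lifting property turns $\grs<\tau$ with $s_\gra\tau<\tau$ and $s_\gra\grs>\grs$ into $\grs\leq s_\gra\tau$; in the complex case a second application, now using that $\gra$ is a right descent of $s_\gra\tau$ while $\grs s_\gra>\grs$, upgrades this to $\grs\leq s_\gra\tau s_\gra=s_\gra\circ\tau$. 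No separate strictness check is needed here, since the ascent hypotheses on $\grs$ automatically force the relevant inequalities to be strict.

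For i) and ii) I would peel off the two-sided multiplication and invoke parts (i) and (ii) of the affine Lemma \ref{lemma:par1}. In i), with $\gra$ an ascent of both $\grs$ and $\tau$, Lemma \ref{lemma:par1}(i) yields $s_\gra\grs<s_\gra\tau$; if $\gra$ is complex for both I right-multiply by $s_\gra$, which increases length on both sides, and conclude by the right-hand version of the same statement. The genuinely mixed subcases, where $\gra$ is real for one involution and complex for the other, I would settle either by transitivity through the intermediate element $s_\gra\grs$ or $s_\gra\tau$, or by Lemma \ref{lemma:par1}(iii); the latter only gives a weak inequality, which I would then upgrade to a strict one. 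Statement ii) is entirely dual, using Lemma \ref{lemma:par1}(ii) together with right multiplications that decrease length.

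The main obstacle is precisely the two-sided conjugation $s_\gra\grs s_\gra$ in the complex case: a single multiplication is governed directly by Lemma \ref{lemma:par1}, but after the first one the second multiplication may move in the opposite direction, so one must keep track of the $\pm 1$ versus $\pm 2$ length changes supplied by Lemma \ref{lemma:discese-CR} in order to know which part of Lemma \ref{lemma:par1} to apply. The subtlest point is the strictness in the mixed subcases of i) and ii): to exclude equality I would note that, say, $s_\gra\grs s_\gra=s_\gra\tau$ forces $\tau=\grs s_\gra$, which in turn forces $s_\gra$ to commute with the involution that was assumed to be complex, a contradiction; since the Bruhat order is a partial order, a weak inequality together with $\neq$ then gives the desired strict inequality.
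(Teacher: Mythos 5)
The paper does not prove this lemma at all: it is quoted verbatim from \cite[Lemma 4.7]{GMMP}, with no argument given here. Your proposal is a correct proof, and it follows what is essentially the standard Richardson--Springer-style reduction to the ordinary lifting property (and is, in substance, the argument one finds in the cited reference): translate ascent/descent of $\grs$ into two-sided (left and right) ascent/descent via Lemma \ref{lemma:discese-CR}, apply the Coxeter-group lifting property once or twice according to whether the relevant descent is real or complex, handle the mixed subcases of i) and ii) by transitivity, and rule out the borderline equalities by observing that $\tau=\grs s_\gra$ (or $\grs=\tau s_\gra$) can only be an involution if $s_\gra$ commutes with $\grs$ (resp.\ $\tau$), contradicting complexity. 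The one place where your write-up is slightly terse --- the need for strictness of the intermediate inequality before the second application of the lifting property in the complex cases of iii) --- is correctly dispatched by the ascent hypothesis exactly as you indicate, so there is no gap.
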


\section{Nilpotent elements associated to strongly orthogonal\\ sets of roots}

Given $S \subset \Phi$ a strongly orthogonal subset, set
$$
	e_S = \sum_{\gra \in S} e_\gra, \qquad 
	h_S = \sum_{\gra \in S} h_\gra, \qquad 
	f_S = \sum_{\gra \in S} f_\gra
$$

\begin{proposition}	[see {\cite[Lemma 6.1]{GMP}}]	\label{prop:nilpotent}
Let $S \subset \Phi$ be a strongly orthogonal subset, then $e_S \in \calN$, and $\height(e_S) \leq 4$.
\end{proposition}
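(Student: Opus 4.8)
The plan is to exhibit $e_S$ as the nilpositive element of an explicit $\mathfrak{sl}_2$-triple assembled from the individual triples $\{e_\gra,h_\gra,f_\gra\}$, $\gra\in S$, and then to read off the height from the eigenvalues of $\ad(h_S)$. First I would record that strong orthogonality makes these triples pairwise commute. For distinct $\gra,\grb\in S$ neither $\gra+\grb$ nor $\gra-\grb$ is a root, so the brackets $[e_\gra,e_\grb]$, $[e_\gra,f_\grb]$, $[f_\gra,e_\grb]$, $[f_\gra,f_\grb]$ all vanish, lying in root spaces attached to non-roots; moreover strong orthogonality forces $(\gra,\grb)=0$ (if $(\gra,\grb)\neq0$ with $\gra\neq\pm\grb$ then one of $\gra\pm\grb$ is a root), whence $[h_\gra,e_\grb]=\langle\grb,\gra^\vee\rangle e_\grb=0$ and likewise $[h_\gra,f_\grb]=0$. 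Since the $e_\gra$ commute and each $\ad e_\gra$ is nilpotent, $\ad e_S=\sum_{\gra\in S}\ad e_\gra$ is a sum of commuting nilpotent operators, hence nilpotent, so $e_S\in\calN$. The same relations give $[h_S,e_S]=2e_S$, $[h_S,f_S]=-2f_S$ and $[e_S,f_S]=\sum_{\gra\in S}[e_\gra,f_\gra]=h_S$, so $\{e_S,h_S,f_S\}$ is an $\mathfrak{sl}_2$-triple. (I would flag at the outset the standing convention that a strongly orthogonal set contains no two opposite roots; without it $e_S$ need not be nilpotent.)

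Since $h_S\in\got$, the description of the height recalled in the preliminaries gives $\height(e_S)=\max_{\grg\in\Phi}\grg(h_S)$: the $0$-eigenspace is $\got$, and the maximum over roots is positive because any $\gra\in S$ has $\gra(h_S)=2$. As $\grg(h_S)=\sum_{\gra\in S}\langle\grg,\gra^\vee\rangle$, the proposition reduces to the inequality
\[
\sum_{\gra\in S}\langle\grg,\gra^\vee\rangle\;\leq\;4\qquad\text{for all }\grg\in\Phi.
\]

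This inequality is the heart of the matter, and I would prove it uniformly by projecting $\grg$ onto $\Span(S)$. As $S$ is orthogonal, Pythagoras gives $(\grg,\grg)\geq\sum_{\gra\in S}\frac{(\grg,\gra)^2}{(\gra,\gra)}$. Rewriting each summand through the identity $\frac{(\grg,\gra)^2}{(\gra,\gra)}=\frac{(\grg,\grg)}{4}\langle\grg,\gra^\vee\rangle\langle\gra,\grg^\vee\rangle$ and dividing by $(\grg,\grg)>0$ yields the bound on the quadratic quantity $\sum_{\gra\in S}\langle\grg,\gra^\vee\rangle\langle\gra,\grg^\vee\rangle\leq4$. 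Each summand is a nonnegative integer, its two factors sharing the sign of $(\grg,\gra)$. If $\grg\notin S\cup(-S)$, then every $\gra\in S$ is non-proportional to $\grg$, so $|\langle\gra,\grg^\vee\rangle|\geq1$ whenever $(\grg,\gra)\neq0$, giving $|\langle\grg,\gra^\vee\rangle|\leq\langle\grg,\gra^\vee\rangle\langle\gra,\grg^\vee\rangle$; summing, $\sum_{\gra}\langle\grg,\gra^\vee\rangle\leq\sum_{\gra}|\langle\grg,\gra^\vee\rangle|\leq4$. Finally, since the system is reduced the only roots proportional to some $\gra\in S$ are $\pm\gra$, and if $\grg\in S\cup(-S)$ then orthogonality forces $\grg(h_S)=\pm2$; thus $\height(e_S)\leq4$ in every case.

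I expect the genuine obstacle to be exactly the last paragraph: one cannot bound the linear sum $\sum\langle\grg,\gra^\vee\rangle$ directly, and the passage from the quadratic bound $\sum\langle\grg,\gra^\vee\rangle\langle\gra,\grg^\vee\rangle\leq4$ to the linear one relies on the integrality of the Cartan pairings together with the degenerate case $\grg\in S\cup(-S)$ being handled separately (there a factor equals $\pm2$ and the non-proportionality estimate is unavailable). Everything else—the commutation relations of the triples and the reduction of the height to a weight maximum—is routine root-system bookkeeping.
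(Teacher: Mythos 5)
Your proof is correct, and while the first half (assembling the commuting $\mathfrak{sl}_2$-triples into $\{e_S,h_S,f_S\}$ and reducing the height to $\max_{\grg\in\Phi}\grg(h_S)=\max_\grg\sum_{\gra\in S}\langle\grg,\gra^\vee\rangle$) coincides with the paper's, your proof of the key inequality is genuinely different. The paper bounds $\sum_{\gra\in S_0}\langle\grg,\gra^\vee\rangle$ (where $S_0$ is the set of $\gra\in S$ pairing positively with $\grg$) by invoking \cite[Lemma 5.2]{GMP}: the set $S_0\cup\{-\grg\}$ yields a generalized Cartan matrix of finite or affine type, and the classification of such matrices caps the degree of the node $\grg$ in the associated Dynkin diagram at $4$. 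You instead use Bessel's inequality for the projection of $\grg$ onto the span of the orthogonal set $S$, which gives the quadratic bound $\sum_{\gra\in S}\langle\grg,\gra^\vee\rangle\langle\gra,\grg^\vee\rangle\leq 4$, and then pass to the linear sum via integrality of the Cartan pairings, treating $\grg\in S\cup(-S)$ separately. Your route is more elementary and self-contained (no appeal to the classification or to the external lemma), and the intermediate quadratic bound is sharp enough that it also streamlines arguments like the one in Remark \ref{oss:strongly-orth-AC}; the paper's route has the advantage of exposing the structural fact that $S_0\cup\{-\grg\}$ spans a finite or affine subsystem, which is the form in which this configuration is reused in \cite{GMP}. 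Your parenthetical flag about excluding opposite roots from strongly orthogonal sets is well taken: under the paper's literal definition $\{\gra,-\gra\}$ is strongly orthogonal yet $e_\gra+e_{-\gra}$ is semisimple, so the convention you state is tacitly assumed in the paper's proof as well (it is needed already to assert that $\{e_S,h_S,f_S\}$ is an $\mathfrak{sl}_2$-triple).
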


\begin{proof}
Suppose that $S$ is nonempty. Since the roots in $S$ are pairwise strongly orthogonal, it follows that $\{e_S, h_S, f_S\}$ is an $\mathfrak{sl}_2$-triple, with semisimple element $h_S$. Thus $e_S$ and $f_S$ are nilpotent.

Let $\grg \in \Phi$ be such that $\grg (h_S) = \height(e_S)$, and denote $S_0 = \{\gra \in S \; | \; \langle \grg, \gra^\vee \rangle > 0\}$. Then
\begin{equation}	\label{eq:height}
	\height(e_S) = \grg(h_S) = \sum_{\gra \in S} \langle \grg,\gra^\vee \rangle \leq \sum_{\gra \in S_0} \langle \grg, \gra^\vee \rangle.
\end{equation}

Set $\wt S_0 = S_0 \cup \{-\grg\}$, then by \cite[Lemma 5.2]{GMP} the matrix $(\langle \gra, \grb^\vee \rangle )_{\gra, \grb \in \wt S_0}$ is a generalized Cartan matrix of finite or affine type. Notice that every node of the corresponding Dynkin diagram is connected with the node associated to $\grg$, and that by construction the degree of $\grg$ as a vertex of this Dynkin diagram is $\sum_{\gra \in S_0} \langle \grg, \gra^\vee \rangle$. Thus the first claim follows by the classification of the generalized Cartan matrices of finite or affine type, as the maximum number of edges connected to a single node in a Dynkin diagram is at most 4.
\end{proof}

\begin{remark}	\label{oss:strongly-orth-AC}
Suppose that $\Phi$ is of type $A$ or $C$, then $\height(e_S) \leq 2$ for all strongly orthogonal subset $S \subset \Phi$.

To see this, we use the usual $\varepsilon$-notation to describe the roots in these cases: if $\gre_1, \ldots, \gre_n$ is an orthonormal basis in $\mathbb R^n$ with scalar product $(.,.)$. Then
\begin{align*}
	& \Phi = \{\gre_i - \gre_j \; | \; 1 \leq i \neq j \leq n \} \qquad & \text{if $\Phi$ is of type $A_{n-1}$,} \\
	& \Phi = \{\gre_i \pm \gre_j \; | \; 1 \leq i \neq j \leq n \} \cup \{2 \varepsilon_i \; | \; 1 \leq i \leq n\} \qquad & \text{if $\Phi$ is of type $C_n$.}
\end{align*}

If $\grg \in \Phi$ and $\grb_1, \ldots, \grb_k \in \Phi$ are pairwise strongly orthogonal roots such that
$$
	(\grb_i , \grg) > 0 \qquad \forall i = 1, \ldots, k
$$
from the description of $\Phi$ we immediately see that $k \leq 2$. If moreover $k=2$ and $\{\grg, \grb_1, \grb_2\}$ contains roots of different lengths, then $\grg$ must be a short root.

Let $S \subset \Phi$ be a nonempty strongly orthogonal subset. As in the previous proof, let $\grg \in \Phi$ be such that $\grg (h_S) = \height(e_S)$, and denote $S_0 = \{\gra \in S \; | \; \langle \grg, \gra^\vee \rangle > 0\}$. Then it follows that $|S_0| \leq 2$, and moreover we either have $S_0 = \{\grg\}$ or $\langle \grg, \grb^\vee \rangle = 1$ for all $\grb \in S_0$. Thus $\sum_{\gra \in S_0} \langle \grg, \gra^\vee \rangle = 2$, and the claim follows from the inequality \eqref{eq:height}.
\end{remark}

We now want to show that the map $S \mapsto Be_S$ gives an injection
$$
	\{\text{strongly orthogonal subsets of } \Phi\}	\longrightarrow B \backslash \calN.
$$

We now show a fundamental relation between the orbit $Be_S$ and the corresponding affine involution $\grs_{\wh S}$.

\begin{proposition} \label{prop:bruhat-involuzioni}
Let $R,S \subset \Phi$ be strongly orthogonal and suppose that $Be_R \subset \overline{B e_S}$. Then $\grs_{\wh R} \leq \grs_{\wh S}$.
\end{proposition}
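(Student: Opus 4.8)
The plan is to realize the combinatorial datum $\grs_{\wh S}$ geometrically, by mapping (a suitable piece of) the nilpotent cone into the thin affine flag variety of $\wh G$ in such a way that $B$-orbit closures land inside Schubert varieties; the inequality $\grs_{\wh R}\leq\grs_{\wh S}$ then falls out of the fact that Schubert cells specialize according to the Bruhat order. Let $\wh B\subset\wh G$ be the standard Borel attached to $\wh\grD$, let $o=\wh B/\wh B$ be the base point of $\wh G/\wh B$, and consider
$$
\Theta:\gon\longrightarrow \wh G/\wh B,\qquad \Theta(e)=\exp(z^{-1}e)\,o.
$$
I would first record that $\Theta$ is well defined and is a morphism: for $e\in\gon$ one has $z^{-1}e=\sum_\grb c_\grb e_{\grb-\grd}$, a finite sum of \emph{real} root vectors (the coefficient on any toral direction vanishes, so no imaginary root occurs), whence $\exp(z^{-1}e)$ is a genuine element of the minimal Kac--Moody group $\wh G$. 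Moreover $\Theta$ is $B$-equivariant for the action of $B\subset G\subset\wh G$: if $b\in B$ then $b\in\wh B$ fixes $o$, so $\Theta(b\cdot e)=\exp(z^{-1}\operatorname{Ad}(b)e)\,o=b\exp(z^{-1}e)b^{-1}o=b\,\Theta(e)$.

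The crucial computation is to locate $\Theta(e_S)$. Since the roots in $S$ are pairwise strongly orthogonal, the $\mathfrak{sl}_2$-triples $\{e_\gra,h_\gra,f_\gra\}_{\gra\in S}$ commute, so the explicit expression for the affine reflections recalled in Section 1 gives that $\grs_{\wh S}=\prod_{\gra\in S}s_{\gra-\grd}$ is represented in $\wh G$ by
$$
\grs_{\wh S}=\exp(-z f_S)\,\exp(z^{-1}e_S)\,\exp(-z f_S).
$$
Solving for the middle factor yields $\exp(z^{-1}e_S)=\exp(z f_S)\,\grs_{\wh S}\,\exp(z f_S)$. Now $z f_S=\sum_{\gra\in S}e_{\grd-\gra}$ is a sum of positive real root vectors, because $\grd-\gra$ is a positive root for every $\gra\in\Phi$; hence $\exp(z f_S)\in\wh B$ and in particular fixes $o$. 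Therefore $\Theta(e_S)=\exp(z f_S)\,\grs_{\wh S}\,o$ lies in the Schubert cell $C_{\grs_{\wh S}}=\wh B\,\grs_{\wh S}\,o$.

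From here the argument is formal, assuming $S\subset\Phi^+$: then $e_S\in\gon$, and since $\gon$ is $B$-stable we get $\ol{Be_S}\subset\gon$, so $e_R\in\ol{Be_S}$ forces $R\subset\Phi^+$ as well. As $\Theta$ is a $B$-equivariant morphism, $\Theta(Be_S)=B\cdot\Theta(e_S)\subset C_{\grs_{\wh S}}$, and continuity gives
$$
\Theta(\ol{Be_S})\subseteq\ol{\Theta(Be_S)}\subseteq\ol{C_{\grs_{\wh S}}}=X_{\grs_{\wh S}},
$$
the Schubert variety of $\grs_{\wh S}$. Thus $e_R\in\ol{Be_S}$ yields $\Theta(e_R)\in X_{\grs_{\wh S}}$, while the computation above gives $\Theta(e_R)\in C_{\grs_{\wh R}}$. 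Since $X_{\grs_{\wh S}}$ is the disjoint union of the cells $C_w$ with $w\leq\grs_{\wh S}$ (the cell-closure order in a Kac--Moody flag variety being the Bruhat order), this forces $\grs_{\wh R}\leq\grs_{\wh S}$.

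I expect the delicate points to be twofold. The foundational one is making $\Theta$ rigorous as a morphism into the thin ind-variety $\wh G/\wh B$, where Schubert cells and the identification of their closure order with the Bruhat order are available; this is precisely why it is essential to work inside $\gon$, so that only real root vectors — and no imaginary or toral contributions — appear in $z^{-1}e$. The main obstacle is the passage from $S\subset\Phi^+$ to a general strongly orthogonal $S$: then $e_S$ need not lie in $\gon$, and elements of $\ol{Be_S}$ may acquire toral components, so that $\Theta$ would leave the thin flag variety. Handling this requires either a preliminary reduction of the statement to the case $S\subset\Phi^+$, or a variant of $\Theta$ adapted to the completed group. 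I would therefore isolate this as the crux and establish the clean case $S\subset\Phi^+$ first.
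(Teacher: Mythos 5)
Your construction is, up to rephrasing, the paper's own: the authors also shift by $z^{-1}$, use the factorization $\grs_{\wh S}=\exp(-zf_S)\exp(z^{-1}e_S)\exp(-zf_S)$ together with $\exp(-zf_S)\in\wh B$, and conclude by the closure order on Bruhat cells (they work with the double cosets $\wh B\,\grs_{\wh S}\,\wh B$ inside $\wh G$ rather than with Schubert cells in $\wh G/\wh B$, an immaterial difference). The problem is that your proof, as written, establishes only the case $S\subset\Phi^+$, and you explicitly leave the general case open. That case cannot be dismissed: the proposition is applied in the paper to strongly orthogonal sets containing negative roots (to $w(S)$ with $w\in W$ in the proof of Proposition \ref{prop:B-orbits-in-X}, and to $S_0\subset S\cup(-S)$ in the proof of Proposition \ref{prop:strongly-orth-uniqueness}). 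Moreover the reduction you propose does not work: although any strongly orthogonal set can be moved into $\Phi^+$ by some $w\in W$, this replaces $B$ by $w.B$ and $\grs_{\wh S}$ by $w\grs_{\wh S}w^{-1}$, so neither the hypothesis $Be_R\subset\overline{Be_S}$ nor the target inequality is preserved. As it stands, the proposal proves a strictly weaker statement.

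The good news is that the obstruction you single out as the crux is not actually there. Every element $x$ of $\overline{Be_S}$ is nilpotent ($\calN$ is closed and $B$-stable), and for nilpotent $x$ the element $z^{-1}x$ is $\ad$-nilpotent, since $\ad(z^{-1}x)^n=z^{-n}\ad(x)^n$; hence $\exp(z^{-1}x)$ is a polynomial expression and a genuine element of the loop group, even when $x$ has a toral component so that $z^{-1}x$ has a component along the imaginary root $-\grd$. The exponential is therefore defined and $G$-equivariant on all of $\calN\otimes z^{-1}$, not just on $\gou\otimes z^{-1}$, and this is exactly how the paper proceeds. With this observation your argument goes through verbatim for arbitrary strongly orthogonal $R,S\subset\Phi$: note that $\grd-\gra$ is a positive affine root for every $\gra\in\Phi$, not only for $\gra\in\Phi^+$, so $\exp(zf_S)\in\wh B$ requires no positivity assumption on $S$, and likewise $\overline{Be_S}\subset\calN$ replaces your use of $\overline{Be_S}\subset\gon$.
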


\begin{proof}
Notice that the exponential induces a $G$-equivariant morphism
$$
	\exp : \, \gog \otimes z^{-1} \longrightarrow \wh G, \qquad x \otimes z^{-1} \longmapsto \exp(z^{-1}x)
$$

Consider the shifted vectors $e_{\wh S} = e_S \otimes z^{-1}$ and $f_{\wh S} = f_S \otimes z$. Since $\wh S$ is strongly orthogonal, we have
$$	\grs_{\wh S} = \exp(-f_{\wh S}) \exp(e_{\wh S}) \exp(-f_{\wh S}).
$$

Let $\wh B \subset \wh G$ be the Iwahori subgroup defined by $\wh \grD$. Since $\exp(-f_{\wh S}) \in \wh B$, we have
$$\exp(B e_{\wh S}) \subset  \wh B \exp(e_{\wh S}) \wh B = \wh B \exp(-f_{\wh S}) \exp(e_{\wh S}) \exp(-f_{\wh S}) \wh B.
$$
In particular, it follows that $\exp(B e_{\wh S}) \subset \wh B \grs_{\wh S} \wh B$.

Since $\gog \simeq \gog \otimes z^{-1}$ as $G$-modules, the assumption implies that $e_{\wh R} \in \overline{B e_{\wh S}}$. Thus by the previous remark $\exp(e_{\wh R})$ is in the closure of $\wh B \grs_{\wh S} \wh B$. It follows that $\grs_{\wh R}$ is also in the closure of $\wh B \grs_{\wh S} \wh B$, namely $\grs_{\wh R} \leq \grs_{\wh S}$.
\end{proof}

\begin{definition}
We will say that a $B$-orbit $\calO \subset \calN$ is \textit{strongly orthogonal} if  $\calO = B e_S$ for some strongly orthogonal $S \subset \Phi$, in which case we set $\calO = \calO_S$.
\end{definition}

\begin{remark} \
\begin{itemize}
	\item[i)] Notice that the $B$-orbit $\calO_S$ only depends on $S$, and not on the normalization of the root vectors $e_\gra$ in the definition of $e_S$. Since $S$ is orthogonal, we have indeed
$$
	Te_S = \{\sum_{\gra \in S} \xi_\gra e_\gra \; | \; \xi_\gra \in \mk^*\}. 
$$
	\item[ii)] By Proposition \ref{prop:B-orbits-in-X} every strongly orthogonal $B$-orbit has height at most 4, on the other hand we will see in Corollary \ref{cor:height2-is-orthogonal} that every nilpotent $B$-orbit of height 2 is strongly orthogonal. In particular, by Remark \ref{oss:strongly-orth-AC} we see that if $G$ is of type $AC$ then a nonzero $B$-orbit in $\calN$ is strongly orthogonal if and only if it has height 2. If instead $G$ is not of type $AC$, then there do exist strongly orthogonal $B$-orbits of any height between 2 and 4.
	\end{itemize}
\end{remark}

Notice that by  Proposition \ref{prop:bruhat-involuzioni} the assignment $\calO_S \mapsto \grs_{\wh S}$ gives a well defined map
$$
	\{\text{strongly orthogonal $B$-orbits in $\calN$}\} \longrightarrow  \mathcal{I}
$$

In general, it may happen that different strongly orthogonal subsets give rise to the same involution in $\wh W$ (see \cite[Remark 4.4]{GMMP}). On the other hand, different strongly orthogonal subsets always give rise to different nilpotent $B$-orbits. This will be proved as a consequence of a general geometric property satisfied by $Te_S$ as a $T$-orbit inside $Be_S$. We first need a couple of combinatorial lemmas.

\begin{lemma}	\label{lemma: roots-in-lattice}
Let $S \subset \Phi$ be strongly orthogonal, then $\mZ S \cap \Phi = S \cup (-S)$.
\end{lemma}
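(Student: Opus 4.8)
The inclusion $S \cup (-S) \subseteq \mZ S \cap \Phi$ is immediate, so the plan is to prove the reverse: given $\grg = \sum_{\grb \in S} n_\grb \grb \in \Phi$ with $n_\grb \in \mZ$ (the coefficients are well defined, since the pairwise orthogonal roots of $S$ are linearly independent), I will show $\grg \in S \cup (-S)$. First I would bound the coefficients by pairing with coroots: since $\langle \grb', \grb^\vee \rangle = 0$ for distinct $\grb', \grb \in S$, orthogonality gives $\langle \grg, \grb^\vee \rangle = 2 n_\grb$ for every $\grb \in S$. As $\grg$ and $\grb$ are roots of a reduced root system, $\langle \grg, \grb^\vee \rangle$ is a Cartan integer, and an even one can only equal $0$ or $\pm 2$ (the value $\pm 4$ would force $\grg = \pm 2\grb \notin \Phi$). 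Hence $n_\grb \in \{-1, 0, 1\}$ for all $\grb$.

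Next I would control the support $T = \{\grb \in S \; | \; n_\grb \neq 0\}$ using the metric. Orthogonality gives $(\grg, \grg) = \sum_{\grb \in T} (\grb, \grb)$. Assume $|T| \geq 2$. For $\grb \in T$ the Cartan integer $\langle \grb, \grg^\vee \rangle = 2 n_\grb (\grb, \grb)/(\grg, \grg)$ is nonzero, so writing $r_\grb = (\grb, \grb)/(\grg, \grg)$, which satisfies $0 < r_\grb < 1$ precisely because $|T| \geq 2$, the number $2 r_\grb$ is a nonzero integer lying in $(0, 2)$ and therefore equals $1$. Since $\sum_{\grb \in T} r_\grb = 1$, this forces $|T| = 2$. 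This length bookkeeping is the technical heart of the argument, and it is exactly here that strong orthogonality (rather than mere orthogonality) becomes indispensable, as the final step makes clear.

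Finally, if $|T| = 2$, say $T = \{\grb_1, \grb_2\}$ with $n_{\grb_1}, n_{\grb_2} \in \{\pm 1\}$, then $\grg = \pm \grb_1 \pm \grb_2$, so one of $\grb_1 + \grb_2$ or $\grb_1 - \grb_2$ is a root, contradicting the strong orthogonality of $S$. Hence $|T| \leq 1$, and since $\grg \neq 0$ we conclude $|T| = 1$ and $\grg = \pm \grb \in S \cup (-S)$. The main obstacle is not any single hard estimate but ensuring the support cannot exceed $1$: the reduced-root-system configuration that the length computation leaves open (two short roots $\grb_1, \grb_2$ together with a long root $\grg$ of squared length $(\grb_1, \grb_1) + (\grb_2, \grb_2)$, as in type $B_2$) is precisely the one forbidden by strong orthogonality, which is what makes the statement true.
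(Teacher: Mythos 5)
Your proof is correct and follows essentially the same route as the paper's: both reduce, via the length identity $\Vert\grg\Vert^2=\sum n_\grb^2\,\Vert\grb\Vert^2$, to the single dangerous configuration $\grg=\pm\grb_1\pm\grb_2$, which strong orthogonality forbids. The one genuine (minor) improvement is that your integrality argument with $\langle\grb,\grg^\vee\rangle$ handles all length ratios uniformly, so you do not need the separate direct verification in type $G_2$ with which the paper's proof begins.
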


\begin{proof}
The claim is easily checked directly if $\Phi$ is of type $G_2$, so we can assume  that this is not the case. Let $\gra \in {\Phi}$ and suppose that $\gra = \sum a_i \grb_i$ with $\grb_i \in S$ and $a_i$ nonzero integers. By the orthogonality of $S$, we have $||\gra||^2 = \sum a_i^2 ||\grb_i||^2$.Thus the ratio $\tfrac{||\gra||^2}{||\grb_1||^2}$ is either $1$ or $2$. Suppose that $||\gra||^2 = 2||\grb_1||^2$: then it must be $\gra = \pm \grb_1 \pm \grb_2$, which is absurd since $\grb_1$ and $\grb_2$ are strongly orthogonal. Therefore $||\gra||^2 = ||\grb_1||^2$, and it follows that $\gra = \pm \grb_1$.
\end{proof}

\begin{lemma}		\label{lemma:01}
Let $S\subset\Phi$ be orthogonal, and let $S_0 \subset S \cup (-S)$ be such that $\grs_{\wh S_0} = \grs_{\wh S}$. Then $S_0=S$.
\end{lemma}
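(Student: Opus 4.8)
The plan is to work inside the faithful reflection representation of $\wh W$ on $\got^*$ and to read off the claim by separating the linear and translation parts. Recall that for $\gra \in \Phi$ and $n \in \mZ$ the affine reflection acts on $x \in \got^*$ by $s_{\gra + n\grd}(x) = s_\gra(x) - n\gra = x - (\langle x,\gra^\vee\rangle + n)\gra$, and that this action of $\wh W$ is faithful; hence it suffices to compare the affine transformations defined by $\grs_{\wh S}$ and $\grs_{\wh S_0}$.

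First I would constrain $S_0$. In order for $\grs_{\wh S_0}$ to be defined, $\wh S_0$ must be a set of pairwise orthogonal roots, and since $(\gra - \grd,\, -\gra - \grd) = -(\gra,\gra) \neq 0$ the set $S_0$ cannot contain both $\gra$ and $-\gra$ for any $\gra$. Thus $S_0 = \{\gre_\gra \gra \mid \gra \in T\}$ for some subset $T \subseteq S$ and signs $\gre_\gra \in \{\pm 1\}$.

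Next, taking $n = -1$ in the formula above, the generator $s_{\gre_\gra\gra - \grd}$ is the affine map $x \mapsto s_\gra(x) + \gre_\gra\gra$ (using $s_{\gre_\gra\gra} = s_\gra$). Because the roots of $S$ are pairwise orthogonal, each $s_\gra$ fixes every other root of $S$, so these commuting maps compose to $\grs_{\wh S_0}\colon x \mapsto w_T(x) + \sum_{\gra\in T}\gre_\gra\gra$, where $w_T = \prod_{\gra\in T}s_\gra$; in the same way $\grs_{\wh S}\colon x \mapsto w_S(x) + \sum_{\gra\in S}\gra$, with $w_S = \prod_{\gra\in S}s_\gra$.

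Finally I would equate the two affine transformations. Comparing linear parts gives $w_T = w_S$; since $w_S(\gra) = -\gra$ for every $\gra \in S$ while $w_T(\gra) = \gra$ for $\gra \in S \setminus T$, this forces $T = S$. Comparing translation parts then gives $\sum_{\gra\in S}\gre_\gra\gra = \sum_{\gra\in S}\gra$, and as the pairwise orthogonal roots of $S$ are linearly independent we conclude $\gre_\gra = 1$ for all $\gra$, i.e.\ $S_0 = S$. The argument is insensitive to the sign convention in the affine reflection formula, since the same convention governs both products; the only slightly delicate points are the exclusion of antipodal pairs from $S_0$ and the translation bookkeeping, after which the conclusion is immediate from the linear independence of $S$.
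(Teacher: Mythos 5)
Your proof is correct. It rests on the same basic device as the paper's argument --- realizing $\grs_{\wh S}$ and $\grs_{\wh S_0}$ as affine transformations of the Cartan subalgebra and comparing them --- but the comparison is carried out differently. The paper cancels the common reflections, reduces to an identity between two products of affine reflections, and then evaluates both sides at a single well-chosen vector $y$ to force $\sum \grb_j^\vee = 0$; you instead decompose each product globally into its linear and translation parts. This buys you something concrete: the linear-part comparison $w_T = w_S$ already forces every root of $S$ to occur in $S_0$ up to sign, which in particular rules out the degenerate case where $S_0$ is a proper subset of $S$ with no sign changes --- a case in which the paper's test vector is fixed by both sides and its displayed sum is empty, so that case is excluded only implicitly there. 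The translation-part comparison then kills the signs via linear independence of pairwise orthogonal roots, exactly parallel to the paper's use of $\sum \grb_j^\vee \neq 0$. Two minor points, neither a gap: faithfulness of the affine action is not actually needed in the direction you use it (equal group elements trivially act equally), and in the standard level-one realization the translation attached to $s_{\gra-\grd}$ is a multiple of $\gra^\vee$ rather than of $\gra$; as you observe, this normalization is immaterial since pairwise orthogonal coroots are just as linearly independent.
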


\begin{proof}
Write  $S=\{\beta_1,\ldots\beta_k\}$ and $S_0=\{\beta_1, \ldots, \beta_h, -\beta_{h+1}, \ldots, -\beta_{h+s},\ldots, -\beta_r\}$ with $h\leq h+s \leq r \leq k$ and with $\beta_{h+j} \in \Phi^+$ if and only if $1\leq j\leq s$. By the assumption we have
$$\prod_{j=h+1}^k s_{\delta-\beta_j}=\prod_{j=h+1}^r s_{\delta+\beta_j}=\prod_{j=h+1}^{h+s} s_{\delta+\beta_j}\prod_{j=h+s+1}^r s_{\delta+\beta_j}$$

By looking at the affine action of both sides on $\got$, we show that this equality cannot hold unless $h=k$. Let $s_{\gra,k} \in \mathrm{Aff}(\got)$ be the affine reflection defined by $s_{\gra,k}(x)=x- (\gra(x) -k)\gra^\vee$, then we have an isomorphism of $\widehat W$ onto the subgroup of $\mathrm{Aff}(\got)$ generated by the $s_{\gra,k}$ mapping the reflection $s_{k \delta \pm\gra}$ (for $\alpha\in\Phi^+$) to $s_{\gra,\mp k}$. Assume  $h<k$, and  choose $y\in \got$ such that $\beta_j(y) = -1$ for $h < j \leq r$ and $\beta_j(y)=1$ for $j>r$. Then $y$ is fixed by $\prod_{j=h+1}^r s_{\delta+\beta_j}$, whereas
$$
\Big( \prod_{j=h+1}^k s_{\delta-\beta_j} \Big) (y) = \Big( \prod_{j=h+1}^r s_{\delta-\beta_j} \Big) (y) = y+2\sum_{j=h+1}^r\beta^\vee_j.
$$
Therefore $\sum_{j=h+1}^r\beta^\vee_j = 0$, which is absurd because the roots are orthogonal.
\end{proof}

If $Z$ is an algebraic variety acted by a connected solvable algebraic group $K$, we can associate to $Z$ a sublattice of the character group $\calX(K)$, called the weight lattice of $Z$ and defined as follows:
$$\calX_K(Z) = \{ \text{weights of rational $K$-eigenfunctions $f\in\mk(Z)$}\}.$$
Recall that a base point $e_0 \in Be_S$ is called $T$-\textit{standard} if $\calX_B(Be_S) = \calX_T(Be_0)$, equivalently if $\Stab_T(e_0)$ is a maximal diagonalizable subgroup of $\Stab_B(e_0)$ (see e.g. \cite[Section 4.1]{GM}). 

\begin{proposition} 	\label{prop:strongly-orth-uniqueness}
Let $S \subset \Phi$ be strongly orthogonal. Then $e_0 \in B e_S$ is a $T$-standard base point if and only if $e_0 \in T e_S$.
In particular, the map
$$
	\{\text{strongly orthogonal subsets of } \Phi\}	\longrightarrow B \backslash \calN
$$
is injective.
\end{proposition}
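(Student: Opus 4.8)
The plan is to prove that $e_S$ is itself a $T$-standard base point, and then to bootstrap this single computation to the full equivalence and to injectivity. Being $T$-standard is a $T$-invariant condition (for $t\in T$ one has $\Stab_T(te_0)=\Stab_T(e_0)$ and $\Stab_B(te_0)=t.\Stab_B(e_0)$), so once $e_S$ is known to be $T$-standard the whole orbit $Te_S$ consists of $T$-standard points. Throughout I would use the inclusion $\calX_B(Be_S)\subseteq\calX_T(Te_S)$, which holds because a $B$-eigenfunction is nowhere vanishing on the single orbit $Be_S$ and hence restricts to a nonzero $T$-eigenfunction of the same weight on $Te_S$; since $S$ is orthogonal, hence linearly independent, one has $\calX_T(Te_S)=\mZ S$, so that $\calX_B(Be_S)\subseteq\mZ S$.

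Write $U=B^u$ for the unipotent radical, $\pi\colon B\to B/U=T$ for the projection, and set $\gog_S=\bigoplus_{\gra\in S}\gog_\gra$. Since $U\cap\Stab_B(e_S)$ is the unipotent radical of $\Stab_B(e_S)$, with diagonalizable quotient $\pi(\Stab_B(e_S))$, the subgroup $\Stab_T(e_S)$ is a maximal diagonalizable subgroup of $\Stab_B(e_S)$ if and only if $\pi(\Stab_B(e_S))=\Stab_T(e_S)$, i.e.\ if and only if $Te_S\cap Ue_S=\{e_S\}$. The heart of the matter is thus the transversality of $\gog_S$ to the $U$-orbit of $e_S$, whose infinitesimal version is transparent: for $\grb\neq\gra$ in $S$ strong orthogonality gives $\gra-\grb\notin\Phi$, so the $\gog_\gra$-component of $[\gou,e_S]$ vanishes for every $\gra\in S$, whence $[\gou,e_S]\cap\gog_S=0$. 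Writing a stabilizing element $x=h+n\in\got\oplus\gou$ and projecting the equation $[x,e_S]=0$ onto $\gog_S$ then forces $\gra(h)=0$ for all $\gra\in S$ together with $[n,e_S]=0$, so that $\Lie\Stab_B(e_S)=\got^{e_S}\oplus\gou^{e_S}$ with toral part exactly $\Lie\Stab_T(e_S)$. The main obstacle is to promote this infinitesimal statement to the group-level identity $Ue_S\cap\gog_S=\{e_S\}$: given $u\cdot e_S=\sum_{\gra\in S}\grl_\gra e_\gra$, one orders $S$ by the dominance order and argues by induction, using repeatedly that $\gra-\grb\notin\Phi$ forces any transfer of mass between the spaces $\gog_\gra$ ($\gra\in S$) to pass through root spaces outside $\gog_S$, whose imposed vanishing cancels the off-diagonal contributions and yields $\grl_\gra=1$ and $u\cdot e_S=e_S$. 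This promotion is the step I expect to be the most delicate.

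Granting $Ue_S\cap\gog_S=\{e_S\}$, one obtains $\pi(\Stab_B(e_S))=\Stab_T(e_S)$, so $e_S$ — and hence all of $Te_S$ — is $T$-standard, and moreover $\calX_B(Be_S)=\Stab_T(e_S)^\perp=\mZ S$. For the converse, let $e_0\in Be_S$ be $T$-standard, so that $\calX_T(Te_0)=\calX_B(Be_S)=\mZ S$. The lattice $\calX_T(Te_0)$ is generated by the roots occurring in $\supp(e_0)$; these therefore lie in $\mZ S\cap\Phi=S\cup(-S)$ by Lemma~\ref{lemma: roots-in-lattice}. Since $e_0\in Be_S$ and $B$ preserves the dominance filtration of $\gog$, every root in $\supp(e_0)$ dominates an element of $S$; as $e_S\in\gou$ this rules out the roots of $-S$ and gives $\supp(e_0)\subseteq S$. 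Finally a subset of the linearly independent set $S$ that generates $\mZ S$ must be all of $S$, so $\supp(e_0)=S$ with all coefficients nonzero, i.e.\ $e_0\in Te_S$. This proves the equivalence.

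For the injectivity, suppose $Be_R=Be_S$ with $R,S\subseteq\Phi$ strongly orthogonal. Applying the equivalence to $R$, the point $e_R\in Te_R$ is a $T$-standard base point of $Be_R=Be_S$; applying the equivalence to $S$ then gives $e_R\in Te_S$, so that $e_R=\sum_{\gra\in S}\grl_\gra e_\gra$ with all $\grl_\gra\neq0$. Comparing the supports of the two expressions for $e_R$ yields $R=\supp(e_R)=S$, as desired.
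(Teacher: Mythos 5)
Your proposal has two genuine gaps, and both occur exactly where the paper deploys machinery that your argument omits.

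First, the claim that $e_S$ itself is a $T$-standard base point --- which you reduce to the group-level identity $Ue_S\cap\gog_S=\{e_S\}$ --- is never actually established. Your infinitesimal computation ($[\gou,e_S]$ has zero projection onto $\gog_S$ by strong orthogonality) is correct but only controls $\Lie\Stab_B(e_S)$, and you explicitly defer the promotion to the group level, which is precisely the hard combinatorial content: already in type $B_2$ with $S=\{\gre_1+\gre_2,\gre_1-\gre_2\}$ one has $\gre_1+\gre_2=(\gre_1-\gre_2)+\gre_2+\gre_2$, so $U$ genuinely transfers mass between the root spaces of $S$ through an intermediate root space, and ruling out cancellations among several root subgroups requires a careful induction that you only gesture at. The paper avoids this entirely: it never verifies directly that any particular point is $T$-standard, but instead invokes the general existence of $T$-standard base points in a $B$-orbit (from \cite[Section 4.1]{GM}) and then identifies where they must lie; the ``if'' direction of the equivalence then follows because $T$-standardness is a $T$-invariant condition.

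Second, your resolution of the sign ambiguity $\supp(e_0)\subset S\cup(-S)$ rests on the phrase ``as $e_S\in\gou$'', i.e.\ on the assumption $S\subset\Phi^+$. The proposition is stated, and is used in the paper (e.g.\ in Proposition \ref{prop:B-orbits-in-X}, for sets of the form $w(S)$ with $w\in W$), for arbitrary strongly orthogonal $S\subset\Phi$, which may well contain negative roots; your dominance argument then breaks down. The paper resolves the signs by a completely different mechanism, and this is where the affine Weyl group enters: from $Be_{S_0}=Be_S$ one gets $\grs_{\wh S_0}=\grs_{\wh S}$ by Proposition \ref{prop:bruhat-involuzioni}, and Lemma \ref{lemma:01} (a computation with the affine action on $\got$) then forces $S_0=S$. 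Your shared starting point --- the weight-lattice inclusion $\calX_B(Be_S)\subset\calX_T(Te_S)=\mZ S$ and Lemma \ref{lemma: roots-in-lattice} --- is the same as the paper's, but without these two ingredients the proof is incomplete.
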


\begin{proof}
We adapt the proof of \cite[Proposition 4.5]{GM} to this slightly different context. We keep the notation therein. Let $e_0 \in B e_S$ be a $T$-standard base point and denote $S_0 = \supp(e_0)$,
then $\calX_B(B e_S) = \calX_T(Te_0) = \mZ S_0$ (see Lemma 4.4 in \cite{GM}). The restriction of functions gives an inclusion $\calX_B(B e_S) \subset \calX_T(Te_S) = \mZ S$, thus by Lemma \ref{lemma: roots-in-lattice} we get $S_0 \subset S \cup (-S)$. On the other hand, since $B e_{S_0} = B e_S$, by Proposition \ref{prop:bruhat-involuzioni} we have $\sigma_{\wh S_0}=\sigma_{\wh S}$. Therefore $S_0 = S$ thanks to Lemma \ref{lemma:01}.
\end{proof}

As we already noticed, it may happen that different strongly orthogonal subsets give rise to the same involution in $\wh \calI$. The situation gets better if we consider strongly orthogonal subsets of height at most 3:  in this case the strongly orthogonal subset (hence the strongly orthogonal nilpotent $B$-orbits) are separated by the associated involutions. First we need a combinatorial lemma.

\begin{lemma}	[{\cite[Proposition 4.1]{GMMP}}] \label{lemma:radici-attive-reali}
Let $S \subset \Phi$ be strongly orthogonal with $\height(e_S) \leq 3$ and let $\gra \in \widehat \Phi$ be such that $\grs_{\wh S}(\gra) = - \gra$.
Then $\gra=\tfrac{1}{2} (\pm\grb\pm\grb')$ with $\grb,\grb'\in \wh S$.
\end{lemma}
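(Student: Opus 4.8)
The plan is to translate the condition $\grs_{\wh S}(\gra)=-\gra$ from the affine group to the finite root system, and then to classify the relevant finite roots by means of the generalized Cartan matrix already exploited in the proof of Proposition~\ref{prop:nilpotent}.

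First I would dispose of the imaginary roots: since $s_\grb(\grd)=\grd$ for every $\grb\in\wh S$, the element $\grs_{\wh S}$ fixes $\grd$, so no root $\pm n\grd$ can satisfy $\grs_{\wh S}(\gra)=-\gra$. Thus I may assume $\gra=\psi+m\grd$ with $\psi\in\Phi$ and $m\in\mZ$. Writing $S=\{\gamma_1,\dots,\gamma_k\}$ and $n_i=\langle\psi,\gamma_i^\vee\rangle$, a direct computation (using $(\psi+m\grd,\gamma_i-\grd)=(\psi,\gamma_i)$ and the orthogonality of the $\gamma_i$) gives
\[
\grs_{\wh S}(\psi+m\grd)=\grs_S(\psi)+\big(m+\psi(h_S)\big)\grd ,
\]
where $\grs_S=\prod_i s_{\gamma_i}$ and $\psi(h_S)=\sum_i n_i$. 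Hence $\grs_{\wh S}(\gra)=-\gra$ is equivalent to $\grs_S(\psi)=-\psi$ together with $2m=-\psi(h_S)$; the latter only fixes $m$, so the problem reduces to describing the roots $\psi$ with $\grs_S(\psi)=-\psi$. Since the $\gamma_i$ are orthogonal of nonzero length, the $(-1)$-eigenspace of $\grs_S$ is exactly $\Span S$, so $\psi\in\Span S\cap\Phi$, and pairing with $\gamma_i^\vee$ gives $\psi=\tfrac12\sum_i n_i\gamma_i$ with $n_i\in\mZ$. Applying the reflections $s_{\gamma_i}$ (which commute with $\grs_S$, preserve $\Span S\cap\Phi$, and only flip the signs of the $\gamma_i$, hence preserve the target shape) I may assume $n_i\ge 0$; set $S_0=\{\gamma_i:n_i>0\}$, so that $2\psi=\sum_{\gamma\in S_0}n_\gamma\gamma$ and, by the height hypothesis, $\sum_{\gamma\in S_0}n_\gamma=\psi(h_S)\le\height(e_S)\le 3$.

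The core step is to classify $S_0$. By \cite[Lemma 5.2]{GMP} (exactly as in the proof of Proposition~\ref{prop:nilpotent}), the set $\{-\psi\}\cup S_0$ carries a generalized Cartan matrix $M$ of finite or affine type, whose Dynkin diagram — as $S_0$ is pairwise orthogonal — is a star with centre $-\psi$ and leaves $S_0$. The relation $2\psi=\sum_\gamma n_\gamma\gamma$ is a nontrivial linear dependence, so $M$ cannot be of finite type and is therefore affine. If this dependence is imprimitive, then all $n_\gamma$ are even, $\psi\in\mZ S$, and Lemma~\ref{lemma: roots-in-lattice} forces $\psi\in S\cup(-S)$, i.e. $|S_0|=1$. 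Otherwise $(2,(n_\gamma))$ is the primitive null vector of $M$, so the centre $-\psi$ carries mark $2$ while the leaves carry marks $n_\gamma$ summing to at most $3$. Inspecting the affine diagrams that are stars centred at a mark-$2$ node with pairwise non-adjacent leaves, the only possibility with leaf-marks summing to $\le 3$ is $\wt{C}_2$ (leaf-marks $1,1$); the diagrams $\wt{G}_2$, $\wt{B}_3$, $\wt{D}_4$ all have leaf-marks summing to $4$ and are excluded, and no affine diagram has a node of degree $\ge 5$. Thus $|S_0|\le 2$, giving $\psi=\gamma$ or $\psi=\tfrac12(\gamma+\gamma')$; undoing the sign changes yields $\psi\in\{\pm\gamma\}\cup\{\tfrac12(\pm\gamma\pm\gamma')\}$, and recalling $2m=-\psi(h_S)$ this translates into exactly $\gra=\tfrac12(\pm\grb\pm\grb')$ with $\grb,\grb'\in\wh S$.

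The delicate point — and the place where the hypothesis $\height(e_S)\le 3$ enters essentially — is this final classification: the forbidden stars $\wt{G}_2,\wt{B}_3,\wt{D}_4$ are separated from the admissible $\wt{C}_2$ (and from the rank-one case) precisely by the fact that their marks sum to $4$ rather than to $\le 3$. A secondary technical point is the primitivity of the dependence $2\psi=\sum_\gamma n_\gamma\gamma$, which is what guarantees that $-\psi$ occupies a mark-$2$ node and which is dispatched by Lemma~\ref{lemma: roots-in-lattice}.
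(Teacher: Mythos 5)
Your reduction to the finite root system is correct and cleanly done: the formula $\grs_{\wh S}(\psi+m\grd)=\grs_S(\psi)+(m+\psi(h_S))\grd$, the identification of the $(-1)$-eigenspace with $\Span S$, the sign normalization, and the bound $\sum_{\grg\in S_0}n_\grg=\psi(h_S)\leq 3$ are all fine, and this is genuinely different from the paper's route (which never leaves the affine root system). The gap is in the final classification of the affine generalized Cartan matrix. The matrix produced by \cite[Lemma 5.2]{GMP} may be of \emph{twisted} affine type, and your inspection only runs over the untwisted diagrams $\wt C_2,\wt G_2,\wt B_3,\wt D_4$. Among the twisted tables there are stars centred at a mark-$2$ node whose leaf-marks sum to exactly $3$: the diagram $A_4^{(2)}$ (marks $2,2,1$ along a path, centre of mark $2$, leaves of marks $2$ and $1$) and, worse, $A_5^{(2)}$ (marks $1,1,2,1$, centre of mark $2$ with three pairwise non-adjacent leaves of marks $1,1,1$). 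The first can be dismissed because $A_4^{(2)}$ requires three distinct root lengths among $\{-\psi\}\cup S_0$, which cannot happen inside $\Phi$; but the second has only two root lengths and is not excluded by anything you say. If it were realizable it would give $2\psi=\grg_1+\grg_2+\grg_3$ with $\sum n_\grg=3$, directly contradicting the statement, so your argument as written does not close.

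The missing ingredient is exactly a parity statement: one needs to know that $\sum_{\grg}n_\grg$ is \emph{even}, which kills the odd-sum star $A_5^{(2)}$ (and, together with $\sum n_\grg\leq 3$, forces $\sum n_\grg=2$ with no diagram inspection at all). This is how the paper proceeds: writing $2\gra=\sum_{\grb\in S}\langle\gra,\grb^\vee\rangle(\grb-\grd)$ and reflecting $\gra$ by $\grs_{\wh S'}$ for $S'=\{\grb\in S\mid\langle\gra,\grb^\vee\rangle<0\}$, it observes that $\grs_{\wh S'}(\gra)=\tfrac12\sum_{\grb\in S}|\langle\gra,\grb^\vee\rangle|(\grb-\grd)$ is a \emph{real affine root}, so its $\grd$-coefficient $-\tfrac12\sum|\langle\gra,\grb^\vee\rangle|$ is an integer; combined with the height bound $\langle\grs_{S'}(\gra),h_S\rangle\leq 3$ this gives that the sum is a positive even number $<3$, hence equal to $2$. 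You could graft this parity observation onto your setup (the integrality of $m$ in $\gra=\psi+m\grd$, i.e. $2m=-\psi(h_S)\in 2\mZ$, is precisely the statement that $\sum n_\grg$ is even once the signs are normalized — note you already wrote down this relation without exploiting it), after which the whole affine-diagram case analysis becomes unnecessary. As it stands, however, the classification step is incomplete.
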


\begin{proof}
Since $\grs_{\wh S}(\gra) = -\gra$, we have
\begin{equation}	\label{eq:2alpha}
2\gra = \sum_{\grb\in S}\langle \gra,\grb^\vee\rangle (\grb - \grd).
\end{equation}
Thus the claim follows if we show that $\sum_{\beta\in S}|\langle \gra,\grb^\vee\rangle|= 2$. By the assumption this sum is clearly positive, hence it suffices to  show that it is an even number smaller than 3.

By the assumption on the height of $e_S$ we have $\langle \eta, h_S \rangle \leq 3$ for all $\eta\in\Phi$, thus for all $\eta \in \wh \Phi_{\mathrm{Re}}$. Set $S'=\{\grb\in S\mid \langle \gra,\grb^\vee\rangle< 0\}$, taking $\eta = \grs_{\wh S'}(\gra)$ we get 
$$
	\sum_{\beta\in S}|\langle \gra,\grb^\vee\rangle| = \sum_{\grb\in S}\langle \grs_{\wh S'}(\gra),\grb^\vee\rangle = \langle \grs_{S'}(\alpha), h_S \rangle \le 3.
$$
On the other hand, by \eqref{eq:2alpha} we have
$$
	\grs_{\wh S'}(\alpha) = \gra - \sum_{\grb \in S'} \langle \gra,\grb^\vee\rangle (\grb -\grd) =  \frac{1}{2}\sum_{\grb\in S}| \langle \gra,\grb^\vee\rangle | (\grb - \grd).
$$
Therefore $\sum_{\beta\in S}|\langle \gra,\grb^\vee\rangle| = - 2 \langle \grs_{\wh S'}(\gra), d \rangle < 3$ is a positive even number.
\end{proof}

The following proposition is essentially taken from \cite{GMMP}(see Proposition 4.3 therein).

\begin{proposition} \label{prop:iniettivita-involuzioni}
Let $S \subset \Phi$ be strongly orthogonal with $\height(e_S) \leq 3$ and let $R\subset \Phi$ be an orthogonal subset such that $\grs_{\wh R} = \grs_{\wh S}$, then $R = S$.
\end{proposition}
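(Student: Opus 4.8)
The plan is to reduce everything to a combinatorial statement about orthogonal bases of a single eigenspace. First I would record the two basic facts that make the reduction work. Since $\grd$ is orthogonal to $\Phi$ and isotropic, for $\gra, \gra' \in \Phi$ one has $(\gra - \grd, \gra' - \grd) = (\gra, \gra')$; hence $\wh R$ is an orthogonal set because $R$ is, and likewise for $\wh S$. Consequently, for every $\eta \in \wh R$ the factors $s_{\eta'}$ with $\eta' \in \wh R \setminus \{\eta\}$ fix $\eta$, so that $\grs_{\wh R}(\eta) = -\eta$; using $\grs_{\wh R} = \grs_{\wh S}$ this says $\grs_{\wh S}(\eta) = -\eta$. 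Finally, since $R$ and $S$ are orthogonal, $\card(R) = \rk(\mathrm{id} - \grs_{\wh R}) = \rk(\mathrm{id} - \grs_{\wh S}) = \card(S)$; write $k$ for this common cardinality and $\wh S = \{\grb_1, \ldots, \grb_k\}$.

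The next step is to pin down the shape of the elements of $\wh R$. Fix $\eta \in \wh R$. By the previous paragraph $\grs_{\wh S}(\eta) = -\eta$, so Lemma \ref{lemma:radici-attive-reali} applies (this is the only place the hypothesis $\height(e_S) \leq 3$ enters) and yields $\eta = \tfrac12(\pm \grb \pm \grb')$ with $\grb, \grb' \in \wh S$. Now I would extract the coefficient of $\grd$: every element of $\wh R$ and of $\wh S$ has $\grd$-coefficient equal to $-1$, and a sign combination $\tfrac12(\pm\grb \pm\grb')$ has $\grd$-coefficient in $\{-1, 0, 1\}$, equal to $-1$ only when both signs are $+$. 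Hence $\eta = \tfrac12(\grb + \grb')$ with $\grb, \grb' \in \wh S$; when $\grb = \grb'$ this means simply $\eta = \grb \in \wh S$.

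Finally I would run the combinatorial argument. Writing each $\eta_i \in \wh R$ as $\tfrac12(\grb_{a_i} + \grb_{b_i})$, I associate to it the support $T_i = \{a_i, b_i\} \subseteq \{1, \ldots, k\}$, a nonempty set of size at most $2$; in the basis $\grb_1, \ldots, \grb_k$ its coordinates are non-negative. Since the form is positive definite on $\Span(\wh S)$ and diagonal in this orthogonal basis, $(\eta_i, \eta_j)$ is a sum of non-negative terms, so the orthogonality of $\wh R$ forces $T_i \cap T_j = \vuoto$ for $i \neq j$. Thus $T_1, \ldots, T_k$ are $k$ pairwise disjoint nonempty subsets of a $k$-element set, which is possible only if every $T_i$ is a singleton and $i \mapsto a_i$ is a bijection. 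Therefore each $\eta_i$ equals some $\grb_{a_i}$ and $\wh R = \wh S$, whence $R = S$. The one genuinely nontrivial point is the passage through Lemma \ref{lemma:radici-attive-reali}: the half-sum roots $\tfrac12(\grb + \grb')$ are honest roots that a priori could enter $\wh R$, and it is precisely the combination of the orthogonality of $\wh R$ with the equality $\card(R) = \card(S)$ that forces all of them to degenerate back into elements of $\wh S$.
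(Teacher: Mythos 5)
Your proof is correct and follows essentially the same route as the paper's: both hinge on Lemma \ref{lemma:radici-attive-reali} to write each element of $\wh R$ as a half-sum $\tfrac12(\grb+\grb')$ of elements of $\wh S$, and then exploit the orthogonality of $\wh R$ together with the equality $\card(R)=\card(S)$ to force each such half-sum to degenerate to an element of $\wh S$. The only cosmetic difference is that you run a direct disjoint-support counting argument over all of $\wh R$, whereas the paper argues by contradiction from a single element of $\wh R \setminus \wh S$ and a comparison of spans.
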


\begin{proof}
If $\gra \in \wh R$, then 
$\grs_{\wh R}(\gra) = \grs_{\wh S}(\gra) = -\gra$, hence $2\gra = \sum_{\grb\in S}\langle \gra,\grb^\vee\rangle (\grb-\grd)$ and
$\gra\in \Span_\mQ \wh S$. Therefore $\wh R \subset \Span_\mQ \wh S$, and switching the role of $S$ and 
$R$ we obtain $ \Span_\mQ \wh R = \Span_\mQ \wh S$. Assume now that $R \ne S$ and set $S=\{\grb_1,\ldots,\grb_k\}$. 
Let $\gra\in \wh R \smallsetminus \wh S$, then by Lemma \ref{lemma:radici-attive-reali} we can write $\gra = \tfrac 12 (\grb+\grb')$ with $\grb,\grb'$ in $\wh S$. Without loss of generality we can assume that $\gra=\tfrac 12 (\grb_1+\grb_2)$. 
Let $\gra'\in \wh R \smallsetminus \{\gra\}$, and write $\gra'=\sum a_i\grb_i$. Since $\gra$ and $\gra'$ are orthogonal, it follows that $a_1\Vert \grb_1\Vert^2 +a_2\Vert\grb_2\Vert^2=0$, 
hence $a_1=-\frac{ \Vert\grb_2\Vert^2}{ \Vert \grb_1\Vert^2} a_2$. On the other hand, by Lemma \ref{lemma:radici-attive-reali} we can write $\gra'=\tfrac 12(\grb_i+\grb_j)$ for some $\grb_i, \grb_j \in \wh S$, and since $\gra' \neq \gra$ we get $a_1=a_2=0$. It follows that $\wh R \smallsetminus \{\gra\}\subset \Span_\mQ(\wh S\smallsetminus\{\grb_1,\grb_2\})$, against the fact that $S$ and 
$R$ span the same space.
\end{proof}

Combining Propositions \ref{prop:bruhat-involuzioni} and \ref{prop:iniettivita-involuzioni} we get the following.

\begin{corollary} \label{cor:iniettivita-involuzioni}
The map
$$
	\{S \subset \Phi \; | \; S \text{ is strongly orthogonal and $\height(e_S) \leq 3$} \}	\longrightarrow  \wh \calI
$$
is injective.
\end{corollary}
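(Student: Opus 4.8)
The plan is to deduce the corollary as a direct combination of the two preceding propositions, so that no genuinely new argument is required at this stage. Concretely, I would take two subsets $R,S$ in the indicated domain---both strongly orthogonal with $\height(e_R)\leq 3$ and $\height(e_S)\leq 3$---and assume that they have the same image, $\grs_{\wh R}=\grs_{\wh S}$; the goal is then to conclude $R=S$.

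The key observation is that a strongly orthogonal set is in particular orthogonal, so $R$ qualifies as the arbitrary orthogonal subset allowed in the hypothesis of Proposition \ref{prop:iniettivita-involuzioni}, while $S$ supplies the distinguished strongly orthogonal subset of height at most $3$ required there. Applying Proposition \ref{prop:iniettivita-involuzioni} with these roles then yields $R=S$ at once, which is exactly the asserted injectivity. Since both $R$ and $S$ satisfy the height bound, I am free to run the argument in either direction, so the asymmetry in the hypothesis of Proposition \ref{prop:iniettivita-involuzioni} (only $S$ is required to have height at most $3$) causes no difficulty.

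The role of Proposition \ref{prop:bruhat-involuzioni} in the combination is to justify the well-definedness of the assignment $\calO_S\mapsto\grs_{\wh S}$ at the level of $B$-orbits: if $Be_R=Be_S$ then both inclusions $Be_R\subset\overline{Be_S}$ and $Be_S\subset\overline{Be_R}$ hold, whence $\grs_{\wh R}\leq\grs_{\wh S}$ and $\grs_{\wh S}\leq\grs_{\wh R}$, so that $\grs_{\wh R}=\grs_{\wh S}$. Combined with the injectivity of $S\mapsto\calO_S$ furnished by Proposition \ref{prop:strongly-orth-uniqueness}, this lets one read the corollary equivalently as the statement that distinct strongly orthogonal $B$-orbits of height at most $3$ are separated by their associated affine involutions.

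There is essentially no obstacle at the level of the corollary itself, as all the substantive work has already been carried out: the genuine difficulty lives inside Proposition \ref{prop:iniettivita-involuzioni}, whose proof rests on the structural description in Lemma \ref{lemma:radici-attive-reali} of the affine real roots $\gra$ with $\grs_{\wh S}(\gra)=-\gra$, together with the orthogonality and linear-independence argument ruling out any root in $\wh R\smallsetminus\wh S$. The only point worth verifying explicitly is that the height hypothesis is invoked for both subsets, which is guaranteed here by membership in the stated domain.
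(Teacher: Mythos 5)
Your proposal is correct and follows essentially the same route as the paper, which gives no separate argument and simply states that the corollary is obtained by combining Propositions \ref{prop:bruhat-involuzioni} and \ref{prop:iniettivita-involuzioni}. Your reading is in fact slightly more precise than the paper's phrasing: as you note, the injectivity of $S \mapsto \grs_{\wh S}$ on the stated domain is an immediate application of Proposition \ref{prop:iniettivita-involuzioni} alone (with $R$ playing the role of the orthogonal subset), while Proposition \ref{prop:bruhat-involuzioni} only enters if one wants to reformulate the statement at the level of $B$-orbits.
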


\begin{remark}
Notice that for $\gra\in \mathfrak t^*$ it holds
$$ \grs_{\wh S}(\gra)= \grs_{S}(\gra) +  \gra(h_S) \grd.$$
In particular, the height of $e_S$ is the greatest coefficient of $\grd$ among the roots of shape $\grs_{\wh S}(\gra)$ with $\gra\in\Phi$, and it only depends on $\sigma_{\wh S}$.
\end{remark}

In the following, it will be particularly relevant the case where the strongly orthogonal subset is contained in the set of roots associated to an abelian ideal of $\gob$. Notice that in this case every orthogonal subset is necessarily strongly orthogonal. The following property was noticed in \cite[Theorem 6.5]{GMMP} in the case of an arbitrary abelian ideal of $\gob$. Since no proof was given there, we provide an easy argument suggested by A. Maffei which applies to the context that we will be interested in.

\begin{proposition} \label{prop:bruhat-involuzioni2}
Let $h$ be the dominant characteristic of a nilpotent orbit of height 2, let $\goa = \gog(2,h)$ be the corresponding abelian ideal of $\gob$ and let $\Psi \subset \Phi^+$ be the set of roots of $\goa$. If $S \subset \Psi$ is an orthogonal subset and $\grb \in \Phi$ is such that $s_{\grd-\grb} \leq \grs_{\wh S}$, then $\grb \in \Psi$ as well. 
\end{proposition}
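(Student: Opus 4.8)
The plan is to transport the whole statement to the standard action of $\wh W$ on $\got$ by affine transformations, and then reduce it to the elementary fact that a point lying in the closure of the fundamental alcove has a \emph{standard} parabolic stabilizer.

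First I would realize $\wh W$ inside $\mathrm{Aff}(\got)$ exactly as in the proof of Lemma \ref{lemma:01}, so that $s_{\gra-\grd}$ (for $\gra\in\Phi^+$) becomes the affine reflection $s_{\gra,1}\colon x\mapsto x-(\gra(x)-1)\gra^\vee$ in the hyperplane $\{\gra(x)=1\}$, and likewise $s_{\grd-\grb}=s_{\grb-\grd}$ becomes the reflection in the hyperplane $\{\grb(x)=1\}$ (the mirror depends on $\grb$ only up to sign). Consider the point $x_0=\tfrac12 h$. Since the roots in $S$ lie in $\Psi$ we have $\gra(x_0)=\tfrac12\gra(h)=1$ for every $\gra\in S$, and because $S$ is orthogonal the reflections $s_{\gra,1}$ commute and act in pairwise orthogonal directions $\gra^\vee$, so that
$$\grs_{\wh S}(x)=x-\sum_{\gra\in S}(\gra(x)-1)\gra^\vee.$$
In particular $\grs_{\wh S}$ fixes $x_0$. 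I would then observe that $x_0\in\ol{A_0}$, the closure of the alcove cut out by $\wh\grD$: since $h$ is dominant, $\gra(x_0)\geq0$ for all $\gra\in\grD$, and since the orbit has height $2$ and $h$ is its dominant characteristic, $\theta(h)=2$, whence $\theta(x_0)=1$.

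Next I would invoke the standard description of stabilizers of points of $\ol{A_0}$ (Bourbaki): $\Stab_{\wh W}(x_0)$ is the standard parabolic subgroup $\wh W_J$ generated by the simple reflections in $\wh\grD$ whose walls contain $x_0$, namely $\{s_\gra:\gra\in\grD,\ \gra(x_0)=0\}$ together with $s_{\grd-\theta}$, the last one because $\theta(x_0)=1$. By the previous paragraph $\grs_{\wh S}\in\wh W_J$. I then use that standard parabolic subgroups are downward closed for the Bruhat order: a reduced expression of $\grs_{\wh S}$ involves only generators from $J$ (as $\ell$ restricts to the length function of $\wh W_J$), and by the subword property every element $\leq\grs_{\wh S}$ is again a product of such generators. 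Applying this to the hypothesis $s_{\grd-\grb}\leq\grs_{\wh S}$ gives $s_{\grd-\grb}\in\wh W_J=\Stab_{\wh W}(x_0)$, so $s_{\grd-\grb}$ fixes $x_0$; by the reflection description this means $\grb(x_0)=1$, that is $\grb(h)=2$. Finally $\grb(h)=2>0$ forces $\grb\in\Phi^+$ by dominance of $h$, whence $\grb\in\Phi(2,h)=\Psi$.

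I expect the only delicate points to be bookkeeping rather than substance. The main thing to check is that, under the realization of Lemma \ref{lemma:01}, the simple system $\wh\grD$ corresponds precisely to the walls of $A_0$ (the finite simple reflections to the linear walls $\{\gra=0\}$ and $s_{\grd-\theta}$ to the affine wall $\{\theta=1\}$); this is exactly what makes $\Stab_{\wh W}(x_0)$ a \emph{standard} parabolic and lets the downward-closure argument apply. The remaining sign/normalization check in passing from $s_{\grd-\grb}$ to the mirror $\{\grb(x)=1\}$ for $\grb\in\Phi^+$ and for $\grb\in\Phi^-$ is harmless. All the conceptual content sits in the single observation that $\tfrac12 h\in\ol{A_0}$ is fixed by $\grs_{\wh S}$, after which the downward-closure of standard parabolics does the rest.
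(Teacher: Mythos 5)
Your argument is correct, and it reaches the conclusion by a genuinely different (though morally equivalent) mechanism from the paper's. The paper stays linear: it attaches to $h$ the dominant element $\grl_h=\sum_{\gra\in\grD}\gra(h)\,\gro_\gra^\vee$ of the dual lattice $\wh\grL^\vee_+$, observes that $\langle\grd-\grb,\grl_h\rangle=0$ exactly when $\grb\in\Psi$ and that $\grs_{\wh S}$ fixes $\grl_h$, and then squeezes $s_{\grd-\grb}(\grl_h)=\grl_h$ out of the monotonicity property $v\leq w\Rightarrow v\grl\geq w\grl$ for dominant $\grl$. You instead pass to the affine action on $\got$, place $x_0=\tfrac12 h$ in $\ol{A_0}$ (using $\theta(h)=2$, which is where the height-$2$ hypothesis enters in both proofs), note that $\grs_{\wh S}$ fixes $x_0$, and conclude via the alcove-stabilizer theorem together with the Bruhat downward-closure of standard parabolic subgroups. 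The two fixed objects are really the same datum ($\grl_h$ is the level-two lift of $x_0$), so the conceptual content coincides; the difference is in the key lemma invoked. Your route makes the geometry transparent (the result is literally a statement about which walls of $\ol{A_0}$ pass through $h/2$) at the cost of citing the stabilizer theorem and the parabolic saturation property; the paper's route is a two-line inequality once $\grl_h$ is set up, and needs no facts about parabolic subgroups. All the checkpoints you flag (the identification of $s_{\grd-\grb}$ with the reflection in $\{\grb(x)=1\}$ for either sign of $\grb$, and the correspondence of $\wh\grD$ with the walls of $A_0$) do work out under the realization used in the proof of Lemma \ref{lemma:01}, and the final step $\grb(h)=2>0\Rightarrow\grb\in\Phi^+\Rightarrow\grb\in\Psi$ is sound.
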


\begin{proof}
Let $\wh \grL^\vee = \Hom_{\mathbb Z}(\mathbb Z \wh \Phi, \mathbb Z)$ be the dual lattice of $\wh \Phi$. Fix the basis $\{\gro_\gra^\vee \; | \;\gra \in \wh \grD \}$ defined by $\langle \gro_\gra^\vee,\beta\rangle =\delta_{\alpha,\beta}$ (for $\alpha, \beta\in \wh \grD$), and let $\wh \grL^\vee_+$ be the semigroup generated by this basis.

By definition we have $\Psi = \{\gra \in \Phi \; | \; \gra(h) = 2\}$. We can associate to $h$ an element $\grl_h \in \wh \grL^\vee_+$, by setting $\grl_h = \sum_{\gra \in \grD} \gra(h)\, \gro_\gra^\vee$. For $\grb \in \Phi$, notice that
$$
	\langle \grb, \grl_h \rangle = \sum_{\gra \in \grD} \gra(h) \langle \grb, \gro_\gra^\vee \rangle = \grb(h).
$$
Thus
$$
	\langle \grd, \grl_h \rangle = \langle \gra_0 + \theta, \grl_h \rangle = \langle \theta, \grl_h \rangle = 2,
$$
and we see that $\langle \grd - \grb, \grl_h \rangle = 0$ if and only if $\grb \in \Psi$.

If $\alpha\in \wh \Delta$, let $\alpha^\vee\in \wh \Lambda^\vee$ be the corresponding simple coroot, defined by $\langle \beta, \alpha^\vee \rangle =\frac{2(\alpha,\beta)}{( \alpha,\alpha)}$.
Define a partial order on $\wh \grL^\vee$ by setting $x\leq y$ iff $y-x$ is a sum of simple coroots, and notice that this is compatible with the Bruhat order on $\wh W$ in the following sense: if $v,w \in \wh W$ and $v\leq w$, then $v\lambda\geq w\lambda$ for all $\lambda\in\wh \grL^\vee_+$.

Suppose now that $s_{\grd - \grb} \leq \grs_{\wh S}$. Then by the previous discussion $\grs_{\wh S}(\grl_h) \leq s_{\grd-\grb }(\grl_h) \leq \grl_h$. On the other hand, as already notice above, the condition $S\subset \Psi$ implies that  $\grs_{\wh S}(\grl_h) = \grl_h$. Therefore $s_{\grd - \grb} (\grl_h) = \grl_h$, and we get $\grb \in \Psi$.
\end{proof} 

\section{The closure of a nilpotent orbit of height 2 and its resolution}

From now on we will restrict our attention to the case of the nilpotent orbits of height 2. For completeness we recall in Table \ref{tab:orbite-nilp} the classification of these orbits in terms of weighted Dynkin diagram, however the classification will not be needed in the following.

Notice that $\calN_2$ is irreducible unless $G$ is of type $B_r$ (in which case it has 2 irreducible components) or $D_r$ (in which case it has 2 irreducible components if $r$ is odd, and 3 irreducible components if $r$ is even).

\begin{table}\caption{Nilpotent orbits of height 2}\label{tab:orbite-nilp}
\begin{center}
\begin{tabular}{|c||c|c|c|c|c|c|}
\hline
& $G$ & $\begin{array} {c} \text{weighted Dynkin}\\ \text{diagram} \end{array}$ & $\begin{array} {c} \text{partition/}\\ \text{BC label} \end{array}$ & $G_0/L$ & $\rk(\goa)$
\\ \hline \hline
1.1 & $A_{2r+l}$ & $(\underbrace{0\ldots0}_{r-1} 1 \underbrace{0\ldots0}_{l} 1 \underbrace{0\ldots0}_{r-1})$ & $\phantom{\Big(} (2^r, 1^{l+1}) \phantom{\Big)}$ & $(A_{2r-1}, \alpha_r)$ & $r$
\\ \hline
1.2 & $A_{2r-1}$ & $(\underbrace{0\ldots0}_{r-1} 2 \underbrace{0\ldots0}_{r-1})$ & $\phantom{\Big(} (2^r) \phantom{\Big)}$ & $(A_{2r-1}, \alpha_r)$ & $r$
\\ \hline
2.1 & $B_{r+1}$ & $(20\ldots0)$ & $\phantom{\Big(} (3, 1^{2r}) \phantom{\Big)}$ & $(B_{r+1}, \alpha_1)$  & $2$
\\ \hline
2.2 & $B_{2r+l}$ & $(\underbrace{0\ldots0}_{2r-1}1\underbrace{0\ldots0}_{l})$ & $\phantom{\Big(} (2^{2r}, 1^{2l+1}) \phantom{\Big)}$ & $(D_{2r}, \alpha_{2r})$ & $r$
\\ \hline
3.1 & $C_{r+l+1}$ & $(\underbrace{0\ldots0}_{r-1}1 \underbrace{0\ldots0}_{l+1})$ & $\phantom{\Big(} (2^r, 1^{2l+2}) \phantom{\Big)}$ & $(C_r, \alpha_r)$ & $r$
\\ \hline
3.2 & $C_r$ & $(0\ldots02)$ & $\phantom{\Big(}(2^r) \phantom{\Big)}$ & $(C_r, \alpha_r)$ & $r$
\\ \hline
4.1 & $D_r$ & $(20\ldots0)$ & $\phantom{\Big(} (3, 1^{2r-1}) \phantom{\Big)}$ & $(D_r, \alpha_1)$ & $2$
\\ \hline
4.2 & $D_{2r+l+2}$ & $(\underbrace{0\ldots0}_{2r-1}1 \underbrace{0\ldots0}_{l+2})$ & $\phantom{\Big(} (2^{2r}, 1^{2l+4}) \phantom{\Big)}$ & $(D_{2r}, \alpha_{2r})$ & $r$
\\ \hline
4.3 & $D_{2r+1}$ & $(0\ldots011)$ & $\phantom{\Big(} (2^{2r}, 1^2) \phantom{\Big)}$ & $(D_{2r}, \alpha_{2r})$ & $r$
\\ \hline
4.4 & $D_{2r}$ & $(0\ldots002)$ & $\phantom{\Big(} (2^{2r})^I \phantom{\Big)}$ & $(D_{2r}, \alpha_{2r})$ & $r$
\\ \hline
4.5 & $D_{2r}$ & $(0\ldots020)$ & $\phantom{\Big(} (2^{2r})^{I\!I} \phantom{\Big)}$ & $(D_{2r}, \alpha_{2r})$ & $r$
\\ \hline
5.1 & $E_6$ & $(010000)$ & $A_1$ & $(A_1, \alpha_1)$ & $1$
\\ \hline
5.2 & $E_6$ & $(100001)$ & $2A_1$ & $(D_5, \alpha_1)$ & $2$
\\ \hline
6.1 & $E_7$ & $(1000000)$ & $A_1$ & $(A_1, \alpha_1)$ & $1$
\\ \hline
6.2 & $E_7$ & $(0000010)$ & $2A_1$ & $(D_6, \alpha_1)$ & $2$
\\ \hline
6.3 & $E_7$ & $(0000002)$ & $3A_1''$ & $(E_7, \alpha_7)$ &  $3$
\\ \hline
7.1 & $E_8$ & $(00000001)$ & $A_1$ & $(A_1, \alpha_1)$ & $1$
\\ \hline
7.2 & $E_8$ & $(10000000)$ & $2A_1$ & $(D_8, \alpha_1)$ & $2$
\\ \hline
8.1 & $F_4$ & $(1000)$ & $A_1$ & $(A_1, \alpha_1)$ &  $1$
\\ \hline
8.2 & $F_4$ & $(0001)$ & $\phantom{\Big(} \wt A_1 \phantom{\Big)}$ & $(B_4, \alpha_1)$ & $2$
\\ \hline
9.1 & $G_2$ & $(01)$ & $A_1$ & $(A_1, \alpha_1)$ &  $1$
\\ \hline
\end{tabular}
\end{center}
\end{table}

Let $e \in \mathcal N_2$, and set $X = \overline{Ge}$. Let $\{e,h,f\}$ be an $\mathfrak{sl}_2$-triple with nilpositive part $e$, up to conjugation we may assume (and we will assume) that $h$ is the dominant characteristic of $Ge$. We denote by $P$ the parabolic subgroup of $G$ with Lie algebra $\gop = \bigoplus_{i \geq 0} \gog(i,h)$ and by $L$ the Levi subgroup of $P$ with Lie algebra $\gol = \gog(0,h)$. Moreover we denote by $\goa = \gog(2,h)$ the associated abelian ideal of $\gob$, and by $\Psi = \Phi(2,h)$ the corresponding set of positive roots. Finally we will denote by $\Ort(\Psi)$ the set of orthogonal subsets of $\Psi$: since $\goa$ is an abelian ideal, every orthogonal subset of $\Psi$ is actually strongly orthogonal.

By \cite{hesselink} the variety $X$ is normal, and the action of $G$ induces an equivariant projective, birational and surjective morphism
$$
	\phi : G \times_P \goa \longrightarrow G \goa = X,
$$
which is a rational resolution. We set $\wt X = G \times_P \goa$. Since $P^u$ acts trivially on $\goa$, it follows that $\wt X$ is obtained from the $L$-variety $\goa$ via parabolic induction. Given $S \in \Ort(\Psi)$, we will denote by $\tilde e_S = [1,e_S]$ the point defined by $e_S \in \goa$ inside $\wt X$.

\begin{remark}	\label{oss:symmetric}
When we regard $\goa$ as $L$-variety, it is convenient to consider the symmetric subgroup $G_0 \subset G$ with Lie algebra $\gog_0 = \gog(-2) \oplus \gog(0) \oplus \gog(2)$. Notice that $G_0 = G$ if and only if $e$ is an even nilpotent element. Otherwise $\gog_0$ is a reductive subalgebra of $\gog$, whose set of simple roots is $\grD_{G_0} = \grD_L \cup \{w_L (\theta) \}$ (thus $w_L (\theta)$ is the unique minimal root of $\Psi$). 
Then $\goa$ is the nilradical of the parabolic subalgebra $\gop_0 \subset \gog_0$ defined by $\gop_0 = \gog(0) \oplus \gog(2)$. The corresponding parabolic subgroup $P_0 \subset G_0$ is the maximal parabolic subgroup of $G_0$ associated to the simple root $w_L(\theta) \in \grD_{G_0}$. Notice that by construction $P_0$ is a parabolic subgroup of $G_0$ with abelian unipotent radical, and Levi decomposition $P_0 = L P_0^u$.

By replacing $G$ with $G_0$, it follows that the study of $\goa$ as $L$-variety reduces to the case where $L$ is a symmetric subgroup of Hermitian type of $G$, and $\goa$ is the nilradical of the corresponding standard parabolic subalgebra $\gop$. In particular, a description of the $L$-orbits in $\goa$ follows from the works of Richardson, R\"ohrle and Steinberg \cite{RRS} and of M\"uller, Rubenthaler and Schiffman \cite{MRS}. 

The Hermitian symmetric space $G_0/L$ is actually of a particular type. Let indeed $w_{G_0} \in W_{G_0}$ be the longest element and let $(w_{G_0})^{P_0} \in W_{G_0}^{P_0}$ be the minimal length coset representative of the coset $w_{G_0} W_L$, then
$$
	w_{G_0}(w_L(\theta)) = w_L (w_{G_0})^{P_0} w_L (\theta) = w_L w_{G_0}(\theta) = - w_L(\theta).
$$ 
This means that $G_0/L$ is a Hermitian symmetric variety of \textit{nonexceptional type}, or of \textit{tube type} (see e.g. \cite[Proposition 1.5]{Pa4}, \cite[Proposition 3.1]{CM} and \cite[Section 4.4]{GMP}, and the references therein).

For any $G$-orbit in $\calN_2$, the Hermitian symmetric space $G_0/L$  is described in Table \ref{tab:orbite-nilp} by giving the type of the simple factor of $G_0$ acting on it, and its corresponding simple root.
\end{remark}

\subsection{Orbit decompositions of $X$, $\wt X$, and $\goa$}
In this subsection we will basically recall known facts, concerning the description of the $G$-orbits and the $B$-orbits in $X$ and in $\wt X$, and parallelly of the $P$-orbits and the $B$-orbits in $\goa$. 

As $P^u$ acts trivially on $\goa$, notice that the $P$-orbits on $\goa$ coincide with the $L$-orbits, and the $B$-orbits on $\goa$ coincide with the $B_L$-orbits.

\begin{proposition} 	\label{prop:G-orbite-vs-L-orbite}
Let $x \in \goa$, then $Gx \cap \goa = L x$.
\end{proposition}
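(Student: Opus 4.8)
The plan is to prove the nontrivial inclusion $Gx\cap\goa\subseteq Lx$, the reverse being immediate: since $\gol=\gog(0,h)$ and $\goa=\gog(2,h)$ with $[\gog(0),\gog(2)]\subseteq\gog(2)$, the Levi $L$ stabilizes $\goa$, whence $Lx\subseteq Gx\cap\goa$. For the other inclusion I would exploit the cocharacter $\grl:\mk^*\to T$ attached to $h$, characterized by $\mathrm{Ad}(\grl(t))v=t^i v$ for $v\in\gog(i,h)$; recall that $P=P(\grl)$, $L=Z_G(\grl)$, that $P^u$ has Lie algebra $\gog(1)\oplus\gog(2)$, and that $(P^-)^u$ has Lie algebra $\gog(-1)\oplus\gog(-2)$. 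Given $y\in Gx\cap\goa$, write $y=gx$ and set $g_t=\grl(t)\,g\,\grl(t)^{-1}$. Since $x,y\in\gog(2)$, applying $\mathrm{Ad}(\grl(t))$ to $y=gx$ gives $t^2 y=\grl(t)\cdot gx=(\grl(t)g\grl(t)^{-1})\cdot(\grl(t)x)=t^2\,g_t x$, whence the key identity $g_t x=y$ for all $t\in\mk^*$.

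Two cases are then immediate. If $g\in P$, write $g=lu$ with $l\in L$ and $u\in P^u$; as $P^u$ acts trivially on $\goa$ we get $y=gx=l(ux)=lx\in Lx$. If instead $g\in P^-$, write $g=u^- l$ with $u^-\in(P^-)^u$ and $l\in L$; since $l$ commutes with $\grl(t)$ and $u^-$ has strictly negative weights, $g_t=(\grl(t)u^-\grl(t)^{-1})\,l\to l$ as $t\to\infty$, so by continuity of the action $y=\lim_{t\to\infty}g_t x=lx\in Lx$. More generally the same computation handles the whole big cell $P^-P=(P^-)^u\,L\,P^u$: writing $g=u^- l u$ with $u\in P^u$ trivial on $\goa$ and $u^-\in(P^-)^u$, one finds $g_t x=(\grl(t)u^-\grl(t)^{-1})\,lx\to lx$ as $t\to\infty$, forcing $y=lx\in Lx$.

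The remaining and main difficulty is to reduce an arbitrary $g$ to the big cell, that is, to show that $y=gx\in\goa$ already forces $g\in P^-P$. Here I would use the relative Bruhat decomposition $G=\bigsqcup_{w}P^-\dot w\,P$, with $w$ running over the representatives of $W_L\backslash W/W_L$, and write $g=q\,\dot w\,p$ with $q\in P^-$, $p\in P$, $\dot w\in N_G(T)$. As above $px=l_p x=:x'\in\goa$, so that $y=q\,(\dot w\,x')$, and the point is to show that $q\,(\dot w\,x')\in\goa$ can hold only for the trivial double coset $w\in W_L$. I expect this to follow from a weight argument: $\dot w$ permutes the $T$-weight spaces, sending the weights of $\goa=\gog(2)$ to weights lying in degrees $\le 2$, while $\mathrm{Ad}(q)$ with $q\in P^-$ never raises the degree; comparing the top graded component of $y\in\gog(2)$ with that of $q\,(\dot w\,x')$ and iterating on the lower degrees should force $\dot w(\Psi)$ to be supported in $\Psi$, hence $w\in W_L$. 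Making this grading bookkeeping precise is the technical heart of the argument.

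Alternatively, one may bypass this step by passing to the symmetric subgroup $G_0$ of Remark \ref{oss:symmetric}, whose Lie algebra $\gog_0=\gog(-2)\oplus\gog(0)\oplus\gog(2)$ carries the much simpler three–term grading: there $\goa$ is the abelian nilradical of the parabolic $\gop_0$, and the statement $G_0 x\cap\goa=Lx$ fits the classical analysis of the $L$–action on $\goa$ of Richardson–R\"ohrle–Steinberg \cite{RRS} and M\"uller–Rubenthaler–Schiffmann \cite{MRS}. One is then left with the reduction $Gx\cap\goa=G_0 x\cap\goa$, which I would approach through the grading involution $\mathrm{Ad}(\grl(-1))$: it fixes $x$ and $y$ (both of even weight) and $G_0$ is its fixed subgroup, so that $y=gx$ yields $g^{-1}\mathrm{Ad}(\grl(-1))(g)\in\Stab_G(x)$, the cocycle whose triviality would produce the required $g_0\in G_0$ with $g_0 x=y$.
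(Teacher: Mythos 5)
There is a genuine gap: the step you yourself flag as ``the technical heart'' --- reducing an arbitrary $g$ with $gx\in\goa$ to the big cell $P^-P$ --- is not carried out, and the strategy you propose for it cannot work as stated. It is false that $gx\in\goa$ forces $g\in P^-P$, or that in a decomposition $g=q\dot w p$ the double coset of $w$ must be trivial: the stabilizer of $x$ is in general not contained in $P^-P$. For a concrete counterexample take $G=\mathrm{SL}_6$ and the orbit with partition $(2^2,1^2)$, so that $\grD_L=\{\gra_1,\gra_3,\gra_5\}$ and $\Psi=\{\gre_i-\gre_j \mid i\in\{1,2\},\ j\in\{5,6\}\}$; for $x=e_{\gre_2-\gre_5}$ and $w$ the transposition $(3\,6)$ one has $\dot w x\in\goa$ while $W_L\,w\,W_L\neq W_L$. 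So your weight bookkeeping can only hope to show that $gx$ \emph{equals} $lx$ for some $l\in L$, not that $g$ itself lies in the big cell, and the Bruhat-cell induction you sketch does not obviously produce that. The alternative route through $G_0$ has the same status: the identity $Gx\cap\goa=G_0x\cap\goa$ is essentially equivalent to the proposition, and the ``triviality of the cocycle'' you invoke is exactly what needs proof.

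The idea you are missing, and which the paper uses, is that one does not need the Bruhat decomposition at all: writing $Q=P^-$, both $PQ^u$ and $Q^uP$ are open dense in the connected group $G$, hence $G=(PQ^u)(Q^uP)=P\,Q^u\,P$ as a set. Writing $g=phq$ with $p,q\in P$ and $h\in Q^u$, and using that $P$ stabilizes $\goa$ (with $P^u$ acting trivially), one replaces $x,y$ by $qx,p^{-1}y\in\goa$ and is reduced to showing $Q^ux\cap\goa=\{x\}$; this last point is immediate since for $h\in Q^u$ one has $hx-x\in\gog(-2)\oplus\gog(-1)\oplus\gog(0)\oplus\gog(1)$, which meets $\goa=\gog(2)$ only in $0$. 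Your limit argument via the cocharacter $\grl$ is a correct substitute for this last weight computation on the big cell, but without the factorization $G=PQ^uP$ it covers only a dense open subset of $G$ and the proof is incomplete.
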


\begin{proof}
Let $y \in \goa$ and $g \in G$ be such that $gx = y$. Denote $Q = P^-$, then we may assume that $g  \in Q^u$. Indeed $P Q^u$ and  $Q^u P$ are both open and dense in $G$, thus $G = (P Q^u)  (Q^u P) = P Q^u$: writing $g = phq$ with $p,q \in P$ and $h \in Q^u$, we may thus replace $x,y$ with $qx$ and $p^{-1}y$, which are still in $\goa$.

Thus we are reduced to show that $Q^u x\cap \goa=\{x\}$ for all $x \in \goa$. Let $x \in \goa$ and $g \in Q^u$, from the formula for the action of a root subgroup on a root vector it easily follows that
$$
	gx - x \in \gog(-2) \oplus \gog(-1) \oplus \gog(0) \oplus \gog(1).
$$
In particular we see that $gx \in \goa$ if and only if $gx = x$, and the claim is proved.
\end{proof}

\begin{corollary}
The resolution $\phi : \wt X \rightarrow X$ induces a bijection between the $G$-orbits on $\wt X$ and the $G$-orbits on $X$.
\end{corollary}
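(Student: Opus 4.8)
The plan is to factor the map on orbit sets induced by $\phi$ through the set of $L$-orbits on $\goa$, using the standard description of the orbits on an associated fiber bundle together with Proposition \ref{prop:G-orbite-vs-L-orbite}.

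First I would describe the $G$-orbits on $\wt X = G\times_P\goa$. Every point of $\wt X$ has the form $[g,x]$ with $g\in G$ and $x\in\goa$, and since $g^{-1}\cdot[g,x]=[1,x]$, each $G$-orbit meets the fiber over the base point of $G/P$. Moreover $[1,x]$ and $[1,x']$ lie in the same $G$-orbit if and only if $[1,x]=g'\cdot[1,x']=[g',x']$ for some $g'\in G$; by the defining relation of $G\times_P\goa$ this forces $g'=p\in P$ and $x=p\cdot x'$, that is $Px=Px'$. Hence $[1,x]\mapsto Px$ gives a bijection between $G\backslash\wt X$ and $P\backslash\goa$. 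As $P^u$ acts trivially on $\goa$, the $P$-orbits on $\goa$ coincide with the $L$-orbits, and we obtain a bijection $G\backslash\wt X \longleftrightarrow L\backslash\goa$, $G[1,x]\mapsto Lx$.

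Next I would identify $G\backslash X$ with $L\backslash\goa$. Since $X=G\goa$, any element $y$ of a $G$-orbit $\calO\subset X$ can be written $y=gx$ with $x\in\goa$, so $x=g^{-1}y\in\calO\cap\goa$ and every $G$-orbit on $X$ meets $\goa$. By Proposition \ref{prop:G-orbite-vs-L-orbite}, for $x\in\goa$ we have $Gx\cap\goa=Lx$; hence if $x,x'\in\goa$ satisfy $Gx=Gx'$, then $x'\in Gx\cap\goa=Lx$, so $Lx=Lx'$. Therefore $Lx\mapsto Gx$ is a well-defined bijection $L\backslash\goa\longleftrightarrow G\backslash X$.

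Finally, since $\phi([1,x])=x\in Gx$, the $G$-equivariant morphism $\phi$ sends the $G$-orbit $G[1,x]$ onto $Gx$, so the induced map $G\backslash\wt X\to G\backslash X$ is precisely the composite of the two bijections constructed above and is therefore a bijection. I do not expect any genuine obstacle here: the only substantial input beyond the formal bookkeeping of orbits on an induced bundle is Proposition \ref{prop:G-orbite-vs-L-orbite}, which is already available, so the argument reduces to assembling the two orbit correspondences and checking their compatibility with $\phi$.
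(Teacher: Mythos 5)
Your argument is correct and follows essentially the same route as the paper: both identify $G\backslash\wt X$ with $L\backslash\goa$ (using that $P^u$ acts trivially on $\goa$) and $G\backslash X$ with $L\backslash\goa$ (using Proposition \ref{prop:G-orbite-vs-L-orbite}), and conclude; you merely spell out the standard orbit bookkeeping for the induced bundle that the paper leaves implicit.
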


\begin{proof}
Since $P^u$ acts trivially on $\goa$, the $G$ orbits on $\wt X$ are in bijection with the $L$-orbits on $\goa$. On the other hand, thanks to Proposition \ref{prop:G-orbite-vs-L-orbite} the $G$-orbits on $X$ are also in bijection with the $L$-orbits on $\goa$.
\end{proof}

Thanks to Remark \ref{oss:symmetric}, a description of the $L$-orbit structure of $\goa$ follows from \cite{MRS} and \cite{RRS}. As shown in \cite{RRS} (see Propositions 2.13 and 2.15 therein), every $L$-orbit in $\goa$ is of shape $Le_S$ with $S \in \Ort(\Psi \cap \Phi_\ell)$, and given $S, S' \in \Ort(\Psi \cap \Phi_\ell)$ it holds
$$
	\overline{L e_S} \supset L e_{S'} \Longleftrightarrow \card(S) \geq \card(S').
$$
The \textit{rank} of $\goa$ is by definition the maximal cardinality of an orthogonal subset of $\Psi \cap \Phi_\ell$. In particular, if $r = \rk(\goa)$, then there are precisely $r+1$ orbits of $L$ in $\goa$, and they are linearly ordered by the inclusion of their closures. For any $G$-orbit in $\calN_2$, the rank of the corresponding abelian ideal is given in the last column of Table \ref{tab:orbite-nilp}.

The $B$-orbits on $\goa$ were investigated by Panyushev in \cite{Pa3}. In particular, there it is proved the following theorem (more generally, an analogous statement is proved in the case of any abelian ideal of $\gob$).

\begin{theorem}[\cite{Pa3}]
The $B$-orbits on $\goa$ are parametrized by the orthogonal subsets of $\Psi$, via the map $S \mapsto B e_S$.
\end{theorem}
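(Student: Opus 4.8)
There are two things to establish: that the map $S \mapsto Be_S$ is injective, and that it is surjective onto $B\backslash\goa$. Well-definedness is clear, since $e_S \in \goa$ whenever $S \subset \Psi$, and recall that every orthogonal subset of $\Psi$ is automatically strongly orthogonal because $\goa$ is abelian. Injectivity is then immediate from Proposition \ref{prop:strongly-orth-uniqueness}: if $R,S \subset \Psi$ are orthogonal with $Be_R = Be_S$, then $R$ and $S$ are strongly orthogonal subsets of $\Phi$ defining the same $B$-orbit in $\calN$, whence $R=S$. Thus the whole content of the statement is the surjectivity, namely that every $B$-orbit in $\goa$ contains a point $e_S$ with $S \subset \Psi$ orthogonal.

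For surjectivity the first step I would take is to replace $B$ by the smaller group $B_L = B \cap L$. Since $P$ is a standard parabolic we have $P^u \subset B$, so $B = B_L P^u$; as $P^u$ acts trivially on $\goa$ this gives $Bx = B_L x$ for every $x \in \goa$, and likewise $Be_S = B_L e_S$. We are therefore reduced to analysing the $B_L$-orbits on the abelian nilradical $\goa$, and by Remark \ref{oss:symmetric} we may assume that $L$ is a symmetric subgroup of Hermitian (tube) type with $\goa$ the nilradical of the associated maximal parabolic.

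The plan is then to produce in each $B_L$-orbit a representative of the form $e_S$ by a normalization procedure. Fix $x \in \goa \setminus\{0\}$ and consider its support $\supp(x) = \{\gamma \in \Psi \mid x_\gamma \neq 0\}$. The crucial dichotomy is whether $\supp(x)$ is orthogonal. If two roots $\gamma,\gamma' \in \supp(x)$ are non-orthogonal, then since $\goa$ is abelian $\gamma+\gamma' \notin \Phi$, which forces $(\gamma,\gamma') > 0$ and $\beta := \gamma'-\gamma \in \Phi$; moreover $\gamma,\gamma' \in \Phi(2,h)$ gives $\beta(h) = 0$, so up to swapping $\gamma$ and $\gamma'$ the root $\beta$ is positive in $L$ and $U_\beta \subset B_L$. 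One may then act by $\exp(t e_\beta)$ so as to cancel the $e_{\gamma'}$-component of $x$ against its $e_\gamma$-component. If such moves can be iterated to reach a translate $y \in B_L x$ whose support $S = \supp(y)$ is orthogonal, then $y \in Te_S$ (because, $S$ being orthogonal, $T$ acts transitively on the elements with full support $S$), and hence $Bx = B_L x = B_L e_S = Be_S$, as wanted.

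The heart of the argument, and the step I expect to be the most delicate, is showing that this normalization terminates: a single application of $\exp(t e_\beta)$ does cancel the chosen component $e_{\gamma'}$, but it simultaneously translates every other $\rho \in \supp(x)$ with $[e_\beta,e_\rho]\neq 0$ up by $\beta$, so a naive induction on the cardinality of the support fails because fresh support can be created. Controlling this requires exploiting more of the structure recalled in Remark \ref{oss:symmetric}, namely that $\goa$ is the nilradical of a parabolic of Hermitian tube type, equivalently that $\goa$ carries the structure of a simple Jordan algebra in which orthogonal sets of roots correspond to systems of orthogonal idempotents. This is exactly where the classification of the $L$-orbits by Richardson--R\"ohrle--Steinberg \cite{RRS} and M\"uller--Rubenthaler--Schiffmann \cite{MRS} intervenes: one first brings $x$ into the unique $L$-orbit $Le_{S}$ of its rank, and then refines the $B_L$-action within that stratum, ultimately reducing the problem to the rank-one $\mathfrak{sl}_2$-computation for a single orthogonal root. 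Carrying out this refinement uniformly is the technical core of Panyushev's theorem \cite{Pa3}, and is the obstacle on which the proof really hinges.
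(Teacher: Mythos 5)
This theorem is quoted from Panyushev \cite{Pa3} and the paper gives no proof of its own, so there is nothing internal to compare against; the question is only whether your argument stands on its own. The preliminary reductions are all correct: injectivity does follow from Proposition \ref{prop:strongly-orth-uniqueness}, the identity $Bx = B_Lx$ for $x \in \goa$ is right since $P^u$ acts trivially, and your observation that non-orthogonal $\grg,\grg' \in \supp(x)$ force $(\grg,\grg')>0$ and $\grb = \grg'-\grg \in \Phi_L$ is sound. The problem is that the entire content of surjectivity is the termination of the normalization procedure, and you do not prove it: you state the difficulty (fresh support created by $\exp(te_\grb)$), gesture at Jordan-algebra structure and at \cite{RRS}, \cite{MRS}, and then write that carrying it out ``is the technical core of Panyushev's theorem.'' That is an appeal to the statement being proved, so as written the argument is circular at its only nontrivial step.

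To close the gap one needs an actual elimination scheme with a termination certificate. The standard one (and essentially Panyushev's) is: take $\grg_1$ \emph{minimal} in $\supp(x)$ with respect to the dominance order; for every $\grg' \in \supp(x)$ non-orthogonal to $\grg_1$ the root $\grb = \grg'-\grg_1$ lies in $\Phi_L^+$ (minimality of $\grg_1$ rules out $\grb \in \Phi_L^-$), and $[e_\grb, e_{\grg_1}]$ is a nonzero multiple of $e_{\grg'}$, so one can cancel the $\grg'$-component against the pivot $\grg_1$; one then has to check that the roots newly created by these moves stay strictly above the ones already cleared, so that after finitely many steps $\supp(x)\setminus\{\grg_1\}$ is orthogonal to $\grg_1$, and one inducts on the root subsystem orthogonal to $\grg_1$ (here the tube-type structure of Remark \ref{oss:symmetric} guarantees the induction lands in a situation of the same kind). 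Note also that your pivoting as described goes the wrong way: with $\grb = \grg'-\grg$ and $\grg$ the lower root, $\exp(te_\grb)$ modifies the $e_{\grg'}$-component using the $e_\grg$-component, so it is the \emph{higher} root that gets cancelled against the lower one, not the reverse; this is why the minimal element of the support must be the pivot. Without this bookkeeping the proof is not complete.
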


As a corollary, we get the following description of the $B$-orbits on $\wt X$.

\begin{corollary}	\label{cor:orbite-in-tildeX}
The $B$-orbits on $\wt X$ are parametrized by $W^P \times \Ort(\Psi)$, via the map $(w,S) \mapsto B w \tilde e_S$.
\end{corollary}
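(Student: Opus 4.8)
The plan is to exploit the $G$-equivariant projection $\pi : \wt X = G \times_P \goa \to G/P$, $[g,x] \mapsto gP$, and to analyze the $B$-orbits fibrewise over the Bruhat decomposition of the base. Recall that $B$ acts on $G/P$ with finitely many orbits, the Schubert cells $C_w = BwP/P$ indexed by $w \in W^P$. Since $\pi$ is $B$-equivariant, every $B$-orbit on $\wt X$ projects onto a single cell $C_w$, so it suffices to describe, for each $w \in W^P$, the $B$-orbits contained in $\pi^{-1}(C_w)$.

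First I would fix $w \in W^P$ and reduce the fibrewise problem to an orbit problem on $\goa$ itself. The fibre $\pi^{-1}(wP/P) = [w, \goa]$ is stable under $\Stab_B(wP/P) = B \cap w.P$, and by the standard associated-bundle description $\pi^{-1}(C_w) \cong B \times_{B \cap w.P} [w,\goa]$ the $B$-orbits in $\pi^{-1}(C_w)$ are in natural bijection with the $(B \cap w.P)$-orbits on $[w,\goa]$. Transporting the action back through $[w,\goa] \cong \goa$, $[w,x] \mapsto x$, this is the same as the orbit set of the group $H_w = P \cap w^{-1}.B$ acting on $\goa$. Concretely, every point of $\pi^{-1}(C_w)$ is $B$-conjugate to some $[w,x]$ with $x \in \goa$, and $B[w,x] = B[w,y]$ if and only if $y \in H_w x$.

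The key computation is the identification of this orbit set with the $B$-orbits on $\goa$ already described. Since $P^u$ acts trivially on $\goa$, the $H_w$-orbits on $\goa$ depend only on the image of $H_w$ in the Levi quotient $L = P/P^u$, and I would show this image is exactly the Borel subgroup $B_L$. It contains $T$, and a Levi root subgroup $U_\gra$ (with $\gra \in \Phi_L$) lies in $H_w = P \cap w^{-1}.B$ precisely when $w(\gra) \in \Phi^+$; as $w \in W^P$ satisfies $w(\Phi_L^+) \subset \Phi^+$ and hence $w(\Phi_L^-) \subset \Phi^-$, the surviving Levi root subgroups are exactly those indexed by $\Phi_L^+$, so the image is $T \cdot \prod_{\gra \in \Phi_L^+} U_\gra = B_L$. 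Consequently the $H_w$-orbits on $\goa$ coincide with the $B_L$-orbits, which in turn coincide with the $B$-orbits on $\goa$.

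Finally I would assemble the two parametrizations. By the theorem of Panyushev quoted above, the $B$-orbits on $\goa$ are exactly $\{B_L e_S \; | \; S \in \Ort(\Psi)\}$ and $S \mapsto B_L e_S$ is a bijection onto them. Combining this with the indexing of the cells $C_w$ by $W^P$, the orbit $B[w,e_S] = Bw\tilde e_S$ is well defined for each pair $(w,S) \in W^P \times \Ort(\Psi)$, and every $B$-orbit on $\wt X$ arises this way, giving surjectivity. For injectivity, if $Bw\tilde e_S = Bw'\tilde e_{S'}$ then applying $\pi$ forces $C_w = C_{w'}$, hence $w = w'$ since $W^P$ indexes distinct cells, and then the fibrewise bijection forces $B_L e_S = B_L e_{S'}$, whence $S = S'$. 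The main obstacle is the fibre computation of the second and third paragraphs: getting the associated-bundle reduction right and pinning the image of $H_w$ in $L$ to be exactly $B_L$ via $w(\Phi_L^+) \subset \Phi^+$; once these are in place, the rest is formal bookkeeping over the Bruhat cells.
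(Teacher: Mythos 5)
Your proof is correct and follows essentially the same route as the paper: both project to $G/P$ to extract $w \in W^P$, reduce the fibre over $BwP/P$ to an orbit problem on $\goa$ using that $P^u$ acts trivially and that $w(\Phi_L^+) \subset \Phi^+$ forces the relevant stabilizer to act through $B_L$, and then invoke Panyushev's parametrization of the $B_L$-orbits on $\goa$. Your version merely makes the associated-bundle bijection and the computation of the image of $P \cap w^{-1}.B$ in $L$ more explicit than the paper's condensed argument via $Bw = BwB_L$.
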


\begin{proof}
Let $\calO \subset \wt X$ be a $B$-orbit and let $w \in W^P$ be the element defined by $\calO$ via the projection $\wt X \rightarrow G/P$. Then we can write $\calO = B [w, g]$ for some $g \in \goa$, and $w \in W^P$ is uniquely determined by this property. As $w \in W^P$, we have $B w = BwB_L$. On the other hand $P^u$ acts trivially on $\goa$, therefore we see that $\calO = B[w,B_L g] = BwB_L \tilde e_S = Bw\tilde e_S$ for some $S \in \Ort(\Psi)$.

If $S' \in \Ort(\Psi)$ and $Bw\tilde e_S = Bw\tilde e_{S'}$, then
$$\tilde e_{S'} \in (w^{-1}Bw) \tilde e_S \cap \goa =(B \cap w^{-1}.B) \, \tilde e_S \cap \goa.$$
Since $B_L \subset B \cap w^{-1}.B$ and since $B_L e_S = B e_S$, it follows that there exists a unique $S \in \Ort(\Psi)$ such that $\calO = Bw \tilde e_S$.
\end{proof}

By the previous corollary we get a similar parametrization for the $B$-orbits on $X$ as well.
Denote
$$
	\Ort(X) = \{w(S) \; | \; w \in W, \, S \in \Ort(\Psi)\}
$$
Notice that every element in $\Ort(X)$ is actually strongly orthogonal, since every element in $\Ort(\Psi)$ is so.

\begin{proposition}	\label{prop:B-orbits-in-X}
The $B$-orbits on $X$ are parametrized by $\Ort(X)$, via the map $S \mapsto Be_S$.
\end{proposition}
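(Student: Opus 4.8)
The plan is to transport the parametrization of the $B$-orbits on $\wt X$ furnished by Corollary \ref{cor:orbite-in-tildeX} across the resolution $\phi : \wt X \to X$, and then to extract injectivity from Proposition \ref{prop:strongly-orth-uniqueness}. First I would record that $\Ort(X)$ can be described using only minimal coset representatives. Given $w \in W$, write $w = uv$ with $u \in W^P$ and $v \in W_L$; since $\goa = \gog(2,h)$ is $L$-stable, the set $\Psi = \Phi(2,h)$ is $W_L$-stable, so for $S \in \Ort(\Psi)$ we have $v(S) \in \Ort(\Psi)$ and hence $w(S) = u(v(S))$ with $u \in W^P$. Thus $\Ort(X) = \{w(S) \mid w \in W^P, \, S \in \Ort(\Psi)\}$. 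I would also check that the map is well defined: for $T = w(S) \in \Ort(X)$ the element $w e_S$ lies in the $T$-stable set $X = G\goa$, and since $w e_S \in Te_T$ this forces $e_T \in X$; as every element of $\Ort(X)$ is strongly orthogonal, $Be_T$ is a genuine $B$-orbit of $X \subset \calN$.

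For surjectivity I would use that $\phi$ is $G$-equivariant and surjective. A representative of a general $B$-orbit of $\wt X$ is $w\tilde e_S = [w,e_S]$ with $w \in W^P$ and $S \in \Ort(\Psi)$, so $\phi(w\tilde e_S) = w e_S = \sum_{\gra \in S} w e_\gra$. Since $w e_\gra$ is a nonzero element of $\gog_{w(\gra)}$, this lies in $Te_{w(S)}$, whence $\phi(Bw\tilde e_S) = Be_{w(S)}$, the orbit being independent of the normalization of the root vectors. Because $\phi$ is $B$-equivariant it carries $B$-orbits to $B$-orbits, and because it is surjective every $B$-orbit of $X$ is the image of a $B$-orbit of $\wt X$. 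By Corollary \ref{cor:orbite-in-tildeX} these images are exactly the orbits $Be_{w(S)}$ with $(w,S) \in W^P \times \Ort(\Psi)$, that is, the orbits $Be_T$ with $T \in \Ort(X)$. Hence $T \mapsto Be_T$ is onto $B\backslash X$.

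Injectivity is then immediate: if $T, T' \in \Ort(X)$ satisfy $Be_T = Be_{T'}$, then since both are strongly orthogonal subsets of $\Phi$ and $X \subset \calN$, Proposition \ref{prop:strongly-orth-uniqueness} gives $T = T'$. In this argument the genuinely substantial input is imported rather than reproved: the surjectivity is essentially bookkeeping combining the $G$-equivariance and surjectivity of the resolution with Corollary \ref{cor:orbite-in-tildeX}, while all the difficulty of separating distinct orthogonal subsets is packaged into Proposition \ref{prop:strongly-orth-uniqueness}. Accordingly, the only point demanding care is the identity $\phi(Bw\tilde e_S) = Be_{w(S)}$ together with the reduction of $\Ort(X)$ to $W^P$-translates, which rests on the $W_L$-stability of $\Psi$ and on the triviality of the $P^u$-action on $\goa$.
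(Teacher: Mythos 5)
Your proof is correct, and the surjectivity half is exactly the paper's argument: push the parametrization of Corollary \ref{cor:orbite-in-tildeX} through the surjective $B$-equivariant resolution $\phi$. The extra care you take in checking that $\Ort(X)$ is already exhausted by $W^P$-translates (via the $W_L$-stability of $\Psi$) and that $\phi(Bw\tilde e_S)=Be_{w(S)}$ independently of normalizations is detail the paper leaves implicit, and it is all sound. Where you diverge is in the injectivity step: the paper separates $Bwe_S$ from $Bve_R$ by combining Proposition \ref{prop:bruhat-involuzioni} with Corollary \ref{cor:iniettivita-involuzioni}, i.e.\ it shows that the associated affine involutions $\grs_{w(\wh S)}$ and $\grs_{v(\wh R)}$ already distinguish the orbits (using that elements of $\Ort(X)$ have height at most $2\leq 3$), whereas you invoke the general injectivity statement of Proposition \ref{prop:strongly-orth-uniqueness}, which holds for arbitrary strongly orthogonal subsets with no height restriction. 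Both are legitimate and both are available at this point in the paper; the paper's choice has the advantage of recording along the way that the orbits in $X$ are separated by their involutions, which is the form of the statement actually used later (e.g.\ in Theorem \ref{teo:bruhat3}), while your route is marginally more economical if one only wants the bijection. Note also that Proposition \ref{prop:strongly-orth-uniqueness} is itself proved via Proposition \ref{prop:bruhat-involuzioni} and Lemma \ref{lemma:01}, so your argument is not independent of the involution machinery, merely packaged differently.
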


\begin{proof}
By Corollary \ref{cor:orbite-in-tildeX}, every $B$-orbit in $\wt X$ is of the shape $Bw \tilde e_S$ for some $w \in W$ and some $S \in \Ort(\Psi)$. Thus every $B$-orbit in $X$ is of the shape $Bw e_S$ for some $w \in W$ and some $S \in \Ort(\Psi)$. On the other hand, by Proposition \ref{prop:bruhat-involuzioni} and Corollary \ref{cor:iniettivita-involuzioni}, if $v,w \in W$ and $R,S \in \Ort(\Psi)$, then the orbits $Bwe_S$ and $Bve_R$ are equal if and only if $w(S) = v(R)$.
\end{proof}

\begin{corollary}	\label{cor:height2-is-orthogonal}
Every $B$-orbit in $\calN_2$ is strongly orthogonal.
\end{corollary}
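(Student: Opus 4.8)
The plan is to reduce the statement to Proposition \ref{prop:B-orbits-in-X}, which already parametrizes the $B$-orbits in the closure of a single nilpotent $G$-orbit of height $2$ by strongly orthogonal subsets. The only thing left to check is that an arbitrary $B$-orbit in $\calN_2$ is captured by one such closure, which amounts to observing that $\calN_2$ is the union of the closures $\overline{Ge}$ over the height-$2$ orbits, together with the zero orbit.

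First I would fix a $B$-orbit $\calO \subset \calN_2$ and choose $e \in \calO$. Since $B \subset G$, the orbit $\calO$ is contained in a single $G$-orbit, and because $\calO \subset \calN_2$ this $G$-orbit has height at most $2$. If $e = 0$ then $\calO = \{0\} = B e_\varnothing$ is strongly orthogonal with $S = \varnothing$, so I may assume $e \neq 0$. In that case $\height(e) = 2$: a nonzero nilpotent element always has height at least $2$, since for any $\mathfrak{sl}_2$-triple $\{e,h,f\}$ one has $\ad(e)^2 f = \ad(e)h = -2e \neq 0$. Hence $Ge$ is a genuine nilpotent $G$-orbit of height $2$, and not a lower stratum.

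Next I would set $X = \overline{Ge}$. As $X$ is $G$-stable it is in particular $B$-stable, so $\calO = Be \subset Ge \subset X$ is a $B$-orbit contained in $X$. Proposition \ref{prop:B-orbits-in-X} then yields $\calO = B e_S$ for some $S \in \Ort(X)$, and since every element of $\Ort(X)$ is strongly orthogonal the orbit $\calO$ is strongly orthogonal, as required.

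All the substance is contained in Proposition \ref{prop:B-orbits-in-X} (itself resting on the parametrization of the $B$-orbits in the resolution $\wt X$ from Corollary \ref{cor:orbite-in-tildeX} and on the injectivity recorded in Corollary \ref{cor:iniettivita-involuzioni}), so I expect no genuine obstacle at this step: the corollary is essentially a packaging statement. The only mild subtlety worth recording is the elementary fact that a nonzero nilpotent element cannot have height $1$, which guarantees that every nonzero $B$-orbit in $\calN_2$ actually sits inside the closure of a height-$2$ orbit, so that Proposition \ref{prop:B-orbits-in-X} applies verbatim.
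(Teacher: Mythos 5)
Your argument is correct and is exactly the reasoning the paper intends: the corollary is stated without proof as an immediate consequence of Proposition \ref{prop:B-orbits-in-X}, applied to $X=\overline{Ge}$ for the height-$2$ orbit $Ge$ containing the given $B$-orbit (with the zero orbit handled by $S=\varnothing$). Your explicit check that a nonzero nilpotent element has height at least $2$ is a reasonable piece of bookkeeping that the paper leaves implicit.
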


We now introduce a numerical invariant that controls what are the orthogonal subsets of $\Psi$ which give rise to a same $L$-orbit. For $S \in \Ort(\Psi)$ define
$$
	\rk_G(S) = 2\card(S\cap \Phi_s) + \card(S\cap \Phi_\ell)
$$
The following proposition is essentially taken from \cite{RRS}.

\begin{proposition} 	\label{prop:L-orbite-in-a}
Let $S,R \in \Ort(\Psi)$, then $L e_S = L e_R$ if and only if $\rk_G(S) = \rk_G(R)$. Moreover, $e_S$ is in the open $L$-orbit of $\goa$ if and only if $S \in \Ort(\Psi)$ is maximal with respect to the inclusion.
\end{proposition}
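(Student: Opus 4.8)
The plan is to recast both assertions as a single identity relating $\rk_G$ to an orbit invariant. By the results of \cite{RRS} recalled above, every $L$-orbit in $\goa$ is of the form $Le_{S'}$ with $S'\in\Ort(\Psi\cap\Phi_\ell)$, these orbits are totally ordered by inclusion of closures, and $Le_{S'}=Le_{S''}$ if and only if $\card(S')=\card(S'')$. Thus each orbit carries a well-defined \emph{level} $\mathrm{lev}\in\{0,1,\dots,r\}$, namely the common cardinality of the long orthogonal subsets representing it. Both statements of the proposition then follow once one proves
\begin{equation}
	\mathrm{lev}(Le_S)=\rk_G(S),\qquad S\in\Ort(\Psi).
	\tag{$\ast$}
\end{equation}
Indeed $(\ast)$ gives $Le_S=Le_R\Leftrightarrow\mathrm{lev}(Le_S)=\mathrm{lev}(Le_R)\Leftrightarrow\rk_G(S)=\rk_G(R)$, and it also shows $\rk_G(S)\leq r$ for every $S$.

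To prove $(\ast)$ I would establish a \emph{splitting lemma}: for every $S\in\Ort(\Psi)$ there is $S^\flat\in\Ort(\Psi\cap\Phi_\ell)$ with $Le_S=Le_{S^\flat}$, $\card(S^\flat)=\rk_G(S)$ and $\Span_\mQ(S)\subseteq\Span_\mQ(S^\flat)$. One builds $S^\flat$ by keeping the long roots of $S$ and replacing each short root $\grb\in S$ by a pair of orthogonal long roots. Using the reduction of Remark \ref{oss:symmetric} (so that $\goa$ is the nilradical of a Hermitian symmetric pair of tube type), for a short root $\grb\in\Psi$ one finds a short root $\mu\in\Phi(0)$ with $\grb\perp\mu$ and with $\grg:=\grb+\mu$, $\grg':=\grb-\mu$ both long roots of $\Psi$; here $\grg\perp\grg'$ because $\grb$ and $\mu$ have equal length, and $\grb=\tfrac12(\grg+\grg')$. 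Choosing $\mu$ orthogonal to $S\setminus\{\grb\}$ makes $S^\flat$ orthogonal, and the element of $L$ realizing the replacement lies in the rank-one subgroup attached to $\mu$ and fixes the remaining root vectors up to scaling; carrying out the substitutions one short root at a time yields $Le_S=Le_{S^\flat}$, while $\card(S^\flat)=\card(S\cap\Phi_\ell)+2\,\card(S\cap\Phi_s)=\rk_G(S)$. Since $\mathrm{lev}(Le_{S^\flat})=\card(S^\flat)$, this is exactly $(\ast)$.

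The core of the splitting lemma is the rank-two statement that $e_\grb$ and $e_\grg+e_{\grg'}$ lie in the same $L$-orbit. This takes place inside the subsystem of type $B_2$ spanned by the orthogonal short roots $\grb,\mu$, whose degree-$2$ part is the three-dimensional module $\langle e_\grg,e_\grb,e_{\grg'}\rangle\cong\Symm^2(\mk^2)$ on which the $\GL_2$ attached to $\mu$ acts with weights $2,0,-2$. In this picture $e_\grb$ is a nondegenerate binary quadratic form and $e_\grg+e_{\grg'}$ is a sum of two squares, so they are $\GL_2$-equivalent; this is elementary. With $(\ast)$ in hand the \emph{moreover} part follows: $e_S$ lies in the open orbit exactly when $\mathrm{lev}(Le_S)=r$, i.e. when $\rk_G(S)=r$. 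If $\rk_G(S)=r$ then $S$ is maximal, since adjoining any orthogonal root of $\Psi$ would force $\rk_G>r$; conversely, if $\rk_G(S)<r$ then $\card(S^\flat)<r$, so by the total order of \cite{RRS} there is $\grg\in\Psi\cap\Phi_\ell$ orthogonal to $S^\flat$ with $S^\flat\cup\{\grg\}\in\Ort(\Psi\cap\Phi_\ell)$, and since $\Span_\mQ(S)\subseteq\Span_\mQ(S^\flat)$ this $\grg$ is also orthogonal to $S$, so $S\cup\{\grg\}\supsetneq S$ shows that $S$ is not maximal.

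I expect the main obstacle to be the structural input of the splitting lemma: producing, for each short root $\grb$ of $S$, an auxiliary short root $\mu\in\Phi(0)$ that is orthogonal to $S\setminus\{\grb\}$, splits $\grb$ into two long roots of $\Psi$, and whose rank-one subgroup centralizes the other root vectors (so the substitutions are genuinely independent). Orthogonality of $\gra$ and $\mu$ alone does not prevent $\gra+\mu$ from being a root, so this really relies on the block structure coming from the tube-type geometry of $G_0/L$ (Remark \ref{oss:symmetric}). I note that short roots of $\Phi$ can occur in $\Psi$ only in types $B$, $C$ and $F_4$ --- in type $G_2$ the set $\Psi$ consists of a single long root, and in the simply-laced case $\rk_G(S)=\card(S)$, so there $(\ast)$ reduces directly to \cite{RRS} with $S^\flat=S$.
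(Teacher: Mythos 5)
Your proposal is correct and follows essentially the same route as the paper: after the reduction of Remark \ref{oss:symmetric} and the input from \cite{RRS}, the paper performs exactly your splitting move, replacing a short root $\theta_s$ by the orthogonal long pair $\{\theta,\,2\theta_s-\theta\}$ inside the same $L$-orbit via the rank-one subgroup $U_{-\gra}U_\gra$ attached to $\gra=\theta-\theta_s\in\Phi_L$ (your $\mu$), one short root at a time. The structural input you flag as the main obstacle is settled there by first conjugating the chosen short root to the highest short root $\theta_s$ by an element of $W_L$ and then checking the only two possible cases after the reduction ($B_n$ with $\gra_1$ and $C_n$ with $\gra_n$) explicitly; the ``moreover'' part is simply cited from \cite{GMP}, where your self-contained argument also works.
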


\begin{proof}
By Remark \ref{oss:symmetric}, in order to study the $L$-orbits on $\goa$ we may assume that $\goa$ is the nilradical of $P$, which is therefore a parabolic subgroup of $G$ with abelian unipotent radical. By \cite[Proposition 2.8 and Remark]{RRS}, for all $p,q \in \mN$, the Weyl group $W_L$ acts transitively on all orthogonal subsets of $\Psi$ containing $p$ short roots and $q$ long roots. 

The claim follows immediately if all the involved roots are long. To conclude it is enough to show that, if $\Phi$ is not simply laced and $S$ contains precisely $p$ short roots and $q$ long roots, then there exists $S' \in \Ort(\Psi)$ containing $p-1$ short roots and $q+2$ long roots such that $e_{S'} \in L e_S$.

Suppose that $S$ contains a short root. Recall that the unipotent radical of a standard parabolic subgroup $Q \subset G$ is abelian if and only if $Q$ is maximal, corresponding to a simple root which occurs in the highest root $\theta$ with coefficient one. In particular, if $\Phi$ is not simply laced, then it is either of type $B$ or of type $C$, in which cases $Q$ is the maximal parabolic defined by $\gra_1$ in the first case and by $\gra_n$ in the second case (see \cite[Remark 2.3]{RRS}). Acting with $w_L$ we can assume that $S$ contains the highest short root $\theta_s$. Set $\gra = \theta - \theta_s$ and $\grb = 2\theta_s - \theta$, and notice that $\gra \in \Phi_L$ and $\grb \in \Psi$. Since we only have to deal with two cases, for simplicity we describe the roots through the usual $\gre$-notation.

Suppose that $\Phi$ is of type $B_n$. Then $\goa$ is the abelian nilradical associated to the maximal parabolic subgroup of $G$ defined by $\gra_1$, namely we have
$$
	\Psi = \{\gre_1, \gre_1 \pm \gre_2, \ldots, \gre_1 \pm \gre_n\}.
$$
In this case, $\theta_s = \gre_1$, $\theta = \gre_1+ \gre_2$, $\gra = \gre_2$ and $\grb = \gre_1 - \gre_2$. In particular, since there is no root in $\Psi$ orthogonal to $\theta_s$, we have $S = \{\theta_s\}$.

Suppose instead that $\Phi$ is of type $C_n$. Then $\goa$ is the abelian nilradical associated to the maximal parabolic subgroup of $G$ defined by $\gra_n$, namely we have
$$
	\Psi = \{\gre_i + \gre_j \; | \; 1 \leq i \leq j \leq n\}.	
$$
In this case, $\theta_s = \gre_1+\gre_2$, $\theta = 2\gre_1$, $\gra = \gre_1-\gre_2$ and $\grb = 2\gre_2$. In particular, we see that every root in $\Psi$ which is orthogonal to $\theta_s$ is necessarily orthogonal both with $\theta$ and with $\grb$.

Denote $S' = (S \setminus \{\theta_s\}) \cup \{\theta, \grb\}$. By the previous remarks $S' \in \Ort(\Psi)$, and by construction $S'$ contains $p-1$ short roots and $q+2$ long roots. On the other hand it is easy now to see that $e_{S'} \in U_{-\gra} U_\gra e_S \subset L e_S$, which concludes the proof of the first claim.

For the last claim, see e.g. \cite[Remark 1, pg. 333]{GMP}.
\end{proof}

\subsection{Canonical orthogonal subsets and the corresponding  characteristics}

A special orthogonal subsets of $\Psi$ can be constructed with the recursive cascade procedure, which goes back to Harish-Chandra. Following \cite{Pa4}, we will call it the \textit{canonical subset} of $\Psi$, and denote it by $S_\Psi$. We recall its construction, for details see e.g. \cite[Section 1]{Pa4}.

Let $\grg_1 \in \Psi$ be the unique maximal root (namely $\grg_1 = \theta$ is the highest root of $\Phi$) and set $S_\Psi^1 = \{\grg_1\}$. Assuming that $S_\Psi^i = \{\grg_1, \ldots, \grg_i\}$ has already been constructed, there exists a unique root $\grg_{i+1} \in \Psi$ which is orthogonal to all $\grg_j$ with $j \leq i$ and which is maximal with these properties, and
we set $S_\Psi^{i+1} = S_\Psi^i \cup \{\grg_{i+1}\}$.
When the procedure ends, the resulting set of roots is the canonical subset $S_\Psi$. In particular, it follows from the construction that $\grg_1 > \grg_2 > \ldots > \grg_r$.

Equivalently, the canonical subset $S_\Psi$ can be constructed from a similar construction starting from the unique minimal root $\grb_1 \in \Psi$ (that is, $\grb_1 = w_L(\theta)$) and set $S_{\Psi,1} = \{\grb_1\}$. Assuming that $S_{\Psi,i} = \{\grb_1, \ldots, \grb_i\}$ has already been constructed, there exists a unique root $\grb_{i+1} \in \Psi$ which is orthogonal to all $\grb_j$ with $j \leq i$ and which is minimal with these properties,
and we set $S_{\Psi, i+1} = S_{\Psi,i} \cup \{\grb_{i+1}\}$.
When the procedure ends, the resulting set of roots is again the canonical subset $S_\Psi$: namely for all $i \leq r$ it holds $\grb_i = \grg_{r+1-i}$. 

\begin{remark}	\label{oss:lower-orthogonal}
The fact that the two constructions produce the same orthogonal subset provides indeed a characterization of the Hermitian symmetric spaces of tube type, see \cite[Proposition 1.5]{Pa4} and its remarks.
\end{remark}

It is well known that all roots in $S_\Psi$ are long, and that $S_\Psi$ is an orthogonal subset of $\Psi$ of maximal cardinality. Thus by Proposition \ref{prop:L-orbite-in-a} the corresponding nilpotent element $e_{S_\Psi}$ is in the open $L$-orbit of $\goa$ (it actually belongs to the open $B$-orbit of $\goa$, see \cite[Lemma 2.5]{Pa3} and Remark \ref{oss:lower-orthogonal} above).

For $i \leq r$, we define
\begin{equation}	\label{eq:sl2-tripla canonica}
	e_i = \sum_{j=1}^i e_{\grg_j}, \qquad
	h_i = \sum_{j=1}^i h_{\grg_j}, \qquad
	f_i = \sum_{j=1}^i f_{\grg_j}.
\end{equation}

As a consequence of Proposition \ref{prop:L-orbite-in-a} we see that the elements $\{e_1, \ldots, e_r\}$ form a complete system of representatives for the $L$-orbits on $\goa$, hence for the $G$-orbits on $X$ by Proposition \ref{prop:G-orbite-vs-L-orbite}.

\begin{proposition}	\label{prop:caratteristiche-upper-canonical}
Let $i \leq r$, then $h_i$ is the dominant characteristic of $Ge_i$.
\end{proposition}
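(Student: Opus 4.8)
The plan is to establish two facts and then invoke uniqueness of the dominant characteristic: first, that $\{e_i,h_i,f_i\}$ is an $\mathfrak{sl}_2$-triple, so that $h_i$ is \emph{a} characteristic of $Ge_i$; and second, that $h_i$ is dominant. Since $Gh_i\cap\got$ contains a unique dominant element (as recalled in Section 1), the two facts together force $h_i$ to be \emph{the} dominant characteristic of $Ge_i$. The first fact is immediate: the roots $\grg_1,\ldots,\grg_i$ form a subset of the canonical subset $S_\Psi$, hence are pairwise strongly orthogonal, so exactly as in the proof of Proposition \ref{prop:nilpotent} the elements defined in \eqref{eq:sl2-tripla canonica} span an $\mathfrak{sl}_2$-triple with semisimple part $h_i=\sum_{j=1}^i\grg_j^\vee$, and $e_i$ is its nilpositive element.

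The whole content therefore lies in proving that $h_i$ is dominant, i.e. that $\gra(h_i)\geq 0$ for every $\gra\in\grD$. All the roots $\grg_j$ are long, hence of equal length $(\grg_j,\grg_j)=(\theta,\theta)$, so that
\[
\gra(h_i)=\sum_{j=1}^{i}\langle\gra,\grg_j^\vee\rangle=\frac{2}{(\theta,\theta)}\Bigl(\gra,\sum_{j=1}^{i}\grg_j\Bigr).
\]
Thus dominance of $h_i$ is equivalent to the statement that the partial sum $\mu_i=\grg_1+\cdots+\grg_i$ is a dominant weight, and this is what I would prove.

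To prove that $\mu_i$ is dominant I would argue by induction on the rank $r$, exploiting the recursive structure of the cascade. Since $\grg_1=\theta$ and each $\grg_{k+1}$ is the highest root of $\Psi$ orthogonal to $\grg_1,\ldots,\grg_k$, the roots $\{\grg_2,\ldots,\grg_r\}$ are precisely the canonical subset of $\Psi\cap\theta^\perp$ relative to the root subsystem $\Phi\cap\theta^\perp$. By the inductive hypothesis the weight $\nu=\grg_2+\cdots+\grg_i$ is dominant for $\Phi\cap\theta^\perp$, and $\mu_i=\theta+\nu$ with $\nu\perp\theta$. Now let $\gra\in\grD$. If $(\gra,\theta)=0$, then $\gra$ is a positive root of $\Phi\cap\theta^\perp$, so dominance of $\nu$ yields $(\gra,\mu_i)=(\gra,\theta)+(\gra,\nu)=(\gra,\nu)\geq 0$; this case is clean.

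The main obstacle is the remaining case, the (one or two) simple roots $\gra$ with $(\gra,\theta)>0$: here $(\gra,\nu)$ can be negative, and one must prove the sharp inequality $(\gra,\nu)\geq-(\gra,\theta)$, which in examples such as types $C$ and $A$ turns out to be an \emph{equality}. The tightness shows that only a genuinely structural input will do, and I expect this to be exactly where the tube-type hypothesis enters. Concretely, I would control $(\gra,\nu)$ using that $\grg_2$ is the highest root of $\Phi\cap\theta^\perp$ together with $\langle\theta,\gra^\vee\rangle=1$, invoking the symmetry of the cascade coming from the agreement of the two recursive constructions (Remark \ref{oss:lower-orthogonal}), which is the manifestation of tube type. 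Once the inequality is verified for these boundary simple roots, $\mu_i$ is dominant, hence $h_i$ is dominant, and the proposition follows from uniqueness of the dominant characteristic.
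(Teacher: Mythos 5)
Your reduction to ``$h_i$ is an $\mathfrak{sl}_2$-characteristic, so it suffices to prove dominance'' matches the paper, but the dominance argument has a genuine gap, and you flag it yourself: the case of a simple root $\gra$ with $(\gra,\theta)>0$ is exactly the content of the statement, and for it you only offer a plan (``I would control $(\gra,\nu)$ using\dots''), not a proof. Two specific problems with that plan. First, the induction on the rank is not set up: to apply the inductive hypothesis to $\nu=\grg_2+\cdots+\grg_i$ you would need to know that $\Psi\cap\theta^\perp$ is again the root set of an abelian ideal attached to a height-2 orbit (or at least a tube-type abelian nilradical) in the subsystem $\Phi\cap\theta^\perp$, that $\grg_2$ is the highest root of the relevant component, and that $\{\grg_2,\ldots,\grg_r\}$ is its cascade; none of this is established, and $\Phi\cap\theta^\perp$ need not even be irreducible. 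Second, the auxiliary fact you propose to lean on, $\langle\theta,\gra^\vee\rangle=1$, is false in type $C_n$ (for $\theta=2\gre_1$ and $\gra_1=\gre_1-\gre_2$ one has $\langle\theta,\gra_1^\vee\rangle=2$), which is one of the cases where, as you note, the inequality $(\gra,\nu)\geq-(\gra,\theta)$ is an equality — so the boundary case cannot be waved through.

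For comparison, the paper avoids the boundary-root difficulty entirely by inducting on $i$ rather than on the rank, and by testing $h_i$ against an arbitrary $\gra\in\Phi^+$: if $\langle\gra,\grg_i^\vee\rangle<0$ then $\grg_i+\gra$ is a root of $\Psi$ strictly greater than $\grg_i$, so by the maximality of $\grg_i$ in the cascade the set $\{\grg_1,\ldots,\grg_{i-1},\grg_i+\gra\}$ cannot be orthogonal, i.e.\ $\sum_{j<i}\langle\gra,\grg_j^\vee\rangle\neq 0$, hence $>0$ by the inductive hypothesis; since $\grg_i$ is long, $\langle\gra,\grg_i^\vee\rangle=-1$ and the sum stays nonnegative. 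The only structural inputs are that the cascade roots are long and that each $\grg_i$ is maximal among roots of $\Psi$ orthogonal to its predecessors — no tube-type symmetry and no descent to $\theta^\perp$ is needed. If you want to salvage your approach, you would have to supply the descent lemma for $\Psi\cap\theta^\perp$ and then prove the boundary inequality case by case; the paper's induction on $i$ is both shorter and uniform.
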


\begin{proof}
Since $\{e_i, h_i, f_i\}$ is an $\mathfrak{sl}_2$-triple and $h_i \in \got$, we only have to show that $h_i$ is dominant. This is easily shown by induction on $i$.

Let indeed $\gra \in \Phi^+$. Then $\grg_1= \theta$ is the highest root, thus $\gra(h_{\grg_1}) = \langle \gra, \grg_1^\vee \rangle \geq 0$. Suppose now that $\sum_{j=1}^{i-1} h_{\grg_j}$ is dominant, and let $\gra \in \Phi^+$. Then
$$
\gra(\sum_{j=1}^i h_{\grg_j}) = \sum_{j=1}^{i-1} \langle \gra, \grg_j^\vee \rangle + \langle \gra, \grg_i^\vee \rangle
$$
If $\langle \gra, \grg_i^\vee \rangle \geq 0$, then the claim follows by the inductive hypothesis. If instead $\langle \gra, \grg_i^\vee \rangle < 0$, then $\grg_j + \gra \in \Psi$ is greater than $\grg_j$, thus by the definition of $\grg_j$ the set $\{\grg_1, \ldots, \grg_{i-1}, \grg_i + \gra\}$ cannot be orthogonal. Equivalently,
$$
\sum_{j=1}^{i-1} \langle \grg_i + \gra, \grg_j^\vee \rangle = \sum_{j=1}^{i-1} \langle \gra, \grg_j^\vee \rangle \neq 0,	
$$
which by the inductive assumption means that
$\sum_{j=1}^{i-1} \langle \gra, \grg_j^\vee \rangle > 0$. On the other hand $\grg_i$ is long, thus $\langle \gra, \grg_i^\vee \rangle = -1$ and the claim follows.
\end{proof}

We now deduce some consequences of the previous constructions, relating the abelian ideals associated to the nilpotent elements $e_1, \ldots, e_r$. For $i \leq r$, let $\goa_i = \gog(2,h_i)$ be be the abelian ideal of $\gob$ defined by $h_i$ and let $\Psi_i = \Phi(2,h_i)$ be the corresponding set of positive roots.

\begin{proposition}	\label{prop:ideali-combinatorici-associati}
Let $i \leq r$, then
$$
	\Psi_i = 	\{\gra \in \Psi \; | \; \langle \gra, \grg_j^\vee \rangle = 0 \; \; \forall j > i \} = \{\gra \in \Psi \; | \; \gra \geq \grg_i \}.
$$
\end{proposition}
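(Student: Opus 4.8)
The plan is to prove both equalities by comparing the three sets
$$
\Psi_i=\Phi(2,h_i),\qquad A_i=\{\gra\in\Psi\mid\langle\gra,\grg_j^\vee\rangle=0\ \text{for all}\ j>i\},\qquad B_i=\{\gra\in\Psi\mid\gra\geq\grg_i\},
$$
showing $\Psi_i=A_i$ and $A_i=B_i$. The one input I will use repeatedly is a uniform eigenvalue bound: since $h_i$ is the dominant characteristic of $Ge_i\subseteq\calN_2$ (Proposition \ref{prop:caratteristiche-upper-canonical}), one has $\gra(h_i)=\sum_{j=1}^i\langle\gra,\grg_j^\vee\rangle$ and $|\gra(h_i)|\leq 2$ for every $\gra\in\Phi$; more generally, for any orthogonal $S\subseteq\Psi\cap\Phi_\ell$ the element $h_S=\sum_{\grb\in S}h_\grb$ is the semisimple part of an $\mathfrak{sl}_2$-triple with nilpositive $e_S\in Ge_{\card(S)}\subseteq\calN_2$ (Proposition \ref{prop:L-orbite-in-a}), so $|\gra(h_S)|\leq\height(e_S)\leq 2$. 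Recall also that $Ge_r$ is the open orbit, so $h=h_r$ and $\Psi=\Psi_r$.

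I would first settle $\Psi_i=A_i$. The inclusion $A_i\subseteq\Psi_i$ is immediate, for if $\langle\gra,\grg_j^\vee\rangle=0$ for all $j>i$ then $\gra(h_i)=\gra(h_r)=2$. For $\Psi_i\subseteq A_i$, take $\gra\in\Phi^+$ with $\gra(h_i)=2$ and fix $j>i$. Applying the bound to $S=\{\grg_1,\dots,\grg_i,\grg_j\}$, whose semisimple element is $h_S=h_i+h_{\grg_j}$ and satisfies $\grg_j(h_S)=2$ by strong orthogonality, gives $\gra(h_S)=2+\langle\gra,\grg_j^\vee\rangle\leq 2$, so $\langle\gra,\grg_j^\vee\rangle\leq 0$. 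If it were $-1$ then $\gra+\grg_j$ would be a root with $(\gra+\grg_j)(h_S)=1+2=3>2$, a contradiction; hence $\langle\gra,\grg_j^\vee\rangle=0$ for every $j>i$. This forces $\gra(h_r)=\gra(h_i)=2$, so $\gra\in\Psi$ and in fact $\gra\in A_i$, proving $\Psi_i=A_i$.

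Next I would prove $B_i\subseteq\Psi_i$: writing $\gra=\grg_i+\sum_{\grd\in\grD}c_\grd\grd$ with $c_\grd\geq 0$, dominance of $h_i$ gives $\gra(h_i)\geq\grg_i(h_i)=\langle\grg_i,\grg_i^\vee\rangle=2$, while $\gra(h_i)\leq 2$, so $\gra\in\Psi_i$. The remaining and only substantial inclusion is $A_i\subseteq B_i$, i.e.\ that every $\gra\in\Psi_i$ dominates $\grg_i$. Here both $\gra$ and $\grg_i$ lie in $\Phi(2,h_i)$, so $\gra-\grg_i$ is a $\mZ$-linear combination of the simple roots $\grd\in\grD$ with $\grd(h_i)=0$, and the point is to prove the coefficients are nonnegative. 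I would obtain this from the tube-type structure: arguing as in Remark \ref{oss:symmetric}, the reductive subalgebra $\gog(-2,h_i)\oplus\gog(0,h_i)\oplus\gog(2,h_i)$ is symmetric of Hermitian type with $\goa_i=\gog(2,h_i)$ the abelian nilradical of a cominuscule parabolic, and this datum is again of tube type. Under the identification $\{\grg_{i+1},\dots,\grg_r\}=\{\grb_1,\dots,\grb_{r-i}\}$ one has $\Psi_i=A_i=\{\gra\in\Psi\mid\gra\ \text{orthogonal to}\ \grb_1,\dots,\grb_{r-i}\}$, whose cascade-minimal root is $\grb_{r-i+1}=\grg_i$; the tube-type property (Remark \ref{oss:lower-orthogonal} and \cite[Proposition 1.5]{Pa4}) then makes $\grg_i$ the unique minimal element of $\Psi_i$ in the dominance order, exactly as $w_L(\theta)=\grg_r$ is for $\Psi$. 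Hence $\gra\geq\grg_i$ for all $\gra\in\Psi_i$, which gives $A_i\subseteq B_i$ and closes the argument.

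The main obstacle is precisely this last inclusion. The elementary eigenvalue argument pins down $\Psi_i=A_i$ with no structure theory, but the dominance bound $\gra\geq\grg_i$ seems to require recognizing $\Psi_i$ as the set of positive nilradical-roots of a tube-type Hermitian symmetric subdatum and invoking uniqueness of its minimal root. If one wishes to avoid the subsystem bookkeeping, an equivalent route is to show that $\goa_i$ is irreducible as a module over the Levi $\gog(0,h_i)$ with lowest weight $\grg_i$, so that every weight $\gra$ of $\goa_i$ satisfies $\gra\geq\grg_i$.
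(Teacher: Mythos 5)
Your proof is correct and follows essentially the same route as the paper's: the inclusion $\Psi_i\subseteq A_i$ is obtained from the height-$2$ eigenvalue bound on a characteristic of an orbit containing $e_{\{\grg_1,\dots,\grg_i,\grg_j\}}$ (the paper does this for $j=i+1$ with $h_{i+1}$ and iterates, you do all $j>i$ at once with $h_i+h_{\grg_j}$ --- a cosmetic difference), and the equality with $\{\gra\in\Psi\mid\gra\geq\grg_i\}$ comes, exactly as in the paper, from the tube-type property of Remark \ref{oss:lower-orthogonal} identifying $\grg_i$ as the unique minimal element of $A_i$. Your explicit verification of $B_i\subseteq\Psi_i$ via dominance of $h_i$ fills in a step the paper leaves implicit; no gaps.
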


\begin{proof}
Notice first of all that, if $\gra \in \Psi_i$, then $\langle \gra, \grg_{i+1}^\vee \rangle = 0$.
Indeed, we have by definition $\gra(h_{i+1}) = \gra(h_i) + \langle \gra, \grg_{i+1}^\vee \rangle$. Since $e_{i+1}$ has height 2 and $h_{i+1}$ is the dominant characteristic of $Ge_{i+1}$, the assumption $\gra(h_i) = 2$ implies that $\langle \gra, \grg_{i+1}^\vee \rangle \leq 0$. Suppose that $\langle \gra, \grg_{i+1}^\vee \rangle < 0$: then $\gra + \grg_{i+1}$ is a root, thus $\gra + \grg_{i+1} \in \Psi_{i+1}$.  On the other hand $\grg_{i+1}$ is long, thus $\langle \gra, \grg_{i+1}^\vee \rangle = -1$ and we get 
$$
	(\gra + \grg_{i+1}) (h_{i+1}) = \gra(h_i) + \grg_{i+1}(h_{i+1}) + \langle \gra, \grg_{i+1}^\vee \rangle  = 3, 
$$
contradicting that $\height(e_{i+1}) = 2$.

Thanks to the previous discussion, we see that $\Psi_i \subset \Psi_{i+1}$, and that $\langle \gra, \grg_j^\vee \rangle = 0$ whenever $\gra \in \Psi_i$ and $j > i$. On the other hand $\grg_i (h_i) = 2$, thus $\grg_i \in \Psi_i$. Thus the claim follows by Remark \ref{oss:lower-orthogonal}, as $\grg_i$ is the unique minimal element in
\[
	\{\gra \in \Psi \; | \; \langle \gra, \grg_{i+1}^\vee \rangle = \ldots = \langle \gra, \grg_r^\vee \rangle = 0  \}.
	\qedhere
\] 
\end{proof}

\begin{corollary} \label{ideali uguali}
Let $e' \in \calN_2$ and let $\goa' \subset \gob$ be the abelian ideal defined by the dominant characteristic of $Ge'$. Then $Ge' \subset \overline{Ge}$ if and only if $\goa' \subset \goa$.
\end{corollary}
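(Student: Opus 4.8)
The plan is to assemble the orbit parametrization of $X=\overline{Ge}$ with the chain of abelian ideals produced by the canonical cascade. Recall that $\{e_1,\dots,e_r\}$ is a complete system of representatives for the nonzero $G$-orbits in $X$, that $h_i$ is the dominant characteristic of $Ge_i$ by Proposition~\ref{prop:caratteristiche-upper-canonical}, and that the root set of $\goa_i=\gog(2,h_i)$ is $\Psi_i$. Proposition~\ref{prop:ideali-combinatorici-associati} gives $\Psi_i\subset\Psi_{i+1}$, hence a chain $\goa_1\subset\goa_2\subset\dots\subset\goa_r$; since $e_r=e_{S_\Psi}$ lies in the open $L$-orbit we have $Ge_r=Ge$, so by uniqueness of the dominant characteristic $h_r=h$ and $\goa_r=\goa$. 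Thus $\goa_i\subset\goa$ for every $i\leq r$.

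For the implication $\goa'\subset\goa\Rightarrow Ge'\subset\overline{Ge}$ (the easy direction, which does not use the chain) I would argue as follows. Replacing $e'$ by a suitable conjugate, we may assume that its $\mathfrak{sl}_2$-triple has the dominant characteristic $h'$ as semisimple element, so that $[h',e']=2e'$ and therefore $e'\in\gog(2,h')=\goa'$. By hypothesis $e'\in\goa'\subset\goa\subset G\goa=X$, and since $X=\overline{Ge}$ is $G$-stable we conclude $Ge'\subset X$.

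Conversely, assume $Ge'\subset\overline{Ge}$; the case $e'=0$ being trivial, suppose $e'\neq0$. As $\{e_1,\dots,e_r\}$ exhausts the nonzero $G$-orbits in $X$, we must have $Ge'=Ge_i$ for some $i\leq r$. The dominant characteristic is an invariant of the $G$-orbit, so the dominant characteristic $h'$ of $Ge'=Ge_i$ equals $h_i$, whence $\goa'=\gog(2,h')=\gog(2,h_i)=\goa_i$. Combining with $\goa_i\subset\goa$ from the first paragraph yields $\goa'\subset\goa$.

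The proof is really a synthesis of facts already in hand, so I do not anticipate a serious obstacle; the only steps requiring care are the two appeals to the uniqueness of the dominant characteristic — once to identify $\goa_r$ with $\goa$, and once to pass from the orbit equality $Ge'=Ge_i$ to the equality $\goa'=\goa_i$ of ideals — together with the choice, in the easy direction, of a representative of $Ge'$ lying in $\goa'=\gog(2,h')$.
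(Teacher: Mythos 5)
Your argument is correct and follows essentially the same route as the paper: the forward direction rests on Proposition~\ref{prop:ideali-combinatorici-associati} (giving the chain $\goa_1\subset\dots\subset\goa_r=\goa$), the fact that $e_1,\dots,e_r$ represent the nonzero $G$-orbits in $X$, and Proposition~\ref{prop:caratteristiche-upper-canonical} identifying their dominant characteristics, exactly as in the paper. The only (immaterial) difference is in the easy direction, where the paper invokes $\overline{Ge'}=G\goa'$ while you simply place a representative of $Ge'$ inside $\goa'\subset\goa\subset X$ and use $G$-stability of $X$.
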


\begin{proof}
One implication is clear from the equalities $\overline{Ge} = G \goa$ and $\overline{Ge'} = G \goa'$. The other one follows from Proposition \ref{prop:ideali-combinatorici-associati}, as the elements $e_1, \ldots, e_r$ are a complete system of representatives for the $G$-orbits in $X$, and by Proposition \ref{prop:caratteristiche-upper-canonical} the corresponding dominant characteristics are $h_1, \ldots, h_r$.
\end{proof}

As we have seen in Proposition \ref{prop:caratteristiche-upper-canonical}, the canonical orthogonal subset $S_\Psi$ gives rise to the dominant characteristic of $Ge$. We now show that the same is true if we start from any maximal orthogonal subset of $\Psi$ (that is, an orthogonal subset of $\Psi$ which is maximal with respect to the inclusion).

\begin{proposition} \label{prop:caratteristica}
Let $S \subset \Psi$ be maximal orthogonal, then $h_S = h_r$ is the dominant characteristic of $Ge$.
\end{proposition}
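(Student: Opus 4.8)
The plan is to reduce the statement to the single identity $h_S = h_r$ in $\got$, since $h_r$ is already known to be the dominant characteristic of $Ge$: indeed $e_r = e_{S_\Psi}$ lies in the open $L$-orbit of $\goa$ by Proposition \ref{prop:L-orbite-in-a}, so $Ge_r = Ge$, and Proposition \ref{prop:caratteristiche-upper-canonical} identifies $h_r$ as its dominant characteristic. The first observation I would record is that $W_L$ fixes $h_r$: for $\gra \in \grD_L$ one has $\gra(h_r) = \gra(h) = 0$, hence $s_\gra(h_r) = h_r$, and $W_L$ is generated by such reflections. Consequently $h_{w(R)} = w(h_R)$ equals $h_r$ precisely when $h_R = h_r$, for every $w \in W_L$ and $R \in \Ort(\Psi)$; this allows me to replace $S$ freely by any of its $W_L$-translates.

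I would then prove $h_S = h_r$ by induction on the number $m = \card(S \cap \Phi_s)$ of short roots of $S$, working in the reduced setting of Remark \ref{oss:symmetric} (where $G = G_0$ and $\Phi$ is simply laced or of type $B$ or $C$), which affects neither $h_S$, $h_r \in \got$ nor the group $W_L$. For the base case $m = 0$ the set $S$ consists of $r$ long roots, exactly as $S_\Psi$ does; the transitivity of $W_L$ on the orthogonal subsets of $\Psi$ with a fixed number of short and long roots (recalled in the proof of Proposition \ref{prop:L-orbite-in-a}, after \cite{RRS}) gives $S = w(S_\Psi)$ with $w \in W_L$, whence $h_S = w(h_{S_\Psi}) = h_r$.

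For the inductive step I would invoke the conversion built inside the proof of Proposition \ref{prop:L-orbite-in-a}: after translating by a suitable $w \in W_L$ so that $w(S)$ contains the highest short root $\theta_s$, one replaces $\theta_s$ by the two long roots $\theta$ and $\grb = 2\theta_s - \theta$, obtaining $S' \in \Ort(\Psi)$ with $e_{S'} \in Le_S$ and one fewer short root, so that $S'$ is again maximal orthogonal. The crux — and the only genuine computation — is the coroot identity $\theta_s^\vee = \theta^\vee + \grb^\vee$, which I would check case by case in the $\gre$-notation (in type $B$, $\gre_1^\vee = (\gre_1 + \gre_2)^\vee + (\gre_1 - \gre_2)^\vee$; in type $C$, $(\gre_1 + \gre_2)^\vee = (2\gre_1)^\vee + (2\gre_2)^\vee$). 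This identity makes the conversion preserve coroot sums, so that $h_{S'} = h_{w(S)} = w(h_S)$; applying the inductive hypothesis to $S'$ and using that $W_L$ fixes $h_r$ then yields $h_S = w^{-1}(h_r) = h_r$.

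The main obstacle I anticipate is precisely the passage through short roots: the coroot sum $h_S$ is not $W_L$-invariant, and a naive replacement of a short root by long ones need not preserve it, so the whole argument hinges on arranging the conversion so that $\theta_s^\vee = \theta^\vee + \grb^\vee$ holds, and on keeping the bookkeeping of the $W_L$-translations consistent with the $W_L$-invariance of $h_r$. By contrast, the dominance of $h_S$ along the simple roots $\gra \in \grD \setminus \grD_L$ is automatic: for these one has $\gra(h) \geq 1$, so that $\gra + \grb$ is not a root for any $\grb \in S \subset \Psi$ (it would take value at least $3$ on $h$), forcing $\langle \gra, \grb^\vee \rangle \geq 0$ and hence $\gra(h_S) \geq 0$. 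Thus only the behaviour along $\grD_L$ genuinely requires the inductive machinery above.
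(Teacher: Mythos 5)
Your proof is correct, but it takes a genuinely different route from the paper's. The paper argues uniformly via $\mathfrak{sl}_2$-theory: since $S$ is maximal, Proposition \ref{prop:L-orbite-in-a} gives $e_S = ge_r$ with $g \in L$, so $\{e_S, gh_r, gf_r\}$ is an $\mathfrak{sl}_2$-triple whose semisimple element induces the same grading as $h_r$; Kostant's conjugacy theorem for triples with a fixed nilpositive element, combined with $C_G(e_S) = C_{L}(e_S)\,C_G(e_S)^u$ and the fact that both $h_S$ and $gh_r$ lie in $\gog(0,h_r)$, forces $h_S = gh_r$, whence $\gra(h_S)=\gra(h_r)$ for every root and $h_S = h_r$ — no case analysis at all. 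You instead run a purely combinatorial induction on $\card(S\cap\Phi_s)$, resting on three ingredients: the $W_L$-invariance of $h_r$ (correct, since $\gra(h_r)=0$ for $\gra\in\grD_L$), the $W_L$-transitivity of \cite{RRS} as quoted in the proof of Proposition \ref{prop:L-orbite-in-a}, and the coroot identity $\theta_s^\vee = \theta^\vee + \grb^\vee$, which you rightly identify as the crux and which does hold in both relevant types (in $B$: $2\gre_1 = (\gre_1+\gre_2)+(\gre_1-\gre_2)$; in $C$: $\gre_1+\gre_2 = \gre_1+\gre_2$ after normalizing). Your approach buys an explicit explanation of \emph{why} the coroot sum is insensitive to the choice of maximal orthogonal subset — the short-to-long conversion preserves it on the nose — at the cost of the reduction to $G_0$, a type-by-type verification, and some bookkeeping you should make fully explicit (the existence of $w\in W_L$ with $\theta_s\in w(S)$ follows from the $p=1$, $q=0$ instance of the transitivity statement, and the maximality of $S'$ follows since $\rk_G(S')=\rk_G(S)$ keeps $e_{S'}$ in the open $L$-orbit). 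The paper's argument is shorter and case-free but leans on deeper structure theory of centralizers. Both proofs are complete.
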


\begin{proof}
To show the statement we will freely make use of some well known facts concerning the $\mathfrak{sl}_2$-triples in $\gog$ and the structure of the centralizer of a nilpotent element, for details see e.g. \cite[Section 3.4 and 3.7]{CMcG}.

By Proposition \ref{prop:L-orbite-in-a}, $e_S$ is in the open $L$-orbit $L e_r \subset \goa$ if and only if $\rk_G(S)$ is maximal, if and only if $S$ is a maximal orthogonal subset. Suppose that $e_S = ge_r$ with $g \in L$ and set $h'_S = gh_r$ and $f'_S = gf_r$, then $\{e_S, h'_S, f'_S\}$ is an $\mathfrak{sl}_2$-triple containing $e_S$ as the nilpositive element. Since all the eigenspaces $\gog(i,h_r)$ are $L$-stable and since $g \in L$, it follows that $h'_S$ induces the same grading as $h_r$. Since $\{e_S, h_S, f_S\}$ is also an $\mathfrak{sl}_2$-triple containing $e_S$ as the nilpositive element, by a theorem of Kostant there exists $v \in C_G(e_S)^u$ such that $vh'_S = h_S$ and $vf'_S = f_S$. Denote $C = C_G(e_S)$ and set
$$\goc(i) = \goc \cap \gog(i,h_S') = \goc \cap \gog(i,h_r).$$
Then $\goc = \bigoplus_{i \geq 0} \goc(i)$ and $\goc^u = \bigoplus_{i > 0} \goc(i)$, thus $h_S - h'_S \in \bigoplus_{i > 0} \gog(i,h_r)$. Since by construction both $h'_S$ and $h_S$ are in $\gog(0,h)$ it follows that $h'_S = h_S$.

Thus $h_S$ and $h_r$ induce the same grading on $\gog$, and it follows that $\gra(h_S) = \gra(h_r)$ for all $\gra \in \grD$.  Therefore $h_S$ is also a dominant characteristic for $Ge_r$, and the claim follows by the uniqueness of the dominant characteristic.
\end{proof}

\begin{corollary}	\label{cor:max-orthogonal}
Let $S \subset \Psi$ be maximal orthogonal, then $S$ is maximal orthogonal in $\Phi^+ \setminus \Phi_L$ as well.
\end{corollary}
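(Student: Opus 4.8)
The plan is to leverage Proposition \ref{prop:caratteristica}, which asserts that for a maximal orthogonal $S \subset \Psi$ the partial sum $h_S = \sum_{\gra \in S} h_\gra$ equals the dominant characteristic $h$ of $Ge$. Since $S \subset \Psi \subset \Phi^+ \setminus \Phi_L$ consists of pairwise orthogonal roots, it is automatically an orthogonal subset of $\Phi^+ \setminus \Phi_L$; the only thing to verify is its maximality there. To set this up I would first record that, because $e$ has height $2$ and $h$ is dominant, every $\gra \in \Phi^+$ satisfies $\gra(h) \in \{0,1,2\}$, with $\gra \in \Phi_L$ precisely when $\gra(h) = 0$. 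Hence $\Phi^+ \setminus \Phi_L$ is the disjoint union $\Phi(1) \sqcup \Psi$, where $\Psi = \Phi(2)$.

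Now take any $\grb \in \Phi^+ \setminus \Phi_L$ orthogonal to every element of $S$, and aim to conclude $\grb \in S$. If $\grb \in \Psi$ this is immediate from the maximality of $S$ inside $\Psi$. The only remaining case is $\grb \in \Phi(1)$, and this is where Proposition \ref{prop:caratteristica} enters. Orthogonality of $\grb$ to each $\gra \in S$ means $(\grb,\gra) = 0$, equivalently $\langle \grb, \gra^\vee \rangle = 0$; summing over $\gra \in S$ gives $\grb(h_S) = \sum_{\gra \in S} \langle \grb, \gra^\vee \rangle = 0$. But $h_S = h$ by Proposition \ref{prop:caratteristica}, so $\grb(h) = 0$, contradicting $\grb \in \Phi(1)$, i.e. $\grb(h) = 1$. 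Thus no root of grade $1$ can be orthogonal to all of $S$, and $S$ is maximal orthogonal in $\Phi^+ \setminus \Phi_L$.

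I do not anticipate a genuine obstacle: the entire argument rests on the identity $h_S = h$, which has just been established, together with the elementary translation of orthogonality in the root system into the vanishing of $\grb(h_S)$. The single point deserving a moment's care is the clean separation of $\Phi^+ \setminus \Phi_L$ into its two grading components $\Phi(1)$ and $\Psi$, and the observation that the grade-$1$ roots are exactly those excluded by the condition $h_S = h$; the grade-$2$ case is handled directly by the hypothesis that $S$ is already maximal in $\Psi$.
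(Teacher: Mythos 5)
Your argument is correct and is essentially the paper's own proof, just written out in more detail: the paper likewise invokes Proposition \ref{prop:caratteristica} to get $h_S = h_r$ and then observes that $\gra(h_S) = 1 \neq 0$ for $\gra \in \Phi(1)$, which is exactly your obstruction to a grade-one root being orthogonal to all of $S$. Your explicit decomposition $\Phi^+ \setminus \Phi_L = \Phi(1) \sqcup \Psi$ and the handling of the grade-two case via maximality in $\Psi$ are the implicit remaining steps of the paper's one-line proof.
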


\begin{proof}
By Proposition \ref{prop:caratteristica} we have $h_S = h_r$, thus   $\gra(h_S) = 1$ for all $\gra \in \Phi(1)$.
\end{proof}

We now associate to any of the dominant characteristics $h_i$ a Levi subgroup of $G$ which will be fundamental in order to study the fibers of the resolution $\phi : \wt X \rightarrow X$.

Given $i \leq r$, let $P_i \subset G$ be the corresponding parabolic subgroup defined by $h_i$ and $L_i \subset P_i$ the corresponding Levi factor, with set of simple roots
$$
	\grD_{L_i} = \{\gra \in \grD \; | \; \gra(h_i) = 0\}.
$$
Let also $G_i \subset G$ be the symmetric subgroup of $G$ containing $L_i$ constructed as in Remark \ref{oss:symmetric}. By Proposition \ref{prop:ideali-combinatorici-associati}, we see that the set of simple roots of $G_i$ is 
$$
	\grD_{G_i} = \grD_{L_i} \cup \{\grg_i\}
$$
Let $\grD_{G_i}(\grg_i) \subset \grD_{G_i}$ be the subset corresponding to the connected component of the Dynkin diagram of $G_i$ containing $\grg_i$, and let
$$
	\grD_{L_i}^* = \grD_{L_i} \setminus \grD_{G_i}(\grg_i)
$$

Notice that $\grg_1$, which is the highest root of $\Phi$, also corresponds to the highest root of $\grD_{G_i}(\grg_i)$. Since $\grg_i$ occurs in $\grg_1$ with coefficient 1 as a simple root of $\grD_{G_i}$, we see that
$$
	\supp(\grg_1 - \grg_i) = \grD_{L_i} \cap \grD_{G_i}(\grg_i).
$$
As $\grg_i > \grg_r$, we obtain the inclusion
$$	\grD_{L_i} \setminus \grD_{L_i}^* \subset \grD_{L_r} \setminus \grD_{L_r}^* \subset \grD_{L_r} = \grD_L. \\
$$

We denote by $L_i^* \subset G$ the Levi subgroup associated to the subset $\grD_{L_i}^* \subset \grD$. Since $\Psi_i$ is contained in the set of positive roots of $G_i$ corresponding to the connected component $\grD_{G_i}(\grg_i) \subset \grD_{G_i}$, we see that for all $\gra \in \grD_{L_i}^*$ the root vectors $e_\gra$ and $f_\gra$ are in the Lie algebra of the centralizer $C_{L_i}(\goa_i)$.

Summarizing the previous discussions, we get the following.

\begin{proposition} \label{prop:centralizzatore-ridotto}
Denote $C_i = C_{L_i}(\goa_i)^\circ$. Then $C_i$ is a normal reductive subgroup of $L_i$, and $B \cap C_i$ is a Borel subgroup therein. Moreover,
$$L_i/L_i \cap P  \; \simeq \; L_i^*/L_i^* \cap P \; \simeq \; C_i/C_i \cap P.$$
\end{proposition}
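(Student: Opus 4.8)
The plan is to reduce the whole statement to the orthogonal decomposition of the Dynkin diagram of $L_i$ produced by the symmetric subgroup $G_i$. Since adjoining the node $\grg_i$ to $\grD_{L_i}$ only creates edges incident to $\grg_i$, the two sets $\grD_{L_i}\cap\grD_{G_i}(\grg_i)$ and $\grD_{L_i}^*$ are each a union of connected components of the Dynkin diagram of $G$ restricted to $\grD_{L_i}$; in particular no root of $\Phi$ is supported on both, and the corresponding root subsystems are mutually orthogonal. I would denote by $\gok$ the simple ideal of $\gog_i$ whose simple roots are $\grD_{G_i}(\grg_i)$, and recall from Remark \ref{oss:symmetric} and Proposition \ref{prop:ideali-combinatorici-associati} (applied to $G_i$) that $\goa_i\subset\gok$ is the abelian nilradical of a maximal parabolic of $\gok$ and that $\Psi_i$ spans the same subspace of $\got^*$ as $\grD_{G_i}(\grg_i)$.

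Next I would identify $\goc_i=\Lie(C_i)$ as a $\got$-stable subalgebra of $\gol_i$. Because $[\gol_i,\goa_i]\subseteq\goa_i$, a root vector $e_\gra$ with $\gra\in\langle\grD_{L_i}\rangle\cap\Phi$ centralizes $\goa_i$ precisely when $\gra+\grb\notin\Phi$ for all $\grb\in\Psi_i$. If $\gra$ is supported on $\grD_{L_i}^*$ this holds since $\gra$ and $\goa_i\subset\gok$ lie in distinct simple ideals of $\gog_i$; conversely, if $\gra$ is supported on $\grD_{L_i}\cap\grD_{G_i}(\grg_i)$ then $\gra$ does not centralize $\goa_i$, because the kernel of the action of the isotropy Levi $\gok\cap\gol_i$ on $\goa_i$ is an ideal which contains no simple factor (each simple factor is adjacent to $\grg_i$ in the connected diagram $\grD_{G_i}(\grg_i)$, and so moves the minimal weight $\grg_i$ of $\goa_i$) and is therefore central. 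Hence the roots of $C_i$ are exactly $\langle\grD_{L_i}^*\rangle\cap\Phi$ and its Cartan is $\got_\goc=\{h\in\got\mid\grb(h)=0\ \forall\,\grb\in\Psi_i\}$. As $\grD_{L_i}^*\perp\Psi_i$, every coroot $h_\gra$ with $\gra\in\grD_{L_i}^*$ lies in $\got_\goc$, so $\goc_i$ is the sum of the semisimple algebra with root system $\langle\grD_{L_i}^*\rangle\cap\Phi$ and a central toral complement; thus $C_i$ is reductive. It is normal in $L_i$ as the kernel of the action of $L_i$ on the $L_i$-stable space $\goa_i$, and $\gob\cap\goc_i=\got_\goc\oplus\bigoplus_{\gra\in\langle\grD_{L_i}^*\rangle\cap\Phi^+}\gog_\gra$ is a Borel subalgebra of $\goc_i$, so $B\cap C_i$ is a Borel subgroup of $C_i$.

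The core of the flag-variety assertion is the claim
\[
	\grD_{L_i}\cap\grD_{G_i}(\grg_i)\subseteq\grD_L,
\]
which I call $(\star)$ and would deduce from Proposition \ref{prop:ideali-combinatorici-associati}. Indeed, for $j>i$ the root $\grg_j$ is long and satisfies $\grg_j(h_i)=\sum_{k\leq i}\langle\grg_j,\grg_k^\vee\rangle=0$, so it is a root of $\gol_i$; moreover $\langle\gra,\grg_j^\vee\rangle=0$ for every $\gra\in\Psi_i$ by the first description of $\Psi_i$ in Proposition \ref{prop:ideali-combinatorici-associati}, so $\grg_j\perp\Span(\Psi_i)=\Span(\grD_{G_i}(\grg_i))$. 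Therefore, using $h_r=h_i+\sum_{j>i}h_{\grg_j}$, for $\gra\in\grD_{L_i}\cap\grD_{G_i}(\grg_i)$ we get $\gra(h_r)=\gra(h_i)+\sum_{j>i}\langle\gra,\grg_j^\vee\rangle=0$, and since $\grD_L=\{\gra\in\grD\mid\gra(h_r)=0\}$ this is exactly $(\star)$.

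Finally I would assemble the three isomorphisms. Since $h_r$ is dominant, all positive roots of $L_i$ lie in $\gop$, so $L_i\cap P$ is the standard parabolic of $L_i$ whose crossed simple roots are $I_i:=\grD_{L_i}\setminus\grD_L$, and by $(\star)$ we have $I_i\subseteq\grD_{L_i}^*$. Because $\grD_{L_i}=(\grD_{L_i}\cap\grD_{G_i}(\grg_i))\sqcup\grD_{L_i}^*$ is an orthogonal decomposition, the partial flag variety $L_i/L_i\cap P$ splits as a product in which the factor attached to $\grD_{L_i}\cap\grD_{G_i}(\grg_i)$ carries no crossed node and hence contributes a point; thus $L_i/L_i\cap P$ is the partial flag variety of the root subsystem $\langle\grD_{L_i}^*\rangle\cap\Phi$ with crossed nodes $I_i$. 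The very same description holds for $L_i^*/L_i^*\cap P$ and for $C_i/C_i\cap P$: both groups have $\langle\grD_{L_i}^*\rangle\cap\Phi$ as (semisimple) root system, both meet $P$ in the standard parabolic crossing exactly $\grD_{L_i}^*\setminus\grD_L=I_i$, and a partial flag variety depends only on the root system and the crossed simple roots. This gives $L_i/L_i\cap P\simeq L_i^*/L_i^*\cap P\simeq C_i/C_i\cap P$. The main obstacle is the centralizer computation of the second paragraph together with $(\star)$; once the orthogonality $\grg_j\perp\Psi_i$ for $j>i$ is extracted from Proposition \ref{prop:ideali-combinatorici-associati}, the flag-variety identifications are formal.
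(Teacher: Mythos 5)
Your argument is correct and follows essentially the same route as the paper, where the proposition is stated as a summary of the preceding discussion: the same orthogonal splitting $\grD_{L_i}=(\grD_{L_i}\cap\grD_{G_i}(\grg_i))\sqcup\grD_{L_i}^*$ and the same key inclusion $(\star)$, which the paper obtains from $\supp(\grg_1-\grg_i)\subseteq\supp(\grg_1-\grg_r)\subseteq\grD_L$ (using $\grg_i>\grg_r$) rather than by evaluating $\gra(h_r)$ directly as you do. Your second paragraph pins down the root system of $C_i$ exactly, whereas the paper only records that $e_\gra,f_\gra\in\goc_i$ for $\gra\in\grD_{L_i}^*$; this extra precision is sound but not strictly needed for the displayed isomorphisms, since any hypothetical extra simple factors of $C_i$ would have simple roots in $\grD_{L_i}\cap\grD_{G_i}(\grg_i)\subseteq\grD_L$ and hence contribute only a point to $C_i/C_i\cap P$.
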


\subsection{The fibers of the resolution.}

In this subsection we will study the fibers of the resolution $\phi : \wt X \rightarrow X$, and will show that they are flag varieties under the action  of a suitable subgroup of the centralizer. As $X$ is normal,  the fibers of $\phi$ are connected  by Zariski's main theorem, and complete because $\phi$ is projective.

Regard $\wt X$ as a closed subvariety of the trivial bundle $G \times_P \calN \simeq G/P \times \calN$, where the latter isomorphism is given by $[g,z] \mapsto (gP,gz)$. Thus we have a commutative diagram
$$
	\xymatrix{
	\wt X \ar@{->}[r]\ar@{->}[d]_{\phi} & G/P\times \calN \ar@{->}[d] \\
	X \ar@{->}[r] & \calN \\
	}
$$
In particular, for $x \in X$, the projection $\pi : \wt X \rightarrow G/P$ restricts to a closed embedding $\pi_x : \phi^{-1}(x) \rightarrow G/P$.

Notice that $\phi^{-1}(x)$ is a flag variety under the action of $C_G(x)$. If indeed $x \in \goa$, then $Gx \cap \goa = Lx$ thanks to Proposition \ref{prop:L-orbite-in-a}, thus
$$
	\phi^{-1}(x) = \{[g,y] \; | \; gy = x\} = \{[g, x] \;|\; g \in C_G(x)\} \simeq C_G(x) / C_P(x).
$$
Therefore, if $x \in X$ and $g x \in \goa$, we get
$$
	\phi^{-1}(x) \; \simeq \; C_G(x)/C_{g.P}(x) 
$$

We now study the fibers $\phi^{-1}(e_i)$ for $i \leq r$. By standard results on the centralizers of nilpotent elements and the associated characteristics (see e.g. \cite[Section 3.7]{CMcG}), we have $C_G(e_i) = C_{L_i}(e_i) \, C_G(e_i)^u$ and $C_G(e_i)^u \subset P_i^u$.  Setting $C_i = C_{L_i}(\goa_i)^\circ$, by Proposition \ref{prop:centralizzatore-ridotto} 
we get then
\begin{equation}	\label{eq:fibra}
	\phi^{-1}(e_i) \; \simeq \; C_G(e_i) / C_P(e_i) \; \simeq \; C_{L_i}(e_i) / C_{L_i}(e_i) \cap P \; \simeq \; C_i / C_i \cap P.
\end{equation}
Thus $\phi^{-1}(e_i)$ is a flag variety under the action of $C_i$, and again by Proposition \ref{prop:centralizzatore-ridotto}  we see that
$$
	B \, \backslash \, \phi^{-1}(Be_i) \; \simeq B \cap C_i \, \backslash \, \phi^{-1}(e_i) \; \simeq \; B \cap C_i \, \backslash \, C_i \, / \, C_i \cap P 
$$
is identified with the set of the Schubert cells in a partial flag variety for the reductive group $C_i$.

More generally, we have the following. 

\begin{proposition}	\label{prop:fibre-bandiere}
Let $i \leq r$, let $w \in W^{P_i}$ and $g \in L_i$. Then $\phi^{-1}(wge_i)$ is a flag variety under $w.C_i$, and $B \cap w. C_i  = w.(B \cap C_i)$ is a Borel subgroup of $w.C_i$.
\end{proposition}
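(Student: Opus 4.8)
The plan is to deduce both assertions from the $G$-equivariance of $\phi$ together with the description of the fiber $\phi^{-1}(e_i)$ recorded in \eqref{eq:fibra}. Since $\phi$ is $G$-equivariant, for any $n \in G$ one has $\phi^{-1}(nx) = n\,\phi^{-1}(x)$; taking $n = wg$ gives $\phi^{-1}(wge_i) = wg\cdot \phi^{-1}(e_i)$. By \eqref{eq:fibra} the fiber $\phi^{-1}(e_i)$ is a single $C_i$-orbit isomorphic to the flag variety $C_i/C_i\cap P$, so its translate $wg\cdot\phi^{-1}(e_i)$ is a single orbit under the conjugate group $(wg).C_i$, and hence is again a flag variety. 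Now $C_i$ is normal in $L_i$ by Proposition \ref{prop:centralizzatore-ridotto} and $g \in L_i$, so $g.C_i = C_i$ and therefore $(wg).C_i = w.C_i$. This already shows that $\phi^{-1}(wge_i)$ is a flag variety under $w.C_i$.

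For the statement about Borel subgroups, recall from Proposition \ref{prop:centralizzatore-ridotto} that $B\cap C_i$ is a Borel subgroup of $C_i$, so its conjugate $B' := w.(B\cap C_i)$ is a Borel subgroup of $w.C_i$. It remains to identify $B'$ with $B\cap w.C_i$, and the crux is the inclusion $B'\subseteq B$, equivalently $w.(B\cap C_i)\subseteq B$. To establish it I would decompose $B\cap C_i$ into torus and root-subgroup parts: since $C_i$ is normal in $L_i$, the torus $T$ normalizes $C_i$, so $\goc_i = \Lie(C_i)$ is $T$-stable and $B\cap C_i$ is generated by $T\cap C_i$ together with the root subgroups $U_\gra$ associated to the positive roots $\gra$ of $C_i$. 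As $C_i\subseteq L_i$, every such $\gra$ lies in the set $\Phi^+_{L_i}$ of positive roots of $L_i$. The torus part satisfies $w.(T\cap C_i)\subseteq w.T = T\subseteq B$; for the unipotent part, the defining property of the minimal length coset representatives yields $w(\Phi^+_{L_i})\subseteq\Phi^+$ for $w\in W^{P_i}$, so $w.U_\gra = U_{w(\gra)}\subseteq B$ for every positive root $\gra$ of $C_i$. Hence $w.(B\cap C_i)\subseteq B$, and therefore $B' = w.(B\cap C_i)\subseteq B\cap w.C_i$.

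To upgrade this inclusion to the desired equality I would invoke the maximality and self-normalizing property of Borel subgroups. The group $H := B\cap w.C_i$ is a closed subgroup of $B$, hence solvable, and it contains the Borel subgroup $B'$ of $w.C_i$. Since $B'$ is connected we have $B'\subseteq H^\circ$; as $B'$ is maximal among connected solvable subgroups of $w.C_i$ while $H^\circ$ is connected solvable, it follows that $H^\circ = B'$. Because Borel subgroups are self-normalizing, $H\subseteq N_{w.C_i}(H^\circ) = N_{w.C_i}(B') = B'$, forcing $H = B'$; this gives $B\cap w.C_i = w.(B\cap C_i)$ and finishes the argument. The main obstacle is precisely the inclusion $w.(B\cap C_i)\subseteq B$: everything hinges on relating the positive roots of the centralizer $C_i$ to $\Phi^+_{L_i}$ and on the characterization of $W^{P_i}$ as sending $\Phi^+_{L_i}$ into $\Phi^+$, whereas the passage from inclusion to equality is a formal consequence of the self-normalizing property of Borel subgroups.
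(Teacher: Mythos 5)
Your proposal is correct and follows essentially the same route as the paper: equivariance of $\phi$ together with the normality of $C_i$ in $L_i$ handles the first claim, and the second claim rests on the inclusion $w.(B\cap C_i)\subseteq B\cap w.C_i$ (which the paper gets from $w.(B\cap C_i)\subseteq w.(B\cap L_i)\cap w.C_i$ for $w\in W^{P_i}$, exactly the root-theoretic fact you spell out) followed by the observation that a solvable subgroup containing a Borel subgroup must equal it. Your expansion of that last step via self-normalization, and of the inclusion via root subgroups, just makes explicit what the paper leaves terse.
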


\begin{proof}
Since $C_i$ is normal in $L_i$, equation \eqref{eq:fibra} together with Proposition \ref{prop:centralizzatore-ridotto} imply that $\phi^{-1}(wge_i)$ is a flag variety under $(w g).C_i = w.C_i$.

Since $w \in W^{P_i}$, we have $w.(B \cap C_i) \subset w.(B \cap L_i) \cap w.C_i \subset B \cap w.C_i$. Thus by Proposition \ref{prop:centralizzatore-ridotto} we see that $B \cap w.C_i$ contains a Borel subgroup of $w.C_i$. On the other hand $B \cap w.C_i$ is a solvable group, hence $B \cap w.C_i = w.(B \cap C_i)$ is a Borel subgroup of $w.C_i$.
\end{proof}

Let $R \in \Ort(X)$. Then by Corollary \ref{cor:orbite-in-tildeX} and Proposition \ref{prop:B-orbits-in-X} we have
$$
	\phi^{-1}(B e_R) = \bigsqcup_{w(S) = R} B w \tilde e_S,
$$
where the union runs over all the pairs $(w,S) \in W^P \times \Ort(\Psi)$ such that $w(S) = R$. By abuse of notation, we also write
$$
	\phi^{-1}(R) = \{(w,S) \in W^P \times \Ort(\Psi) \; | \;  w(S) = R\}.
$$
We regard $\phi^{-1}(R)$ as a partially ordered set, with the order induced by inclusion of orbit closures in $\widetilde X$. 

\begin{corollary}	\label{cor:ammissibile}
Let $R \in \Ort(X)$, then the following hold.
\begin{itemize}
	\item[i)] $\phi^{-1}(B e_R)$ contains a unique closed $B$-orbit.
	\item[ii)] There exists a unique pair $(w,S) \in \phi^{-1}(R)$ such that $w$ has minimal length, in which case $Bw \tilde e_S$ is the unique closed $B$-orbit in $\phi^{-1}(B e_R)$.
	\item[iii)]	There exists a unique element of minimal length $w \in W^P$ such that $w^{-1}(R) \subset \Psi$, in which case
	$Bw \tilde e_{w^{-1}(R)}$ is the unique closed $B$-orbit inside $\phi^{-1}(B e_R)$.
\end{itemize}
\end{corollary}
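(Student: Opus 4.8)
The plan is to reduce the whole statement to the elementary fact that a flag variety contains a unique closed orbit under a Borel subgroup, namely its unique Borel-fixed point. First I would dispose of the combinatorial bookkeeping: every element of $\phi^{-1}(R)$ has the form $(w,w^{-1}(R))$ with $w\in W^P$ and $w^{-1}(R)\subset\Psi$, so $w\mapsto(w,w^{-1}(R))$ identifies $\phi^{-1}(R)$ with $\{w\in W^P\mid w^{-1}(R)\subset\Psi\}$ and a pair is determined by its first entry. Next I would bring $e_R$ into standard position. By Proposition \ref{prop:L-orbite-in-a} (together with Proposition \ref{prop:G-orbite-vs-L-orbite}) the element $e_R$ lies in a unique orbit $Ge_i$; writing $e_R=ye_i$ and decomposing $y\in Bw P_i$ with $w\in W^{P_i}$, and using that $P_i^u$ acts trivially on $\goa_i$ (since $e_i$ has height $2$), one may absorb $P_i^u$ and replace $e_R$ by an element of its $B$-orbit so that $e_R=wge_i$ with $w\in W^{P_i}$ and $g\in L_i$. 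Proposition \ref{prop:fibre-bandiere} then says that the fiber $F=\phi^{-1}(e_R)$ is a flag variety under $w.C_i$, with $B\cap w.C_i=w.(B\cap C_i)$ a Borel subgroup therein.

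Then I would transfer the problem to the fiber. Since $\phi^{-1}(Be_R)\to Be_R\cong B/C_B(e_R)$ is a $B$-equivariant fibration with fiber $F$, it is the associated bundle $B\times_{C_B(e_R)}F$, and the $B$-orbits in $\phi^{-1}(Be_R)$ correspond, compatibly with the closure order, to the $C_B(e_R)$-orbits in $F$. The key point is that $w.C_i$ centralizes $e_R$: indeed $C_i=C_{L_i}(\goa_i)^\circ$ is normal in $L_i$ and centralizes $e_i$, so $w.C_i\cdot wge_i=wC_ige_i=wgC_ie_i=wge_i$. Hence $B\cap w.C_i\subset C_B(e_R)$ is a Borel subgroup of $w.C_i$. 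Because a connected solvable group acting on a complete variety has closed orbits only at its fixed points, and because $C_B(e_R)$ contains the Borel subgroup $B\cap w.C_i$ of $w.C_i$, every $C_B(e_R)$-fixed point of $F$ is in particular fixed by $B\cap w.C_i$; but a flag variety has a unique Borel-fixed point $p_0$. Thus $C_B(e_R)$ has the unique fixed point $p_0$, yielding a unique closed $C_B(e_R)$-orbit, hence a unique closed $B$-orbit $Bp_0$ in $\phi^{-1}(Be_R)$, which proves (i). Moreover $p_0$, being the Borel-fixed point, lies in the closure of every $B\cap w.C_i$-orbit, hence of every $C_B(e_R)$-orbit of $F$; translating through the bundle correspondence, $Bp_0\subset\overline{Bq}$ for every $B$-orbit $Bq\subset\phi^{-1}(Be_R)$, so $Bp_0$ is the unique minimum of the poset $\phi^{-1}(R)$.

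Finally I would read off (ii) and (iii) by projecting to $G/P$. Writing $Bp_0=Bw_0\tilde e_{S_0}$ with $(w_0,S_0)\in\phi^{-1}(R)$, the $B$-equivariant projection $\pi:\wt X\to G/P$ sends $Bw\tilde e_S$ to the Schubert cell $BwP/P$, so $Bp_0\subset\overline{Bw\tilde e_S}$ forces $Bw_0P/P\subset\overline{BwP/P}$, that is $w_0\leq w$ for every $(w,S)\in\phi^{-1}(R)$. Thus $w_0$ is the unique Bruhat-minimum, in particular the unique element of minimal length with $w_0^{-1}(R)\subset\Psi$, and then $S_0=w_0^{-1}(R)$ is forced; this is exactly the content of (ii) and (iii). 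I expect the main obstacle to be the second paragraph: carefully justifying the associated-bundle correspondence between $B$-orbits on $\phi^{-1}(Be_R)$ and $C_B(e_R)$-orbits on $F$ in a way that preserves closure relations, together with pinning down that the Borel $B\cap w.C_i$ of the reductive group $w.C_i$ really sits inside $C_B(e_R)$ so that uniqueness of the Borel-fixed point applies; the reduction $e_R=wge_i$ and the verification $w.C_i\subset C_G(e_R)$ are the technical heart of the argument.
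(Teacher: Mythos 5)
Your proposal is correct and follows essentially the same route as the paper: both reduce via Proposition \ref{prop:fibre-bandiere} to the fact that $\phi^{-1}(e_R)$ is a flag variety under $w.C_i$ with $B\cap w.C_i$ a Borel subgroup, deduce that the fiber over the orbit has a unique closed $B$-orbit (the paper phrases this via the identification of $B\backslash\phi^{-1}(Be_R)$ with Schubert cells, you via the unique Borel-fixed point, which is the same fact), and then obtain (ii) and (iii) by projecting to $G/P$. Your extra care with the associated-bundle correspondence and the verification that $w.C_i$ centralizes $e_R$ only makes explicit what the paper's displayed isomorphism leaves implicit.
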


\begin{proof}
i) Let $i \leq r$ be such that $e_R \in Ge_i$, and let $w \in W^{P_i}$ and $g \in L_i$ be such that 
$e_R = w g e_i$. Then by Proposition \ref{prop:fibre-bandiere} the fiber $\phi^{-1}(e_R)$ is homogeneous under $w.C_i$, and $B \cap w.C_i$ is a Borel subgroup in $w.C_i$. Since
$$
	B \backslash \phi^{-1}(B e_R) \simeq (B \cap w.C_i) \backslash \phi^{-1}(e_R)
$$
and $\phi^{-1}(e_R)$ is a flag variety for $w.C_i$, it follows in particular that $\phi^{-1}(B e_R)$ contains a unique closed $B$-orbit. Thus we have proved the first statement.

ii) Notice that every element $(w,S) \in \phi^{-1}(R)$ is uniquely determined by its first component $w$, via the equality $S = w^{-1}(R)$. On the other hand, projecting on $G/P$ we see that if $Bw \tilde e_S \subset \overline{ Bw' \tilde e_{S'}}$ then $w \leq w'$ as well. Thus the claim follows from i).

iii) It follows from ii), by noticing that
\[
	\phi^{-1}(R) = \{(w,w^{-1}(R)) \; | \; w \in W^P \text{ and } w^{-1}(R) \subset \Psi.\}
	\qedhere
\]
\end{proof}

\begin{definition}
Let $w \in W^P$ and $S \in \Ort(\Psi)$. We say that the pair $(w,S)$ is \textit{admissible} if $Bw \tilde e_S$ is closed inside $\phi^{-1}(Bwe_S)$.
\end{definition}

As an immediate consequence of Corollary \ref{cor:ammissibile}, we see that the admissible pairs are in bijection with the $B$-orbits in $X$. Notice that a pair $(w,S)$ is always admissible if $S$ is maximal: indeed in this case $we_S \in Ge_r$ is in the open $G$-orbit of $X$, and since $\phi$ is birational it follows that $\phi^{-1}(Bwe_S) = Bw \tilde e_S$ is a single $B$-orbit.

In the description of the fiber $\phi^{-1}(R)$ we can be even more explicit.

\begin{corollary}	\label{cor:bruhat-fibre}
Let $R \in \Ort(X)$ and suppose that $e_R \in G e_i$. Let $(w,S)\in W^{P_i} \times \Ort(\Psi_i)$ be the unique pair such that $w(S) = R$, then the map 
$$
	W_{L_i^*}^{L_i^* \cap P} \longrightarrow \phi^{-1}(R) \qquad \qquad u \longmapsto \Big( (wu)^P, (wu)_P(S) \Big)
$$
is an order isomorphism.
\end{corollary}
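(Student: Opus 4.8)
The plan is to reduce the statement to the classical fact that, for the partial flag variety $L_i^*/(L_i^* \cap P)$, the poset of Schubert cells ordered by inclusion of closures is isomorphic to $W_{L_i^*}^{L_i^* \cap P}$ with the Bruhat order, and then to match this identification with the explicit combinatorial map of the statement. Throughout I use the fibration description of $\phi^{-1}(Be_R)$ established in Proposition \ref{prop:fibre-bandiere} and Corollary \ref{cor:ammissibile}.

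First I would rewrite the map geometrically. Since $w \in W^{P_i}$ we have $\ell(wu) = \ell(w) + \ell(u)$ for every $u \in W_{L_i} \supseteq W_{L_i^*}$, and decomposing $wu = (wu)^P (wu)_P$ with $(wu)_P \in W_L \subset P$, the defining relation of $\wt X = G \times_P \goa$ gives
$$
	[wu, e_S] = [(wu)^P, (wu)_P \, e_S] = [(wu)^P, e_{(wu)_P(S)}],
$$
so that the $B$-orbit attached to $u$ is exactly $B[wu,e_S] = B(wu)^P \tilde e_{(wu)_P(S)}$, i.e.\ the orbit labelled by $\bigl((wu)^P, (wu)_P(S)\bigr)$. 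It therefore suffices to show that $u \mapsto B[wu,e_S]$ is an order isomorphism of $W_{L_i^*}^{L_i^* \cap P}$ onto $\phi^{-1}(R)$.

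Next I would describe the fiber over $e_R$. Since $e_R$ lies in the open $G$-orbit of $\overline{Ge_i}$, the subset $S = w^{-1}(R)$ is maximal orthogonal in $\Psi_i$ by Proposition \ref{prop:L-orbite-in-a}, hence $e_S$ lies in the open $L_i$-orbit of $\goa_i$ and, using $e_R = \dot w e_S$ (up to the $T$-action) together with $G$-equivariance, $\phi^{-1}(e_R) = \dot w \cdot \phi^{-1}(e_S)$ is a flag variety under $w.C_i$ with Borel subgroup $B \cap w.C_i = w.(B \cap C_i)$ by Proposition \ref{prop:fibre-bandiere}. As in \eqref{eq:fibra} we identify $\phi^{-1}(e_S) \simeq C_i/(C_i \cap P)$ with base point $\tilde e_S$; since $C_i \subseteq C_G(e_S)$ centralizes $e_S \in \goa_i$, for $u \in W_{L_i^*}^{L_i^* \cap P} = W_{C_i}^{C_i \cap P}$ (Proposition \ref{prop:centralizzatore-ridotto}) we may pick a representative $\dot u \in C_i$ fixing $e_S$, so that $[\dot u, e_S] = \dot u \cdot \tilde e_S$ is the $T$-fixed point of $\phi^{-1}(e_S)$ indexed by $u$, whose $(B \cap C_i)$-orbit is the Schubert cell $\scrC(u)$. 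Translating by $\dot w$ and using the identification $B \backslash \phi^{-1}(Be_R) \simeq (B \cap w.C_i)\backslash \phi^{-1}(e_R)$ from the proof of Corollary \ref{cor:ammissibile}, the $B$-orbit of $[wu,e_S]$ corresponds to $\scrC(u)$; hence $u \mapsto B[wu,e_S]$ is a bijection onto $\phi^{-1}(R)$.

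Finally I would transport the order. Each step of the above chain --- the $\dot w$-translation, the equivariant isomorphism $C_i/(C_i \cap P) \simeq L_i^*/(L_i^* \cap P)$ of Proposition \ref{prop:centralizzatore-ridotto}, and the identification of the $B$-orbits on $\phi^{-1}(Be_R)$ with the $(B\cap w.C_i)$-orbits on the fiber --- is realized by morphisms of varieties and therefore preserves inclusions of orbit closures. Thus $\phi^{-1}(R)$ is isomorphic, as a poset, to the poset of Schubert cells of $L_i^*/(L_i^* \cap P)$ for the Borel $B \cap L_i^*$, which by the classical description of the Bruhat decomposition of a partial flag variety is $W_{L_i^*}^{L_i^* \cap P}$ ordered by $\leq$, with $\scrC(u_1) \subseteq \overline{\scrC(u_2)}$ if and only if $u_1 \leq u_2$. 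Combining these identifications shows that the map of the statement is an order isomorphism. I expect the main obstacle to be the third step of the middle paragraph: one must check that a representative $\dot u$ can genuinely be chosen inside $C_i$ fixing $e_S$ and that $[\dot u, e_S]$ lands in the Schubert cell indexed by $u$, so that the purely Weyl-combinatorial assignment $u \mapsto \bigl((wu)^P, (wu)_P(S)\bigr)$ is compatible with the geometric Schubert stratification of the fiber.
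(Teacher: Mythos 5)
Your proposal is correct and follows essentially the same route as the paper: both identify $\phi^{-1}(Be_R)$ via Proposition \ref{prop:fibre-bandiere} and Proposition \ref{prop:centralizzatore-ridotto} with the Schubert stratification of a flag variety for $w.C_i$, obtain the chain of poset isomorphisms $\phi^{-1}(R) \simeq B\backslash\phi^{-1}(Be_R) \simeq W_{C_i}^{C_i\cap P} \simeq W_{L_i^*}^{L_i^*\cap P}$, and then match the composite with the combinatorial assignment $u \mapsto \bigl((wu)^P,(wu)_P(S)\bigr)$. Your verification that $[wu,e_S]=[(wu)^P,e_{(wu)_P(S)}]$ and that representatives $\dot u$ may be taken in $C_i$ fixing $e_S$ just makes explicit the step the paper summarizes by saying the induced map to $W^P$ is $u\mapsto (wu)^P$.
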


\begin{proof}
By Proposition \ref{prop:fibre-bandiere} the fiber $\phi^{-1}(e_R)$ is a flag variety under $w.C_i$, and $B \cap w.C_i = w.(B \cap C_i)$ is a Borel subgroup of $w.C_i $. Therefore we get isomorphisms of partially ordered sets
$$
	\phi^{-1}(R) \; \simeq \; B \, \backslash \, \phi^{-1}(Be_R)  \; \simeq \; w.(B \cap C_i) \, \backslash \, w.C_i \, / \, w.(C_i \cap P) \; \simeq \; W_{C_i}^{C_i \cap P} 	\; \simeq \; W_{L_i^*}^{L_i^* \cap P} 
$$

The claim follows by noticing that, if we compose the previous isomorphisms with the natural injection $\phi^{-1}(R) \rightarrow W^P$ (that is, the projection on the first factor), then the induced map $W_{L_i^*}^{L_i^* \cap P} \rightarrow W^P$ is nothing but $u \mapsto (wu)^P$.
\end{proof}

\section{The Bruhat order on $\wt X$}

We keep the notation of the previous section. In particular $e \in \calN_2$ is a fixed nilpotent element contained in an $\mathrm{sl}_2$-triple $\{e,h,f\}$, where $h$ is the dominant characteristic for $Ge$, and $X = \overline{Ge}$. Moreover $\goa = \gog(2)$ is the associated abelian ideal of $\gob$ whose set of roots is denoted by $\Psi$, and $P$ is the standard parabolic subgroup with Lie algebra $\gop = \bigoplus_{i \geq 0} \gog(i)$ and Levi subgroup $L$ with Lie algebra $\gol = \gog(0)$. If moreover $r = \rk(\goa)$, then we can assume that $\{e,h,f\} = \{e_r, h_r, y_r\}$ is the $\mathfrak{sl}_2$-triple defined in \eqref{eq:sl2-tripla canonica}.

We now characterize the partial order on the $B$-orbits in the resolution $\wt X = G \times^P \goa$ in terms of the corresponding pairs in $W^P \times \Ort(\Psi)$. A first connection follows immediately by projecting orbits in $\wt X$ respectively to $G/P$ and to $X$.

\begin{proposition} \label{prop:implicazione-facile-sopra}
Let $v,w \in W^P$ and $R,S \in \Ort(\Psi)$. If $Bv\tilde e_R \subset \ol{Bw \tilde e_S}$, then $v \leq w$ and $\grs_{v(\wh R)} \leq \grs_{w(\wh S)}$.
\end{proposition}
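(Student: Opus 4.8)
The plan is to push the given closure inclusion through the two natural $B$-equivariant projections of $\wt X$. Recall that $\wt X$ sits inside $G/P \times \calN$ via $[g,z] \mapsto (gP, gz)$, so that we have the projection $\pi : \wt X \to G/P$ and the resolution $\phi : \wt X \to X \subset \calN$, both of which are $G$-equivariant, hence $B$-equivariant. Since a continuous equivariant map sends the closure of a $B$-orbit into the closure of the image $B$-orbit, from $Bv\tilde e_R \subset \overline{Bw\tilde e_S}$ I would obtain at once the two inclusions $\pi(Bv\tilde e_R) \subset \overline{\pi(Bw\tilde e_S)}$ and $\phi(Bv\tilde e_R) \subset \overline{\phi(Bw\tilde e_S)}$. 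The two desired inequalities will come respectively from these.

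First I would treat the projection to $G/P$. Here $\pi(Bw\tilde e_S) = BwP/P$ is the Schubert cell attached to $w \in W^P$ (and does not depend on $S$), whose closure is the associated Schubert variety $\bigsqcup_{v' \leq w,\, v' \in W^P} Bv'P/P$. Therefore the inclusion $BvP/P \subset \overline{BwP/P}$ forces $v \leq w$ in the Bruhat order of $W$, which is the first assertion.

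Next I would treat the resolution $\phi$. By the analysis recalled in Proposition \ref{prop:B-orbits-in-X} we have $\phi(Bw\tilde e_S) = Bwe_S = Be_{w(S)}$ and likewise $\phi(Bv\tilde e_R) = Be_{v(R)}$, where $v(R), w(S) \in \Ort(X)$ are strongly orthogonal (every element of $\Ort(X)$ is, since $\goa$ is abelian and $W$ preserves strong orthogonality). The pushed inclusion thus reads $Be_{v(R)} \subset \overline{Be_{w(S)}}$, and Proposition \ref{prop:bruhat-involuzioni} yields $\grs_{\widehat{v(R)}} \leq \grs_{\widehat{w(S)}}$. Finally, since $v, w \in W$ fix the imaginary root $\grd$ when regarded inside $\wh W$, one has $v(\wh R) = \{v(\gra) - \grd \mid \gra \in R\} = \widehat{v(R)}$ and similarly for $w$; hence the inequality is exactly $\grs_{v(\wh R)} \leq \grs_{w(\wh S)}$, as wanted.

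The whole argument is a formal consequence of the functoriality of orbit-closure inclusions under $\pi$ and $\phi$, so I do not anticipate any real difficulty. The only steps requiring mild care are the identifications $\phi(Bw\tilde e_S) = Be_{w(S)}$ and $v(\wh R) = \widehat{v(R)}$, together with the invocation of Proposition \ref{prop:bruhat-involuzioni}, which is where all the nontrivial geometry relating a strongly orthogonal $B$-orbit to its affine involution has already been absorbed.
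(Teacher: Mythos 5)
Your proposal is correct and follows exactly the paper's argument: project to $G/P$ to read off $v \leq w$ from the Schubert cell decomposition, and project via $\phi$ to $X$ to reduce the second inequality to Proposition \ref{prop:bruhat-involuzioni} applied to the strongly orthogonal sets $v(R)$ and $w(S)$. The extra details you supply (the identifications $\phi(Bw\tilde e_S) = Be_{w(S)}$ and $v(\wh R) = \widehat{v(R)}$) are precisely the routine verifications the paper leaves implicit.
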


\begin{proof}
The inequality $v \leq w$ follows applying the projection $\wt X \rightarrow G/P$, whereas $\grs_{v(\wh R)} \leq \grs_{w(\wh S)}$ follows from Proposition \ref{prop:bruhat-involuzioni} applying the projection $\wt X \rightarrow X$.
\end{proof}

As shown in \cite{GMMP}, the dimension of a $B$-orbit in $\goa$ is read off from the length of the corresponding involution. As a consequence, we also get a formula for the dimension of the $B$-orbit in $\wt X$.

\begin{definition}
Let $w \in W^P$ and $S \in \Ort(\Psi)$, the \textit{length} of $(w,S)$ is
$$
	L(w,S) = \ell(w) + L(\grs_{\wh S}).
$$
\end{definition}
Sometimes it will also be convenient to denote the involution $\grs_{w(\wh S)}$ by $\grs(w,S)$.

As $\dim (Bw \tilde e_S) = \ell(w) + \dim(B e_S)$, the dimension formula \cite[Corollary 5.4]{GMMP} implies that
\begin{equation}\label{dimorbitasopra}
	\dim (Bw \tilde e_S) =  \ell(w) + L(\grs_{\wh S}) = L(w,S).
\end{equation}

In the case of an admissible pair, we will see that $L(w,S) = L(\grs_{w(\wh S)})$ is the dimension of $Bwe_S \subset X$.

\begin{definition}
Let $w \in W^P$ and $S \in \mathrm{Ort}(\Psi)$. We say that $\gra \in \grD$ is a \textit{descent} for $(w,S)$ if either it is a descent for $\grs_{w(\wh S)}$, or if $s_\gra w < w$. In the latter case we say that $\gra$ is an \textit{external descent}.  If instead $\gra \in \grD$ is a descent for $(w,S)$ and $s_\gra w > w$ then we say that $\gra$ is an \textit{internal descent}.
\end{definition}

Given $\gra\in\Delta$, denote by $P_\gra$ the minimal parabolic subgroup of $G$ associated to $\gra$.

\begin{lemma}	\label{lemma:discese}
Let $(w,S) \in W^P \times \mathrm{Ort}(\Psi)$ and let $\gra \in \grD$ be a descent for $(w,S)$, then $\overline{Bw \tilde e_S}$ is $P_\gra$-stable. If moreover $\gra$ is internal and complex (resp. real), then $w^{-1}(\gra) \in \grD_L$ and it is a complex (resp. real) descent for $\grs_{\wh S}$.
\end{lemma}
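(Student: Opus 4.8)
The plan is to analyze the $P_\gra$-action through the two projections $\pi\colon\wt X\to G/P$ and $\phi\colon\wt X\to X$, using throughout the conjugation identity
$$
	\grs_{w(\wh S)}=w\,\grs_{\wh S}\,w^{-1},
$$
which holds because $w\in W\subset\wh W$ fixes $\grd$ and conjugates each reflection $s_{\grb-\grd}$ into $s_{w(\grb)-\grd}$. Writing $\mu=w^{-1}(\gra)$ this gives $\grs_{w(\wh S)}(\gra)=w\bigl(\grs_{\wh S}(\mu)\bigr)$, which is the main computational tool and cleanly separates the two flavours of descent: $\gra$ is external exactly when $\mu<0$ (i.e. $s_\gra w<w$), and internal exactly when $\mu>0$.

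For the external case I would argue by dimension. If $s_\gra w<w$, set $w'=s_\gra w$; one checks $w'\in W^P$ (if some $\grb\in\Phi_L^+$ had $w(\grb)=\gra$ then $w^{-1}(\gra)=\grb>0$, contradicting $s_\gra w<w$, so $s_\gra$ keeps $w(\Phi_L^+)$ positive) and $\ell(w')=\ell(w)-1$. Since $B\subset P_\gra$ we have $Bw\tilde e_S=Bs_\gra w'\tilde e_S\subset P_\gra Bw'\tilde e_S$, and by the dimension formula \eqref{dimorbitasopra},
$$
	\dim Bw\tilde e_S=\ell(w)+L(\grs_{\wh S})=\dim Bw'\tilde e_S+1\ \geq\ \dim P_\gra Bw'\tilde e_S .
$$
As $P_\gra Bw'\tilde e_S$ is irreducible and contains the equidimensional subset $Bw\tilde e_S$, the latter is dense in it, so $\ol{Bw\tilde e_S}=\ol{P_\gra Bw'\tilde e_S}$; being the closure of a $P_\gra$-stable set it is $P_\gra$-stable.

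For the internal case ($\mu>0$, $\grs_{w(\wh S)}(\gra)<0$) I would first pin down the $\grd$-coefficient. Since $\grs_{\wh S}(\mu)=\grs_S(\mu)+\mu(h_S)\grd$ and $w$ fixes $\grd$, negativity of $w\bigl(\grs_{\wh S}(\mu)\bigr)$ forces $\mu(h_S)\leq0$; when $\mu(h_S)<0$ one gets $\grs_{\wh S}(\mu)<0$ at once, while when $\mu(h_S)=0$ one must still control the sign of the finite part $\grs_S(\mu)$. The crucial claim is that an internal descent forces $\mu\in\grD_L$ and $\grs_{\wh S}(\mu)<0$. Granting this, the type transfers automatically through the identity above: $\grs_{w(\wh S)}(\gra)=-\gra$ is equivalent to $\grs_{\wh S}(\mu)=-\mu$, so $\gra$ is a real (resp. complex) descent for $(w,S)$ precisely when $\mu$ is a real (resp. complex) descent for $\grs_{\wh S}$. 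Moreover $\mu\in\grD_L$ makes $\pi(\ol{Bw\tilde e_S})=\ol{BwP/P}$ be $P_\gra$-stable (since $w^{-1}(\gra)\in\Phi_L$ gives $s_\gra w\in wW_L$), so the $P_\gra$-action does not escape in the base and the problem descends, over the open cell, to the action of the minimal parabolic $P^L_\mu\subset L$ on $\ol{B_L e_S}\subset\goa$; the abelian-ideal results of \cite{GMMP} then yield $P_\gra$-stability of $\ol{Bw\tilde e_S}$.

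I expect the claim $\mu\in\grD_L$ (together with $\grs_{\wh S}(\mu)<0$) to be the main obstacle. In the real case it is clean: from $\grs_{\wh S}(\mu)=-\mu$ and $\height(e_S)=2\leq3$, Lemma \ref{lemma:radici-attive-reali} forces $\mu=\tfrac12(\grb-\grb')$ with $\grb,\grb'\in S\subset\Psi$; hence $\mu(h)=\tfrac12\bigl(\grb(h)-\grb'(h)\bigr)=0$, so $\mu\in\Phi_L^+$, and since $w\in W^P$ sends $\Phi_L^+$ into $\Phi^+$ while $\gra=w(\mu)$ is simple, $\mu$ must be indecomposable in $\Phi_L^+$, i.e. $\mu\in\grD_L$. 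The complex case is more delicate, since one can no longer read $\mu(h)=0$ off a half-sum of two roots of $S$; here I would instead first establish $P_\gra$-stability of $\ol{Bw\tilde e_S}$ by a Richardson--Springer type rank-one analysis, projecting by $\phi$ and using the fibre description of Corollary \ref{cor:bruhat-fibre}, and then \emph{deduce} $\mu\in\grD_L$: applying $\pi$ to the ($P_\gra$-stable) closure forces $\ol{BwP/P}$ to be $P_\gra$-stable, which for $w\in W^P$ and $s_\gra w>w$ is possible only if $w^{-1}(\gra)\in\Phi_L^+$, and simplicity of $\gra$ again upgrades this to $\mu\in\grD_L$. Controlling the sign of $\grs_S(\mu)$ in the borderline subcase $\mu(h_S)=0$, and making the rank-one reduction to $\goa$ precise, are the steps I expect to require the most care.
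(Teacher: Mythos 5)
Your treatment of the external case is correct (the dimension count via \eqref{dimorbitasopra} replaces the paper's appeal to \cite[Lemma 6]{brion}, and your check that $s_\gra w\in W^P$ is the right one), and your handling of the \emph{real} internal subcase via Lemma \ref{lemma:radici-attive-reali} is a valid alternative route to $\mu=w^{-1}(\gra)\in\grD_L$. But the complex internal subcase, which you yourself flag as the main obstacle, is a genuine gap: you neither prove that $\mu\in\grD_L$ there, nor that $\mu$ is a descent for $\grs_{\wh S}$, nor the $P_\gra$-stability, and the plan you sketch is circular in structure --- you propose to \emph{first} establish $P_\gra$-stability of $\ol{Bw\tilde e_S}$ by an unspecified ``rank-one analysis'' and only \emph{then} deduce $\mu\in\grD_L$ from it, whereas every concrete mechanism for proving that stability (including the reduction to the abelian-ideal results of \cite{GMMP} that you invoke) already requires knowing that $\mu\in\grD_L$, since otherwise the $P_\gra$-action moves the base point in $G/P$.

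The missing idea is a direct use of the height-$2$ hypothesis, which handles real and complex at once. If $\mu=w^{-1}(\gra)\in\Phi^+\setminus\Phi_L$, then for every $\grg\in\Psi$ the sum $\mu+\grg$ cannot be a root (its $h$-eigenvalue would exceed $2$), so $\langle\mu,\grg^\vee\rangle\geq 0$; plugging this into
$$
\grs_{w(\wh S)}(\gra)=\gra-\sum_{\grg\in S}\langle\mu,\grg^\vee\rangle\,w(\grg-\grd)
$$
and noting $w(\grg-\grd)\in\wh\Phi^-$ shows $\grs_{w(\wh S)}(\gra)>0$, contradicting that $\gra$ is a descent. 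Hence $\mu\in\Phi_L^+$, and simplicity of $\gra=w(\mu)$ with $w\in W^P$ gives $\mu\in\grD_L$ exactly as you argue in the real case. Your ``borderline subcase $\mu(h_S)=0$'' also needs an actual argument: one must rule out $\grs_{\wh S}(\mu)\in\Phi^+(w)$, which the paper does by computing that $\sum_{\grg\in S}\langle\mu,\grg^\vee\rangle=0$ forces $\grs_{\wh S}(\mu)(h)=0$, i.e.\ $\grs_{\wh S}(\mu)\in\Phi_L^+$, incompatible with $w(\Phi_L^+)\subset\Phi^+$. Only after these two points is the reduction $Bs_\gra Bw\tilde e_S\subset BwP_\mu\tilde e_S\subset\ol{Bw B_L\tilde e_S}$ and the appeal to \cite[Proposition 6.1]{GMMP} available.
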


\begin{proof}
If $\gra$ is an external descent, the claim follows from \cite[Lemma 6]{brion}: indeed in this case $P_\gra w \tilde e_S = B s_\gra w \tilde e_S \cup B w \tilde e_S$, and $B w \tilde e_S$ is open inside $P_\gra w \tilde e_S$ because $s_\gra w < w$.

Suppose now that $\gra$ is an internal descent. Then $s_\gra w > w$. Set  $\grb = w^{-1}(\gra)$ and note that, by assumption, $\grb \in \Phi^+$.
Suppose first that $\grb \in \Phi^+ \setminus \Phi_L$. Then for all $\grg \in \Psi$ we have $\grb + \grg \not \in \Phi$, hence $\langle \grb, \grg^\vee \rangle \geq 0$. Therefore
$$
	\grs_{w(\wh S)}(\gra) = \gra - \sum_{\grg \in S} \langle \grb, \grg^\vee \rangle \, w(\grg - \grd)
$$
is a positive root, a contradiction.

Therefore it must be $\grb \in \Phi_L$. As $w\in W^P$, notice that $\grb \in \grD_L$: if indeed $\grb=\grb_1+\grb_2$ with $\grb_1,\grb_2 \in \Phi_L^+$, then $\gra=w(\grb_1)+w(\grb_2)$ would be a sum of positive roots. We claim that $\grb$ is a descent for $\grs_{\wh S}$ as well. Otherwise it must be $\grs_{\wh S}(\grb) \in \Phi^+(w)$, hence from  the equality
$$\grs_{\wh S}(\grb) = \grb - \sum_{\grg \in S} \langle \grb, \grg^\vee \rangle \, (\grg - \grd)
$$
we get $\sum_{\grg \in S} \langle \grb, \grg^\vee \rangle = 0$. Thus there are $\grg_1, \ldots, \grg_{2n} \in S$ such that 
$$\grs_{\wh S}(\grb) = \grb + \sum_{i=1}^n (\grg_i - \grg_{i+1})$$
yielding $(\grs_{\wh S}(\grb))(h) = 0$. This shows that $\grs_{\wh S}(\grb) \in \Phi^+_L$, thus
$$\grs_{w(\wh S)}(\gra) = w(\grs_{\wh S}(\grb) ) \in w(\Phi^+_L) \subset \Phi^+,$$
a contradiction.

Thus we have shown that $\grb$ is a descent for $\grs_{\wh S}$. Hence by \cite[Proposition 6.1]{GMMP} we see that $\overline{Be_S} = \overline{B_L e_S}$ is $P_\grb$-stable. Thus
$$B s_\gra B w \tilde e_S = B s_\gra U_\gra w \tilde e_S \subset B w P_\grb \tilde e_S \subset \overline {B w B_L \tilde e_S} = \overline {B w \tilde e_S},$$
and it follows that $P_\gra w \tilde e_S \subset \overline{Bw \tilde e_S}$.

It only remains to prove that, if $\gra$ is an internal descent for $\grs_{w(\wh S)}$, then it is complex (resp. real) if and only if $\grb$ it is so as a descent for $\grs_{\wh S}$, but this is immediate. 
\end{proof}

In the particular case of an internal descent, combining together Lemma \ref{lemma:discese} and \cite[Proposition 6.1]{GMMP} we get the following.

\begin{proposition}	\label{prop:discese interne}
Let $(w,S) \in W^P \times \mathrm{Ort}(\Psi)$, let $\gra \in \grD$ be an internal descent and set $\grb = w^{-1}(\gra) \in \grD_L$. Then the following hold:
\begin{itemize}
	\item[i)]	If $\gra$ is complex, then $w \tilde e_{s_\grb(S)} \in P_\gra w \tilde e_S$ and $s_\gra \circ \grs_{w(\wh S)} = \grs_{w s_\grb (\wh S)}$.
	\item[ii)] If $\gra$ is real, then there are unique elements $\grg_1,\grg_2 \in S$ such that $\grb = \tfrac{1}{2}(\grg_1 - \grg_2)$. Moreover, the set of roots
	\begin{equation}\label{sb}S_\grb := (S \setminus \{\grg_1, \grg_2\}) \cup \{\grb+\grg_2\}\end{equation}
	is an orthogonal subset of $\Psi$ such that $w \tilde e_{S_\grb} \in P_\gra w \tilde e_S$ and $s_\gra \circ \grs_{w(\wh S)} = \grs_{w(\wh S_\grb)}$.
\end{itemize}
\end{proposition}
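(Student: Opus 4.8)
The plan is to reduce everything to the corresponding statement for the abelian ideal $\goa$ proved in \cite[Proposition 6.1]{GMMP}, and then to transport it from the Levi level up to $\wt X$ by conjugation by $w$. By Lemma~\ref{lemma:discese}, since $\gra$ is an internal descent for $(w,S)$ the root $\grb = w^{-1}(\gra)$ lies in $\grD_L$ and is a descent for $\grs_{\wh S}$, which is complex (resp.\ real) exactly when $\gra$ is. First I would invoke \cite[Proposition 6.1]{GMMP} applied to $\grb$ acting on $e_S \in \goa$ (using Remark~\ref{oss:symmetric} to place ourselves in the Hermitian symmetric situation where that result applies). In the complex case this yields $e_{s_\grb(S)} \in P_\grb e_S$ together with $s_\grb \circ \grs_{\wh S} = \grs_{s_\grb(\wh S)}$; here $s_\grb(S) \in \Ort(\Psi)$ because $s_\grb$ fixes $h$ (as $\grb(h)=0$) and hence preserves $\Psi = \Phi(2,h)$. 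In the real case it yields the existence and uniqueness of $\grg_1,\grg_2 \in S$ with $\grb = \tfrac12(\grg_1-\grg_2)$, the orthogonality of $S_\grb$ inside $\Psi$, the membership $e_{S_\grb} \in P_\grb e_S$, and the identity $s_\grb \circ \grs_{\wh S} = \grs_{\wh S_\grb}$.

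Next I would lift these orbit memberships to $\wt X$. Since the action of $B$ on $\goa$ coincides with that of $B_L$ and factors through $L$, the membership from \cite{GMMP} may be realized inside the minimal parabolic $P_\grb \cap L$ of $L$, whose Bruhat decomposition is $B_L \sqcup B_L s_\grb U_\grb$; moreover $P_\grb \cap L \subset P$ and $P^u$ acts trivially on $\goa$, so $\tilde e_{s_\grb(S)}$ (resp.\ $\tilde e_{S_\grb}$) lies in $(P_\grb \cap L)\,\tilde e_S$. It then remains to check the transport inclusion $w\,(P_\grb\cap L)\,\tilde e_S \subset P_\gra w \tilde e_S$. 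For this I would use three consequences of $w \in W^P$ and $\grb = w^{-1}(\gra)$: namely $wB_L \subset Bw$ (as $w(\Phi_L^+)\subset \Phi^+$), $w s_\grb = s_\gra w$, and $wU_\grb w^{-1} = U_\gra$. Writing a point of the nontrivial cell as $b s_\grb u \tilde e_S$ with $b \in B_L$, $u \in U_\grb$, one pushes $w$ across: $wb = b'w$ with $b' \in B$, then $ws_\grb = s_\gra w$, and finally $wu = u'w$ with $u' \in U_\gra$, whence $w\,b s_\grb u \tilde e_S = b' s_\gra u'\, w\tilde e_S \in P_\gra w\tilde e_S$; the cell $B_L\tilde e_S$ transports into $Bw\tilde e_S$ directly. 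This gives $w \tilde e_{s_\grb(S)} \in P_\gra w\tilde e_S$ in case (i) and $w \tilde e_{S_\grb} \in P_\gra w\tilde e_S$ in case (ii).

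Finally the two involution identities follow by conjugation. From $\grs_{w(\wh S)} = w\grs_{\wh S}w^{-1}$ (as $w$ fixes $\grd$ and conjugates reflections to reflections) and $s_\gra = w s_\grb w^{-1}$, in the complex case the two-sided rule gives
$$ s_\gra \circ \grs_{w(\wh S)} = s_\gra \grs_{w(\wh S)} s_\gra = w\,(s_\grb \grs_{\wh S} s_\grb)\,w^{-1} = w\,(s_\grb \circ \grs_{\wh S})\,w^{-1} = \grs_{w s_\grb(\wh S)}, $$
where $s_\grb(\wh S) = \wh{s_\grb(S)}$ since $s_\grb$ fixes $\grd$; while in the real case, where $s_\gra$ and $\grs_{w(\wh S)}$ commute, the one-sided rule gives $s_\gra \circ \grs_{w(\wh S)} = w\,(s_\grb\grs_{\wh S})\,w^{-1} = \grs_{w(\wh S_\grb)}$. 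The complex/real dichotomy is matched across the two levels precisely by the last assertion of Lemma~\ref{lemma:discese}.

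The main obstacle I expect is the bookkeeping in the transport step: because $w$ does not normalize $B$, one must carefully exploit $w \in W^P$ to move $B_L$ and $U_\grb$ across $w$ in the correct order, and track that the resulting factors indeed land in $P_\gra$. Everything else is either a direct application of \cite{GMMP} or a formal conjugation computation.
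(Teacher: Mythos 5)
Your proposal is correct and takes essentially the same route as the paper, which offers no written proof but simply asserts that the proposition follows ``combining together Lemma \ref{lemma:discese} and [GMMP, Proposition 6.1]''; your argument supplies precisely the details of that combination (reduction to the descent $\grb=w^{-1}(\gra)\in\grD_L$ for $\grs_{\wh S}$ via Lemma \ref{lemma:discese}, application of the abelian-ideal result, and the conjugation-by-$w$ transport using $wB_Lw^{-1}\subset B$, $ws_\grb w^{-1}=s_\gra$, $wU_\grb w^{-1}=U_\gra$). All steps check out.
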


We now consider the external descents of $(w,S)$.

\begin{proposition}	\label{prop:discese esterne}
Let $(w,S) \in W^P \times \mathrm{Ort}(\Psi)$, and let $\gra \in \grD$ be such that $s_\gra w < w$.
\begin{itemize}
	\item[i)] Either $\gra$ is a complex descent for $\grs_{w(\wh S)}$, or $w(S) \cup \{\gra\}$ is orthogonal.
	\item[ii)] If $(w,S)$ is admissible, then $\gra$ is a descent for $\grs_{w(\wh S)}$.
\end{itemize} 
\end{proposition}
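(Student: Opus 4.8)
The plan is to read the sign of $\grs_{w(\wh S)}(\gra)$ off a single affine-root computation and then control that sign using the height-$2$ hypothesis together with the defining property of $W^P$. Write $\grb = w^{-1}(\gra)$; since $s_\gra w < w$ we have $\grb \in \Phi^-$, and putting $\grb' = -\grb \in \Phi^+$ the relation $w(\grb') = -\gra$ exhibits $\grb'$ as an inversion of $w$. Exactly as in the proof of Lemma~\ref{lemma:discese}, expanding the affine reflections gives
$$
\grs_{w(\wh S)}(\gra) = \grs_{w(S)}(\gra) + n\,\grd, \qquad n = \grb(h_S) = \sum_{\grg \in S}\langle \grb, \grg^\vee\rangle = -\sum_{\grg \in S}\langle \grb', \grg^\vee\rangle .
$$
Recalling that a real root $\mu + m\grd$ is negative precisely when $m < 0$, or $m = 0$ and $\mu < 0$, whether $\gra$ is a descent and whether it is real or complex is governed entirely by the integer $n$.

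For part~(i) the key step is to prove $n \leq 0$, and here both hypotheses enter. Because $w \in W^P$, no inversion of $w$ lies in $\Phi_L^+$, so $\grb' \in \Phi^+ \setminus \Phi_L$ and hence $\grb'(h) \geq 1$. For each $\grg \in S \subset \Psi$ one has $\grg(h) = 2$, so $(\grb' + \grg)(h) \geq 3$; since $\height(e) = 2$ no root can take the value $3$ on $h$, whence $\grb' + \grg \notin \Phi$ and therefore $\langle \grb', \grg^\vee\rangle \geq 0$. Summing yields $n \leq 0$, and the dichotomy follows at once: if $n < 0$ then $\grs_{w(\wh S)}(\gra)$ is negative with nonzero $\grd$-coefficient, so $\gra$ is a complex descent; if $n = 0$ then every summand $\langle \grb', \grg^\vee\rangle$ must vanish, so $\gra \perp w(\grg)$ for all $\grg \in S$ and $w(S) \cup \{\gra\}$ is orthogonal.

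For part~(ii) I would translate admissibility, via Corollary~\ref{cor:ammissibile}(iii), into the statement that $w$ has minimal length in $\{u \in W^P \mid u^{-1}(R) \subset \Psi\}$, where $R = w(S)$. By part~(i) it suffices to exclude the orthogonal alternative, so suppose $w(S) \cup \{\gra\}$ is orthogonal. Then $s_\gra$ fixes $w(S)$ pointwise, so $(s_\gra w)^{-1}(R) = w^{-1} s_\gra(w(S)) = S \subset \Psi$. Moreover $s_\gra w \in W^P$: its only possible failure would require $w^{-1}(\gra) = \grb \in \grD_L$, which is impossible since $\grb \in \Phi^-$ while $\grD_L \subset \Phi^+$. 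As $\ell(s_\gra w) < \ell(w)$, this contradicts the minimality of $w$; hence the orthogonal case cannot occur and $\gra$ is a descent (indeed a complex one).

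I expect the inequality $n \leq 0$ in part~(i) to be the crux. It is precisely the point where the height-$2$ restriction is indispensable — it is what forbids the root $\grb' + \grg$ — and where the membership $w \in W^P$ is needed to guarantee $\grb' \notin \Phi_L$. Once $n \leq 0$ is established, both the complex/orthogonal trichotomy and the admissibility argument are formal, the latter reducing to producing the shorter competitor $s_\gra w \in W^P$.
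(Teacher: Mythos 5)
Your argument is correct. Part (i) is essentially the paper's own proof: both hinge on showing $\langle \grb', \grg^\vee \rangle \geq 0$ for all $\grg \in \Psi$, where $\grb' = -w^{-1}(\gra)$ is an inversion of $w$, using $w \in W^P$ to force $\grb' \in \Phi^+ \setminus \Phi_L$ and the height-$2$ bound to forbid $\grb' + \grg \in \Phi$; reading off the $\grd$-coefficient then gives the same dichotomy (strictly negative coefficient forces a complex descent, vanishing coefficient forces orthogonality term by term). Part (ii), however, takes a genuinely different route. The paper first settles the case where $S$ is a maximal orthogonal subset of $\Psi$ (using Corollary \ref{cor:max-orthogonal}, i.e.\ that such an $S$ is also maximal orthogonal in $\Phi^+ \setminus \Phi_L$, so the orthogonal alternative of (i) is impossible), and then reduces the general admissible pair to that case through the fiber description of Corollary \ref{cor:bruhat-fibre}, replacing $(w,S)$ by the admissible pair $(v,R)$ for the orbit $Ge_i$ and observing that left descents of $w = v^P$ are left descents of $v$. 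You instead exploit the minimal-length characterization of admissibility from Corollary \ref{cor:ammissibile}: if the orthogonal alternative held, then $s_\gra w$ would lie in $W^P$ (its inversion set cannot meet $\grD_L$ since $w^{-1}(\gra) \in \Phi^-$), would be strictly shorter, and would still carry $R = w(S)$ into $\Psi$, contradicting minimality. Your version is more economical and avoids both Corollary \ref{cor:max-orthogonal} and the reduction across $G$-orbits, at the price of making less visible the role of the maximal case; the paper's version records along the way the fact that for maximal $S$ every external descent is automatically a descent of the involution, which is reused implicitly elsewhere. Both proofs are complete and rely only on results established before the proposition.
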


\begin{proof}
i) Denote $\grb = - w^{-1}(\gra)$, then $\grb \in \Phi^+ \setminus \Phi_L$.  Since $\height(e) = 2$ and $\grb(h_r) > 0$, we have $\langle \grb, \grg^\vee \rangle \geq 0$ for all $\grg \in \Psi$: if indeed $\langle \grb, \grg^\vee \rangle < 0$ for some $\grg \in \Psi$, then $\grb+\grg$ would be a root and $(\grb + \grg)(h_r)>2$.
Write
$$
	\grs_{w(\wh S)}(\gra) = \gra + \sum_{\grg \in S} \langle \grb, \grg^\vee \rangle \, w(\grg - \grd).
$$
Since $\gra \in \grD$ and $w(\grg-\grd) \in \wh \Phi^-$ for all $\grg \in S$, it follows that $\gra$ is a descent for $\grs_{w(\wh S)}$ if and only if $\langle \grb, \grg^\vee \rangle \neq 0$ for some $\grg \in S$. In particular, if $\gra$ is not a descent for $\grs_{w(\wh S)}$, then $w(S) \cup \{\gra\}$ is an orthogonal set of roots. On the other hand, if $\gra$ is a descent for $\grs_{w(\wh S)}$, then
$$
	s_\gra \grs_{w(\wh S)} (\gra) = - \gra + \sum_{\grg \in S} \langle \grb, \grg^\vee \rangle \, s_\gra w(\grg - \grd)
$$
is still in $\wh \Phi^-$: thus in this case $\gra$ is necessarily a complex descent.

ii) By Corollary \ref{cor:max-orthogonal}, every maximal orthogonal subset of $\Psi$ is also maximal in $\Phi^+ \setminus \Phi_L$. It follows that, if $S$ is maximal, then $\gra$ is necessarily a descent for $\grs_{w(\wh S)}$. Suppose now that $S$ is not maximal, and let $i \leq r$ be such that $e_S \in G e_i$. Let $(v,R) \in W^{P_i} \times \Ort(\Psi_i)$ be the unique admissible pair (with respect to the orbit $Ge_i$) such that $w(S) = v(R)$. Then by Corollary \ref{cor:bruhat-fibre} we see that $w =v^P$: in particular, every left descent of $w$ is a left descent of $v$ as well. Therefore, from the case of a maximal orthogonal subset considered above, we deduce that $\gra$ is a descent for $\grs_{v(\wh R)} = \grs_{w(\wh S)}$.
\end{proof}

\begin{theorem}	\label{teo:dimension}
Let $(w,S) \in W^P \times \mathrm{Ort}(\Psi)$ be admissible, then
$$\dim (Bw \tilde e_S) = \dim (Bwe_S) = L(\grs_{w(\wh S)}).$$ 
\end{theorem}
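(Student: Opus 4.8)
The plan is to establish the two equalities in turn. For the first, $\dim(Bw\tilde e_S) = \dim(Bwe_S)$, I would show that admissibility forces the restriction of $\phi$ to $Bw\tilde e_S$ to be bijective onto $Bwe_S$. Indeed, by Corollary \ref{cor:ammissibile} the orbit $Bw\tilde e_S$ is the \emph{unique} closed $B$-orbit in the fiber $\phi^{-1}(Bwe_S)$, and by Proposition \ref{prop:fibre-bandiere} together with the order isomorphism of Corollary \ref{cor:bruhat-fibre} this closed orbit meets each fiber $\phi^{-1}(e_R)$ --- a flag variety for $w.C_i$ with Borel subgroup $B \cap w.C_i$ --- exactly in its closed Borel orbit, that is, in a single point. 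Hence $\phi|_{Bw\tilde e_S} : Bw\tilde e_S \to Bwe_S$ is a bijective morphism of irreducible varieties, and since we work in characteristic zero the generic fiber is reduced of dimension $0$, so the two orbits have the same dimension.

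For the second equality I would combine \eqref{dimorbitasopra} with the definition of $L(\grs_{w(\wh S)})$ (recalling $\rk(\mathrm{id}-\grs_{w(\wh S)}) = |S|$), which reduces the claim to the length identity $\ell(\grs_{w(\wh S)}) = 2\ell(w) + \ell(\grs_{\wh S})$, equivalently $L(\grs_{w(\wh S)}) = \ell(w) + L(\grs_{\wh S})$. I would prove this by induction on $\ell(w)$. The base case $\ell(w)=0$ is precisely the dimension formula for $B$-orbits in $\goa$ from \cite{GMMP}. For the inductive step, choose $\gra \in \grD$ with $s_\gra w < w$; then $w^{-1}(\gra) < 0$ is not in $\grD_L$, so $w' := s_\gra w \in W^P$ with $\ell(w') = \ell(w)-1$. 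Since $(w,S)$ is admissible, Proposition \ref{prop:discese esterne}(ii) shows $\gra$ is a descent for $\grs_{w(\wh S)}$, and then part (i) shows it is complex. As $\grs_{w'(\wh S)} = s_\gra\,\grs_{w(\wh S)}\,s_\gra = s_\gra \circ \grs_{w(\wh S)}$, Lemma \ref{lemma:discese-CR} gives $L(\grs_{w'(\wh S)}) = L(\grs_{w(\wh S)}) - 1$. Granting that $(w',S)$ is again admissible, the induction hypothesis and the first equality yield $\dim(Bw'e_S) = L(\grs_{w'(\wh S)})$, and together with $\dim(Bw\tilde e_S) = \dim(Bw'\tilde e_S) + 1$ from \eqref{dimorbitasopra} this gives $\dim(Bwe_S) = \dim(Bw\tilde e_S) = L(\grs_{w(\wh S)})$, closing the induction.

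The step requiring genuine work is the stability of admissibility, namely that $(w',S) = (s_\gra w, S)$ is admissible. I would verify this via the minimal-length characterization in Corollary \ref{cor:ammissibile}(iii). Writing $R = w(S)$ and $R' = s_\gra(R)$, admissibility of $(w,S)$ means $w$ has minimal length in $\{u \in W^P : u^{-1}(R) \subset \Psi\}$, and I must show $s_\gra w$ is minimal in the corresponding set for $R'$. If some $u \in W^P$ satisfied $u^{-1}(R') \subset \Psi$ with $\ell(u) < \ell(w)-1$, I would split into two cases. If $s_\gra u \in W^P$, then $(s_\gra u)^{-1}(R) = u^{-1}(R') \subset \Psi$ and $\ell(s_\gra u) \le \ell(u)+1 < \ell(w)$, contradicting minimality of $w$. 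If instead $s_\gra u \notin W^P$, then $u^{-1}(\gra) \in \grD_L$, and using the $W_L$-invariance of $\Psi$ one gets $u^{-1}(R) = s_{u^{-1}(\gra)}\bigl(u^{-1}(R')\bigr) \subset \Psi$, so that $u$ itself contradicts the minimality of $w$. This combinatorial lemma, resting on the $W_L$-stability of $\Psi$ and the standard description of $W^P$, is the crux of the whole argument; once it is in place the induction runs without further obstruction.
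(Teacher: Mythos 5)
Your proof is correct, and its core --- the induction on $\ell(w)$ showing $\dim(Bw\tilde e_S)=L(\grs_{w(\wh S)})$, with base case \cite[Theorem 5.3]{GMMP} and inductive step via an external descent $\gra$ that Proposition \ref{prop:discese esterne} forces to be a complex descent of $\grs_{w(\wh S)}$ --- is exactly the paper's argument. You deviate in two places. First, for the equality $\dim(Bw\tilde e_S)=\dim(Bwe_S)$ the paper does not argue on the closed orbit directly: it notes that for $S$ maximal the claim follows from birationality of $\phi$, and for $S$ non-maximal it replaces $\phi$ by the resolution $G\times_{P_i}\goa_i\to\overline{Ge_i}$ of the smaller orbit closure, over whose open stratum the admissible pair $(v,R)$ with $v(R)=w(S)$ has a single orbit as preimage. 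Your alternative --- the unique closed $B$-orbit in $\phi^{-1}(Bwe_S)$ meets each fiber in the single point given by the closed Borel orbit of the flag variety $\phi^{-1}(e_{w(S)})$, hence maps bijectively onto $Bwe_S$ --- is legitimate, fully supported by Corollaries \ref{cor:ammissibile} and \ref{cor:bruhat-fibre}, and arguably more uniform since it avoids switching resolutions. Second, the paper dismisses the stability of admissibility under $(w,S)\mapsto(s_\gra w,S)$ with the phrase ``this easily implies''; your two-case verification (either $s_\gra u\in W^P$, contradicting minimality of $w$ directly, or $s_\gra u=us_\grb$ with $\grb=u^{-1}(\gra)\in\grD_L$, where the $W_L$-stability of $\Psi$ shows $u$ itself already contradicts minimality) is precisely the omitted argument and is correct. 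The only cosmetic caveat is that the flag-variety structure of the fiber over $e_{w(S)}$ is governed by the $W^{P_i}$-component of the point rather than by $w\in W^P$ itself, but this does not affect your conclusion.
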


\begin{proof}
We show the claim by induction on $\ell(w)$. The case $\ell(w) = 0$ follows from \cite[Theorem 5.3]{GMMP}. Assume that $\ell(w) > 0$ and let $\gra \in \grD$ be such that $s_\gra w < w$. Then $s_\gra w \in W^P$, and by Lemma \ref{lemma:discese} the orbit $B w \tilde e_S$ is dense inside $P_\gra w \tilde e_S$. Notice that $\dim(Bs_\gra w \tilde e_S)  = \dim(B w \tilde e_S) -1$ by \eqref{dimorbitasopra}.
Moreover by Proposition \ref{prop:discese esterne} we see that $\gra$ is a complex descent for $\grs_{w(\wh S)}$, thus $L(\grs_{s_\gra w(\wh S)}) = L(\grs_{w(\wh S)}) - 1$. 

By Corollary \ref{cor:ammissibile} we see that $w$ has minimal length among the elements in $W^P$ whose inverse moves $w(S)$ inside $\Psi$. This easily implies that $s_\gra w$ also has minimal length among the elements in $W^P$ whose inverse moves $s_\gra w(S)$ inside $\Psi$, hence by Corollary \ref{cor:ammissibile} again we see that the pair $(s_\gra w, S)$ is also admissible. Therefore by the inductive hypothesis we get
$$
	\dim(Bw \tilde e_S)  = \dim(Bs_\gra w \tilde e_S) +1 = L(\grs_{s_\gra w(\wh S)}) +1 =  L(\grs_{w(\wh S)}).
$$

It only remains to show that $\dim(Bw e_S)  = L(\grs_{w(\wh S)})$. If $S$ is a maximal orthogonal subset, this follows from the fact that $\phi$ is birational. If this is not the case, let $i \leq r$ be such that $e_S \in G e_i$ and let $(v,R) \in W^{P_i} \times \Ort(\Psi_i)$ be the unique admissible pair such that $v(R) = w(S)$. Then we can replace $\phi$ with the resolution $G \times_{P_i} \goa_i \rightarrow \overline{Ge_i}$, and since it is birational we obtain
$$
	\dim(Bwe_S) = \dim(Bv \tilde e_R) = L(\grs_{v(\wh R)}).
$$
On the other hand $v(R) = w(S)$, and the claim follows.
\end{proof}

The dimension formula in Theorem \ref{teo:dimension} only depends on the orthogonal subset $w(S)$, thus we can also rephrase it without referring to the admissible pair $(w,S)$. 

\begin{corollary}	\label{cor:dim-formula}
Let $S \subset \Phi$ be strongly orthogonal and suppose that $\height(e_S) = 2$, then $\dim (B e_S) = L(\grs_{\wh S})$.
\end{corollary}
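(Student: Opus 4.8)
The plan is to deduce the statement directly from Theorem \ref{teo:dimension}, whose only apparent limitation is that it concerns orthogonal subsets $S' \in \Ort(\Psi)$ attached to a \emph{fixed} nilpotent orbit closure $X = \overline{Ge}$, together with an admissible pair representing the corresponding $B$-orbit. So the first step is to manufacture such an orbit closure from the given datum. Given $S \subset \Phi$ strongly orthogonal with $\height(e_S) = 2$, I would set $e = e_S$ and $X = \overline{Ge_S}$; since $\height(e_S) = 2$ this is the closure of a height $2$ nilpotent orbit, so all the structure of Section 3 (the dominant characteristic $h$, the parabolic $P$, the abelian ideal $\goa = \gog(2)$ and its root set $\Psi$, together with $r = \rk(\goa)$) becomes available.

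Next I would locate $S$ inside the combinatorial parametrization of the $B$-orbits of $X$. Since $e_S \in X$, the orbit $Be_S$ is one of the $B$-orbits of $X$, so by Proposition \ref{prop:B-orbits-in-X} it equals $Be_R$ for a unique $R \in \Ort(X)$; the injectivity established in Proposition \ref{prop:strongly-orth-uniqueness} then forces $S = R$, whence $S \in \Ort(X)$. By Corollary \ref{cor:ammissibile} there is a unique $w \in W^P$ of minimal length with $w^{-1}(S) \subset \Psi$, and, setting $S' = w^{-1}(S)$, the pair $(w,S')$ is the unique admissible pair representing the $B$-orbit $Be_S$, so that $Bwe_{S'} = Be_{w(S')} = Be_S$.

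Finally I would apply Theorem \ref{teo:dimension} to this admissible pair, obtaining $\dim(Be_S) = \dim(Bwe_{S'}) = L(\grs_{w(\wh{S'})})$, and then check that the affine involution is the expected one. Since $w \in W$ fixes the imaginary root $\grd$, we have $w(\wh{S'}) = \{w(\gra) - \grd \mid \gra \in S'\} = \{\grb - \grd \mid \grb \in w(S')\} = \wh S$, hence $\grs_{w(\wh{S'})} = \grs_{\wh S}$ and the desired formula $\dim(Be_S) = L(\grs_{\wh S})$ follows. The whole argument is essentially bookkeeping that reduces the ``absolute'' statement of the corollary to the ``relative'' statement of the theorem; the only points requiring a little care are the verification that $S \in \Ort(X)$ (so that Theorem \ref{teo:dimension} genuinely applies to the orbit closure $X = \overline{Ge_S}$) and the identity $w(\wh{S'}) = \wh S$, which is exactly what makes the right-hand side independent of the chosen admissible pair.
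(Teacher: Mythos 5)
Your proposal is correct and is essentially the paper's own argument: the paper derives Corollary \ref{cor:dim-formula} by observing that the formula in Theorem \ref{teo:dimension} depends only on the orthogonal set $w(S)$, and you have simply spelled out the bookkeeping (that $S \in \Ort(X)$ for $X = \overline{Ge_S}$, the existence of the admissible pair via Corollary \ref{cor:ammissibile}, and the identity $w(\wh{S'}) = \wh S$ because $w$ fixes $\grd$). All of these verifications check out against Propositions \ref{prop:B-orbits-in-X} and \ref{prop:strongly-orth-uniqueness}, so nothing is missing.
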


Let $(w,S) \in W^P \times \mathrm{Ort}(\Psi)$ and let $\gra$ be a descent for $(w,S)$, and set $\grb = w^{-1}(\gra)$. Then we define
$$
	\mathcal F_\gra(w,S) = \left\{
	\begin{array}{ll}
	\big(s_\gra w, S \big) & \text{ if $\gra$ is external,}\\
		\big(w, S_\grb \big) & \text{ if $\gra$ is internal,}
	\end{array}
\right.
	$$
where, if $\gra$ is internal and complex, we have set $S_\grb = s_\grb(S)$. If instead $\gra$ is internal and real, then $S_\grb \subset \Psi$ is  the subset defined in \eqref{sb}.

\begin{remark}	\label{oss:figli}
Notice that
$$
	L(\calF_\gra (w, S)) = L(w,S) -1:
$$
indeed $\calF_\gra (w, S)$ corresponds to a $B$-orbit in $P_\gra w \tilde e_S \setminus Bw \tilde e_S $, and every such an orbit has codimension one in $P_\gra w \tilde e_S$. If moreover $\gra$ is a descent for $\grs_{w(\wh S)}$, then by Propositions \ref{prop:discese interne} and \ref{prop:discese esterne} we have
$$
	\grs(\calF_\gra (w, S)) = s_\gra \circ \grs_{w(\wh S)}.
$$
In particular, if $(w,S)$ is admissible, then $\calF_\gra (w, S)$ is admissible as well.
\end{remark}

\begin{theorem}	\label{teo:bruhat1}
Let $(v,R), (w,S) \in W^P \times \Ort(\Psi)$. Then $B v \tilde e_R \subset \overline {B w \tilde e_S}$ if and only if $v \leq w$ and $\grs_{v(\wh R)} \leq \grs_{w(\wh S)}$.
\end{theorem}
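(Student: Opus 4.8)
The ``only if'' implication is precisely Proposition \ref{prop:implicazione-facile-sopra}, so the plan is to prove the converse. For brevity write $(v,R)\preceq(w,S)$ when $v\le w$ and $\grs_{v(\wh R)}\le\grs_{w(\wh S)}$; we must show that $(v,R)\preceq(w,S)$ implies $Bv\tilde e_R\subseteq\overline{Bw\tilde e_S}$. I would argue by induction on $L(w,S)=\dim(Bw\tilde e_S)$, computed in \eqref{dimorbitasopra}. If $L(w,S)=0$ then $(w,S)=(\mathrm{id},\vuoto)$ and the hypothesis forces $(v,R)=(w,S)$, so assume $L(w,S)>0$ and $(v,R)\neq(w,S)$. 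Then $(w,S)$ admits a descent $\gra\in\grD$: an external one when $w\neq\mathrm{id}$, and an internal one otherwise, since in that case $\grs_{\wh S}\neq\mathrm{id}$ has a descent. By Lemma \ref{lemma:discese} the closure $\overline{Bw\tilde e_S}$ is $P_\gra$-stable, and setting $(w^-,S^-)=\calF_\gra(w,S)$ the orbit $Bw\tilde e_S$ is dense in $P_\gra w\tilde e_S=Bw\tilde e_S\sqcup Bw^-\tilde e_{S^-}$ (Remark \ref{oss:figli} and \cite[Lemma 6]{brion}); consequently $\overline{Bw\tilde e_S}=P_\gra\,\overline{Bw^-\tilde e_{S^-}}$.

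The heart of the argument is the following combinatorial claim: if $\gra$ is a descent for $(w,S)$ and $(v,R)\preceq(w,S)$, then either (a) $(v,R)\preceq(w^-,S^-)$, or (b) $\gra$ is a descent for $(v,R)$ and $\calF_\gra(v,R)\preceq(w^-,S^-)$. Granting this, the induction closes at once. In case (a), since $L(w^-,S^-)=L(w,S)-1$, the inductive hypothesis gives $Bv\tilde e_R\subseteq\overline{Bw^-\tilde e_{S^-}}\subseteq\overline{Bw\tilde e_S}$. In case (b), $Bv\tilde e_R$ is dense in $P_\gra v\tilde e_R$ with boundary orbit $B\,\calF_\gra(v,R)$, and the inductive hypothesis applied to $\calF_\gra(v,R)\preceq(w^-,S^-)$ yields $B\,\calF_\gra(v,R)\subseteq\overline{Bw^-\tilde e_{S^-}}$; saturating by $P_\gra$ and using that $\overline{Bw\tilde e_S}$ is $P_\gra$-stable gives $Bv\tilde e_R\subseteq P_\gra\,\overline{B\,\calF_\gra(v,R)}\subseteq P_\gra\,\overline{Bw^-\tilde e_{S^-}}=\overline{Bw\tilde e_S}$.

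To prove the claim I would split according to whether $\gra$ is a descent for $(v,R)$, aiming for alternative (b) when it is and for (a) when it is not, and then verify the two defining inequalities of $\preceq$ separately. The inequalities between the $W^P$-components (among $v,s_\gra v$ and $w,s_\gra w$, according to the internal/external type of $\gra$ for $v$ and for $w$) are exactly the content of the lifting property Lemma \ref{lemma:par1}, while the inequalities between the affine involutions follow from Lemma \ref{lemma:par2}, once one records how $\calF_\gra$ transforms $\grs_{v(\wh R)}$ and $\grs_{w(\wh S)}$. Here Remark \ref{oss:figli} supplies $\grs(\calF_\gra(u,T))=s_\gra\circ\grs_{u(\wh T)}$ whenever $\gra$ is a descent for $\grs_{u(\wh T)}$; the remaining possibility, namely an external $\gra$ with $u(T)\cup\{\gra\}$ orthogonal, is governed by Proposition \ref{prop:discese esterne}(i), which guarantees that an external descent is never a \emph{real} descent of $\grs_{u(\wh T)}$, so that $\grs_{s_\gra v(\wh R)}$ equals either $s_\gra\circ\grs_{v(\wh R)}$ or $\grs_{v(\wh R)}$ itself. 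In each situation the required involution inequality reduces to one of the three cases of Lemma \ref{lemma:par2}, after using Lemma \ref{lemma:discese-CR} to translate the descent/ascent dichotomy into the length statements $s_\gra\circ\grs<\grs$ or $s_\gra\circ\grs>\grs$.

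The main obstacle I anticipate is precisely the bookkeeping in this claim: one must ensure that the \emph{same} alternative (a) or (b) is simultaneously compatible with the $W^P$-component and with the involution component. The delicate configurations are the external descents of $(w,S)$, where $w^-=s_\gra w$ alters both components at once; there one combines Lemma \ref{lemma:par1}(iii) (to place $v$ below $s_\gra w$) with Lemma \ref{lemma:par2}(iii) (to place $\grs_{v(\wh R)}$ below $s_\gra\circ\grs_{w(\wh S)}$), whereas in the internal case one has $w^-=w$ and only Lemma \ref{lemma:par2}(ii)--(iii) intervene, using Proposition \ref{prop:discese interne} to identify $\calF_\gra(v,R)$. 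Once the claim is checked in each of these finitely many cases, the theorem follows.
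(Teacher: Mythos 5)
Your overall scheme for the inductive step coincides with the paper's: pick a descent $\gra\in\grD$ of $(w,S)$, use Lemma \ref{lemma:discese} to make $\overline{Bw\tilde e_S}$ $P_\gra$-stable, and run the three-way case analysis on $(v,R)$ via Lemmas \ref{lemma:par1} and \ref{lemma:par2} together with Propositions \ref{prop:discese interne} and \ref{prop:discese esterne}. But there is a genuine gap at the bottom of your induction. You induct on $L(w,S)$ down to $(\mathrm{id},\vuoto)$ and assert that when $w=\mathrm{id}$ and $S\neq\vuoto$ the pair still has a descent in $\grD$ ``since $\grs_{\wh S}\neq\mathrm{id}$ has a descent.'' That is false: a nontrivial involution in $\wh W$ always has a descent in $\wh\grD=\grD\cup\{\grd-\theta\}$, but its only descent may be the affine simple root $\gra_0=\grd-\theta$, which lies outside $\grD$ and has no associated minimal parabolic subgroup of $G$ acting on $\wt X$. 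The simplest instance is $S=\{\theta\}$: one checks $\grs_{\wh S}(\gra)=\gra+\langle\gra,\theta^\vee\rangle(\grd-\theta)$ is a positive affine root for every $\gra\in\grD$, so $(\mathrm{id},\{\theta\})$ has $L=1>0$ and no descent in $\grD$, and your induction stalls there. More seriously, the same happens at the minimal $B$-orbit inside each $L$-orbit of $\goa$, where the statement to be proved (that every $(\mathrm{id},R)$ with $\grs_{\wh R}\leq\grs_{\wh S}$ gives an orbit in the closure) is nontrivial and cannot be reached by descents in $\grD$ alone.

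The paper avoids this by inducting on $\ell(w)$ only, always choosing an \emph{external} descent, and taking as base case the whole stratum $\ell(w)=0$, i.e.\ the abelian-ideal case, which is quoted from \cite[Theorem 6.3]{GMMP}; that earlier result is precisely where the missing affine direction $\gra_0$ is dealt with by other means. Your argument becomes correct if you replace your base case by the same appeal to \cite[Theorem 6.3]{GMMP} whenever $w=\mathrm{id}$ (equivalently, whenever $(w,S)$ has no external descent); the external-descent bookkeeping you describe is then essentially identical to the paper's Cases 1--3.
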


\begin{proof}
One implication has already been proved in Proposition \ref{prop:implicazione-facile-sopra}. Thus we only have to show that, if $v \leq w$ and $\grs_{v(\wh R)} \leq \grs_{w(\wh S)}$, then $Bv \tilde e_R \subset \overline{Bw \tilde e_S}$ as well.

We proceed by induction on $\ell(w)$. If $\ell(w) = 0$, then $\ell(v) = 0$ as well and the claim follows from \cite[Theorem 6.3]{GMMP}. 

Suppose that $\ell(w) > 0$. Let $\gra \in \grD$ be a descent for $w$ and consider the pair $(s_\gra w, S)$: then by Lemma \ref{lemma:discese} we get
$$B s_\gra w \tilde e_S \subset P_\gra w \tilde e_S \subset \ol{B w \tilde e_S}.$$
On the other hand, by Proposition \ref{prop:discese esterne} we see that either $\gra$ is a complex descent for $\grs_{w(\wh S)}$, or $s_\gra$ and $\grs_{w(\wh S)}$ commute. We distinguish three possible cases, according whether  $\gra$ is a descent for 
$(v,R)$ or not, and its type as a descent.

\textit{Case 1}. Suppose that $s_\gra v < v$, and consider the pair $(s_\gra v,R) = \calF_\gra(v,R)$. Then by Lemma \ref{lemma:discese} we have
$$B v \tilde e_R \subset P_\gra s_\gra v \tilde e_R \subset \ol{B v \tilde e_R}.$$
Notice that $s_\gra v \leq s_\gra w$ and $\grs_{s_\gra v(\wh R)} \leq \grs_{s_\gra w(\wh S)}$. Thus $B s_\gra v \tilde e_R\subset \overline{B s_\gra w \tilde e_S}$ by the inductive assumption, and we get
$$
	B v \tilde e_R \subset P_\gra s_\gra v \tilde e_R \subset \overline{P_\gra s_\gra w \tilde e_S} \subset \overline{Bw \tilde e_S}.
$$

\textit{Case 2}. Suppose that $\gra$ is an internal descent for $(v,R)$. Then $s_\gra v > v$ and $\grb=v^{-1}(\gra) \in \grD_L$. Consider the pair $(v,R_\grb) =  \calF_\gra(v,R)$; then
$$B v \tilde e_R \subset P_\gra v \tilde e_{R_\grb} \subset \ol{B v \tilde e_R}.$$
Notice that $v \leq s_\gra w$ and $\grs_{v(\wh R_\grb)} \leq \grs_{s_\gra w(\wh S)}$. Thus the inductive assumption yields
$$
	B v \tilde e_R \subset P_\gra v \tilde e_{R_\grb} \subset \overline{P_\gra s_\gra w \tilde e_S} \subset \overline{Bw \tilde e_S}.
$$

\textit{Case 3}. Suppose finally that $\gra$ is not a descent for $(v,R)$. Then $s_\gra v > v$, and thanks to Lemmas \ref{lemma:par1} and  \ref{lemma:par2} we have $v \leq s_\gra w$ and $\grs_{v(\wh R)} \leq \grs_{s_\gra w (\wh S)}$. Therefore by the inductive hypothesis we get
\[
	Bv \tilde e_R \subset \overline{B s_\gra w \tilde e_S} \subset \overline{B w \tilde e_S}.	\qedhere
\]
\end{proof}

\section{The Bruhat order on $\calN_2$.}

We now characterize the Bruhat order among the $B$-orbits in $\calN_2$. Notice that by Proposition \ref{prop:B-orbits-in-X} we have the decomposition
$$
	\calN_2 = \bigcup_{\height(S) \leq 2} Be_S
$$

Suppose that $S \subset \Phi$ is strongly orthogonal with $\height(e_S) = 2$. If $\gra$ is a real descent for $\grs_{\wh S}$, it follows by Proposition \ref{prop:discese interne} that there are uniquely determined $\grg_1, \grg_2 \in S$ such that $\gra = \tfrac{1}{2}(\grg_1-\grg_2)$: we denote in this case
$$
	S_\gra = \big(S \setminus \{\grg_1, \grg_2\}\big) \cup \{\grg_2+\gra\}.
$$
Thus, if $\gra$ is a descent for $\grs_{\wh S}$, we define
$$
	\calF_\gra(S) = \left\{
	\begin{array}{cl}
		s_\gra (S) & \text{ if $\gra$ is complex for $\grs_{\wh S}$}, \\
		S_\gra & \text{ if $\gra$ is real for $\grs_{\wh S}$.}
	\end{array} \right.
$$

\begin{lemma}	\label{lemma:discese-sotto}
Let $S \subset \Phi$ be strongly orthogonal with $\height(e_S) = 2$ and let $\gra \in \grD$ be a descent for $\grs_{\wh S}$, then $\overline{Be_S}$ is $P_\gra$-stable. If moreover $S' = \calF_\gra(S)$, then $Be_{S'} \subset P_\gra e_S \subset  \overline{B e_S}$ and $\grs_{\wh S'} = s_\gra \circ \grs_{\wh S}$.
\end{lemma}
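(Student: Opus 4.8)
\emph{The plan is to} realize the orbit $Be_S$ inside the closure of a height-$2$ nilpotent $G$-orbit and to deduce everything by pushing down, along the resolution $\phi$, the analogous statements already established on $\wt X$ in Lemma \ref{lemma:discese} and Propositions \ref{prop:discese interne} and \ref{prop:discese esterne}. Concretely, since $\height(e_S) = 2$ the orbit closure $X = \ol{Ge_S}$ is contained in $\calN_2$, so we may run the whole apparatus of Section 3 relative to $X$: we obtain the abelian ideal $\goa$, its root set $\Psi$, the standard parabolic $P$, and the resolution $\phi : \wt X = G \times_P \goa \to X$. By Proposition \ref{prop:B-orbits-in-X} we have $S \in \Ort(X)$, and by Corollary \ref{cor:ammissibile} there is a unique admissible pair $(w, S_0) \in W^P \times \Ort(\Psi)$ with $w(S_0) = S$; in particular $\grs_{\wh S} = \grs_{w(\wh S_0)}$, so that $\gra$, being a descent for $\grs_{\wh S}$, is a descent for the pair $(w, S_0)$ in the sense of Section 4.

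\emph{First I would} prove the $P_\gra$-stability. By Lemma \ref{lemma:discese} the closure $\ol{Bw\wt e_{S_0}}$ is $P_\gra$-stable. As $\phi$ is projective, hence proper, and $G$-equivariant, it is $P_\gra$-equivariant and maps closed sets to closed sets; since $\phi(Bw\wt e_{S_0}) = Be_S$, continuity and properness give $\phi(\ol{Bw\wt e_{S_0}}) = \ol{Be_S}$, so $\ol{Be_S}$ is $P_\gra$-stable. In particular $P_\gra e_S \subset \ol{Be_S}$.

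\emph{Next} I would match the subset-level operation $\calF_\gra(S)$ with the pair-level operation $\calF_\gra(w, S_0)$ of Remark \ref{oss:figli}, and push it down. Write $\grb = w^{-1}(\gra)$. If $\gra$ is external ($s_\gra w < w$), then by Proposition \ref{prop:discese esterne}(i) $\gra$ is complex for $\grs_{\wh S}$, so $\calF_\gra(S) = s_\gra(S) = s_\gra w(S_0)$, which is the image under $\phi$ of the orbit of $\calF_\gra(w, S_0) = (s_\gra w, S_0)$. If $\gra$ is internal and complex, then $\grb \in \grD_L$ and, using $s_\gra w = w s_\grb$, one has $\calF_\gra(S) = s_\gra(S) = w s_\grb(S_0)$. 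If $\gra$ is internal and real, then $\grb = \tfrac12(\grg_1 - \grg_2)$ with $\grg_1, \grg_2 \in S_0$ by Proposition \ref{prop:discese interne}(ii), and $w(\grg_1), w(\grg_2)$ are precisely the two roots of $S$ attached to $\gra$, so that $\calF_\gra(S) = S_\gra = w\big((S_0)_\grb\big)$. In all three cases $\calF_\gra(S)$ is the $w$-translate of the second component of $\calF_\gra(w, S_0)$; hence by Propositions \ref{prop:discese interne} and \ref{prop:discese esterne} the orbit of $\calF_\gra(w, S_0)$ lies in $P_\gra w\wt e_{S_0}$ and maps under $\phi$ onto $Be_{S'}$. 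Since $\phi$ is $P_\gra$-equivariant and $\phi(w\wt e_{S_0}) \in Te_S$ with $T \subset P_\gra$, we have $\phi(P_\gra w\wt e_{S_0}) = P_\gra e_S$, and therefore $Be_{S'} \subset P_\gra e_S$. Finally, $\grs_{\wh S'} = \grs(\calF_\gra(w, S_0)) = s_\gra \circ \grs_{w(\wh S_0)} = s_\gra \circ \grs_{\wh S}$ by Remark \ref{oss:figli}.

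\emph{The main obstacle} is the bookkeeping of the third paragraph: one must verify that the operation $\calF_\gra$ defined directly on the strongly orthogonal subset $S$ coincides with the $w$-conjugate of the operation $\calF_\gra$ defined on the admissible pair $(w, S_0)$, and that the complex/real dichotomy for the descent $\gra$ of $\grs_{\wh S}$ is compatible with the internal/external dichotomy at the level of the pair. The point that makes this work is that, thanks to $\height(e_S) = 2$, an external descent is forced to be complex by Proposition \ref{prop:discese esterne}(i); consequently a real descent is always internal, and the real modification of Proposition \ref{prop:discese interne}(ii) transports correctly under $w$.
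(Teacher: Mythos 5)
Your proposal is correct and follows essentially the same route as the paper: reduce to the resolution $\wt X = G\times_P\goa$ of $X=\ol{Ge_S}$ via the admissible pair $(w,S_0)$ with $w(S_0)=S$, obtain $P_\gra$-stability upstairs from Lemma \ref{lemma:discese} and push it down using that $\phi$ is proper, and identify $\calF_\gra(S)$ with the $w$-translate of the second component of $\calF_\gra(w,S_0)$ so that the last claim follows from Remark \ref{oss:figli}. The paper compresses your third paragraph into one sentence, but the case-by-case bookkeeping you spell out is exactly what that sentence relies on.
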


\begin{proof}
Denote $X = \overline{Ge}$ and let $h$ be the dominant characteristic of $Ge_S$. Let moreover $P$ be the standard parabolic subgroup of $G$ defined by $h$, let $\goa \subset \gop^u$ be the associated abelian ideal of $\gob$ and let $\Psi \subset \Phi^+$ be the set of roots occurring in $\goa$. 

Let $(w,R) \in W^P \times \Ort(\Psi)$ be the admissible pair such that $S = w(R)$. Then by Proposition \ref{prop:discese esterne} the root $\gra$ is a descent for $\grs_{\wh S} = \grs_{w(\wh R)}$ if and only if it is a descent for $(w,R)$. Setting $\wt X = G \times_P \goa$, we see by Lemma \ref{lemma:discese} that $\overline{Bw \tilde e_R} \subset \wt X$ is $P_\gra$-stable. As the resolution $\phi : \wt X \rightarrow X$ is closed, it follows that $\overline{Be_S} = \phi(\overline{Bw \tilde e_R})$ is also $P_\gra$-stable.

The last claim follows from Remark \ref{oss:figli}, since  $S'$ is the image of $\calF_\gra(w,R)$ via the natural contraction $(w,S) \mapsto w(S)$.
\end{proof}

\begin{theorem}	\label{teo:bruhat3}
Let $R,S \subset \Phi$ be strongly orthogonal  and suppose that $\height(e_R) = \height(e_S) = 2$. Then $B e_R \subset \overline {B e_S}$ if and only if $\grs_{\wh R} \leq \grs_{\wh S}$.
\end{theorem}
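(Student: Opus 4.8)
The plan is to treat the two implications separately. The forward implication needs no hypothesis on the height: by Proposition \ref{prop:bruhat-involuzioni}, $Be_R \subset \overline{Be_S}$ already forces $\grs_{\wh R} \le \grs_{\wh S}$. All the work therefore goes into the converse, where the assumption $\height(e_R)=\height(e_S)=2$ is what makes the statement true. I would prove it by induction on $L(\grs_{\wh S})$, which by Corollary \ref{cor:dim-formula} is exactly $\dim(Be_S)$, establishing the slightly more flexible statement for all strongly orthogonal $R,S$ with $e_R,e_S\in\calN_2$. If $\grs_{\wh R}=\grs_{\wh S}$ then $R=S$ by the injectivity of Corollary \ref{cor:iniettivita-involuzioni} (which rests on the height hypothesis), so I may assume $\grs_{\wh R}<\grs_{\wh S}$; in particular $S\neq\vuoto$.

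The induction is powered by the minimal parabolics through Lemma \ref{lemma:discese-sotto}, so the decisive point is whether $\grs_{\wh S}$ has a descent among the \emph{finite} simple roots $\grD$, rather than only the affine one $\grd-\theta$. Using $\grs_{\wh S}(\gra)=\grs_S(\gra)+\gra(h_S)\grd$ for $\gra\in\got^*$, I note that if some $\gra\in\grD$ had $\gra(h_S)<0$ then $\grs_{\wh S}(\gra)$ would have negative $\grd$-coefficient, hence be a negative root, making $\gra$ a descent. Thus, if $\grs_{\wh S}$ has no descent in $\grD$, then $h_S$ is dominant and so is the dominant characteristic of $Ge_S$; consequently $S\subset\Psi_S:=\Phi(2,h_S)$, the roots of the abelian ideal $\goa_S=\gog(2,h_S)$. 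This is the base case: since $s_{\grd-\grg}\le\grs_{\wh R}\le\grs_{\wh S}$ for every $\grg\in R$, Proposition \ref{prop:bruhat-involuzioni2} gives $\grg\in\Psi_S$, so $R\subset\Psi_S$ as well, and the conclusion follows from Theorem \ref{teo:bruhat1} applied to the pairs $(\mathrm{id},R),(\mathrm{id},S)$ --- equivalently, from the abelian ideal case of \cite{GMMP}.

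For the inductive step I take a descent $\gra\in\grD$ of $\grs_{\wh S}$. By Lemma \ref{lemma:discese-sotto} the closure $\overline{Be_S}$ is $P_\gra$-stable and $Be_{S'}\subset\overline{Be_S}$, where $S'=\calF_\gra(S)$ has $\grs_{\wh{S'}}=s_\gra\circ\grs_{\wh S}$ and $L(\grs_{\wh{S'}})=L(\grs_{\wh S})-1$. I then distinguish whether $\gra$ is a descent for $\grs_{\wh R}$. If not, Lemma \ref{lemma:par2}(iii) yields $\grs_{\wh R}\le s_\gra\circ\grs_{\wh S}=\grs_{\wh{S'}}$, and the inductive hypothesis gives $Be_R\subset\overline{Be_{S'}}\subset\overline{Be_S}$. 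If it is, put $R'=\calF_\gra(R)$; Lemma \ref{lemma:par2}(ii) gives $\grs_{\wh{R'}}<\grs_{\wh{S'}}$, whence $Be_{R'}\subset\overline{Be_{S'}}\subset\overline{Be_S}$ by induction. It then remains to lift from $R'$ to $R$: applying Lemma \ref{lemma:discese-sotto} to $R$ identifies $P_\gra e_{R'}=P_\gra e_R$, and since $\overline{Be_S}$ is $P_\gra$-stable and contains $e_{R'}$ it contains $P_\gra e_{R'}=P_\gra e_R\supset Be_R$.

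The step I expect to be the main obstacle is precisely this descent dichotomy. It is tempting to run the induction purely inside $\calN_2$ with finite parabolics, but a height-$2$ involution can fail to have any finite descent --- the minimal case $S=\{\theta\}$ has only the affine descent $\grd-\theta$ --- and one must recognize that this failure is governed exactly by the dominance of $h_S$, which funnels those configurations into the abelian ideal base case via Proposition \ref{prop:bruhat-involuzioni2}. Once this is isolated, the remainder is a faithful transcription of the Richardson--Springer propagation already used for Theorem \ref{teo:bruhat1}, simplified by the fact that the $W^P$-direction has been collapsed so that only the internal descents survive.
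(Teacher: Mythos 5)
Your proof is correct, and it reaches the same endgame as the paper (Lemma \ref{lemma:discese-sotto} plus Lemma \ref{lemma:par2} for the propagation, Proposition \ref{prop:bruhat-involuzioni2} plus the abelian-ideal theorem of \cite{GMMP} at the bottom), but it organizes the induction differently. The paper inducts on $\ell(w)$, where $(w,w^{-1}(S))$ is the admissible pair attached to $Be_S$, and in the inductive step uses only \emph{external} descents $s_\gra w<w$, which Proposition \ref{prop:discese esterne}~ii) guarantees are complex descents of $\grs_{\wh S}$; the base case $\ell(w)=0$ is by definition $S\subset\Psi$. You instead induct on $L(\grs_{\wh S})$ and allow \emph{any} finite descent of $\grs_{\wh S}$, including internal real ones, paying for this with the extra observation that the absence of a finite descent forces $h_S$ to be dominant (via $\grs_{\wh S}(\gra)=\grs_S(\gra)+\gra(h_S)\grd$) and hence $S\subset\Psi$. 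Your dichotomy is coarser but self-contained at the level of the involution, and it makes transparent why the induction must eventually fall into the abelian-ideal case; the paper's version keeps the two ``directions'' (Weyl group versus ideal) separated exactly as in Theorem \ref{teo:bruhat1}, at the cost of invoking the admissible-pair machinery inside the induction. Two small points you leave implicit, both harmless: the inequality $s_{\grd-\grg}\le\grs_{\wh R}$ for $\grg\in R$ (needed to invoke Proposition \ref{prop:bruhat-involuzioni2}) follows from $e_\grg\in\overline{Te_R}$ together with Proposition \ref{prop:bruhat-involuzioni}, and the paper's own base case uses the same unstated fact; and the inductive hypothesis applies to $S'=\calF_\gra(S)$ and $R'=\calF_\gra(R)$ because Lemma \ref{lemma:discese-sotto} places $e_{S'}$ in $P_\gra e_S\subset Ge_S$, so the height-$2$ hypothesis is preserved.
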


\begin{proof}
One implication follows from Proposition \ref{prop:bruhat-involuzioni}.
To prove the converse, we adapt the proof of Theorem \ref{teo:bruhat1}.

Denote $X = \overline{Ge_S}$ and let $h$ be the dominant characteristic of $Ge_S$. Let moreover $P$ be the standard parabolic subgroup defined by $h$, let $\goa \subset \gop^u$ be the associated abelian ideal of $\gob$ and let $\Psi \subset \Phi^+$ be the set of roots occurring in $\goa$. By Proposition \ref{prop:B-orbits-in-X}, there exists $w \in W^P$ such that $S\subset w(\Psi)$. Notice that $w$ is uniquely determined, since $e_S$ is in the open $G$-orbit of $X$. Equivalently, $(w, w^{-1}(S))$ is the admissible pair defined by $Be_S \subset Ge$. By induction on $\ell(w)$, we show that $Be_R \subset \overline{Be_S}$.

Suppose that $\ell(w) = 0$. Then $S \subset \Psi$, and by Proposition \ref{prop:bruhat-involuzioni2} we get $R \subset \Psi$ as well. In particular $e_R \in \goa$, therefore $Be_R \subset \overline{Be_S}$ by \cite[Theorem 5.3]{GMMP}.

Suppose now that $\ell(w) > 0$ and let $\gra \in \grD$ be such that $s_\gra w < w$.
Then $\gra$ is a complex descent for $\grs_{\wh S}$ by Proposition \ref{prop:discese esterne} ii),
and by Lemma \ref{lemma:discese-sotto} we have
$$B e_S \subset P_\gra e_{s_\gra(S)} \subset \ol{B e_S}.$$
Notice that the admissible pair associated to $s_\gra(S)$ is $(s_\gra w, w^{-1}(S))$.

Suppose first that $\gra$ is a descent for $\grs_{\wh R}$ and set $R' = \calF_\gra(R)$. Then by Lemma \ref{lemma:discese-sotto} we have $\grs_{\wh R'} = s_\gra \circ \grs_{\wh R}$ and 
$$B e_R \subset P_\gra e_{R'} \subset \ol{B e_R}.$$
Using Lemma \ref{lemma:par2} it is easy to see that $\grs_{\wh R'} \leq \grs_{s_\gra (\wh S)}$, thus we can apply the inductive assumption and we get
$$
	B e_R \subset P_\gra e_{R'} \subset \overline{P_\gra e_{s_\gra(S)}} \subset \overline{B e_S}.
$$

Suppose now that $\gra$ is not a descent for $\grs_{\wh R}$, then using Lemma \ref{lemma:par2} it is easy to see that $\grs_{\wh R} \leq \grs_{s_\gra (\wh S)}$. Thus by the inductive hypothesis we get
\[
	B e_R \subset \overline{B e_{s_\gra(S)}} \subset \overline{B e_S}.	\qedhere
\]
\end{proof}


\end{document}